\documentclass[11pt,reqno]{amsart}

\RequirePackage[dvipsnames,usenames]{color}

\usepackage{amssymb,amsthm,amsmath,amscd}
\usepackage[backref=page, colorlinks=true, linkcolor=magenta, citecolor=cyan]{hyperref}

\usepackage{mathtools}
\usepackage{soul}

\usepackage{quiver}

\input{kmacros3.sty}

\usepackage{tikz}
\usepackage{tikz-cd}
\usetikzlibrary{cd}
\usepackage{graphicx}
\usepackage[all,cmtip]{xy}
\usepackage{mathpazo}

\usepackage{mabliautoref}
\numberwithin{equation}{theorem}

\usepackage{bm}
\usepackage{pifont}
\usepackage{upgreek}

\usepackage{eucal}
\usepackage{ulem}
\usepackage{stmaryrd}

\usepackage{fullpage}

\usepackage{setspace}

\usepackage{enumerate}
\usepackage{calc}

\usepackage{verbatim}
\usepackage{alltt}

\DeclareMathOperator{\td}{tr. deg.}
\DeclareMathOperator{\Ass}{Ass}

\renewcommand{\O}{\mathcal O}

\begin{document}

\title{Frobenius splitting of valuation rings revisited}
\author{Rankeya Datta}
\email{rankeya.datta@missouri.edu}
\author{Karen E. Smith}
\email{kesmith@umich.edu}

\begin{abstract}
        Let $K$ be a function field over an $F$-finite field $k$ of characteristic $p > 0$ and let $\nu$ be a valuation of $K/k$. We show $\nu$ is an Abhyankar valuation of $K/k$ if and only if the corresponding valuation ring is Frobenius split. The proof proceeds by analyzing when ideals of a valuation ring are uniformly $F$-compatible and establishing a general ramification-theoretic characterization of Frobenius split valuation rings of $F$-finite fields. In addition, when the ground field $k$ is not $F$-finite, we show that the equivalence between Frobenius splitting and the Abhyankar property can fail by constructing an example of an excellent Frobenius split DVR of a function field whose corresponding valuation is not divisorial.  Our methods  do not rely on local uniformization results.
\end{abstract} 

\maketitle

\section{Introduction}

For any valuation ring $V$ of prime characteristic $p$, the Frobenius map is {\it pure}, meaning  that the $p$-th power map on $V$ remains injective after tensoring with any $V$-module \cite[Corollary~3.3]{DattaSmithValuations}. This basic observation  was the starting point of the authors' investigation of various algebraic notions of singularities defined using the Frobenius map, such as $F$-regularity and Frobenius splitting, in the highly non-Noetherian setting of valuation rings \cite{DattaSmithValuations,DattaSmithValuationsErratum}. For valuations of $F$-finite function fields, the following particularly pleasing picture emerged.

\begin{theorem}\cite{DattaSmithValuations, DattaSmithValuationsErratum}
    \label{thm:intro-F-finitness-valuation-rings-function-fields}
  Fix a ground field $k$ of characteristic $p>0$ and assume that $k$ is $F$-finite, i.e., that $k/k^p$ is a finite extension.   Let $K$ be a function field over $k$ and let $(V,\fram_V,\kappa_V)$ be a non-trivial valuation ring of $K/k$. Then the following are equivalent:
    \begin{enumerate}[(i)]
        \item $V$ is $F$-finite.
        \item $V$ is a free $V^p$-module of rank $[K:K^p]$.
        \item $p[\kappa_V \colon \kappa_V^p] = [K : K^p]$.
        \item $V$ is a Frobenius split DVR.
        \item $V$ is an excellent DVR.
        \item $V$ is the valuation ring of a divisorial valuation of $K/k$.
          \item $V$ is split $F$-regular (an adaptation of strong $F$-regularity to non-Noetherian and non-$F$-finite settings).
    \end{enumerate}
\end{theorem}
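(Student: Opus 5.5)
The proof runs as a cycle of implications, using three standard facts about function fields over an $F$-finite $k$. First, $[K:K^p]=p^{n}[k:k^p]$, where $n=\operatorname{tr.deg}_k K$. Second, for a valuation $\nu$ of $K/k$ with value group $\Gamma$, the extension $K^p\subseteq K$ is finite and $V^p$ is the valuation ring of $\nu|_{K^p}$. This gives the fundamental inequality
\[ e\,f\le [K:K^p], \qquad e=[\Gamma:p\Gamma],\quad f=[\kappa_V:\kappa_V^p]. \]
Third, Abhyankar's inequality $\operatorname{rat.rk}\Gamma+\operatorname{tr.deg}_k\kappa_V\le n$ holds, with $[\Gamma:p\Gamma]\le p^{\operatorname{rat.rk}\Gamma}$ for finitely generated $\Gamma$.

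\textbf{Step 1: (i)$\Leftrightarrow$(ii).} Since $V$ is a valuation ring, finitely generated torsion-free $V^p$-modules are free. Hence if $V$ is $F$-finite it is free over $V^p$, and its rank equals $[K:K^p]$ after tensoring with $K^p$. The converse is trivial.

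\textbf{Step 2: (ii)$\Rightarrow$(iii).} Freeness of rank $[K:K^p]$ forces equality $ef=[K:K^p]$ (defect $1$). One then argues that $\Gamma$ must be rank one discrete with $e=p$. I would try the following: a free basis of $V$ over $V^p$ reduces to a $\kappa_V^p$-basis of $V/\mathfrak m_{V^p}V$. This quotient has a filtration by the $\Gamma/p\Gamma$-graded pieces, and freeness forces $\mathfrak m_V^{[p]}$-type ideals to be finitely generated, so $\mathfrak m_V$ is principal up to $p$-th powers. Combined with Abhyankar's inequality and the formula for $[K:K^p]$, this should give $e=p$ and $f=p^{n-1}[k:k^p]$, which is (iii).

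\textbf{Step 3: (iii)$\Rightarrow$(vi).} From $f=[K:K^p]/p$ we get $\operatorname{tr.deg}_k\kappa_V\ge n-1$, since $[\kappa_V:\kappa_V^p]=p^{\operatorname{tr.deg}}[k:k^p]$ when $\kappa_V$ is finitely generated. More carefully, we use $[\kappa:\kappa^p]\le p^{\operatorname{tr.deg}_k\kappa}[k:k^p]$ for any subextension of a function field. Nontriviality and Abhyankar's inequality then give rank $1$, $\Gamma\cong\mathbb Z$, and residue transcendence degree $n-1$. This is exactly a divisorial valuation.

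\textbf{Step 4: remaining implications.} For (vi)$\Rightarrow$(v), a divisorial valuation ring is a localization of a finite-type $k$-algebra at a height-one prime, hence excellent. For (v)$\Rightarrow$(i), excellent DVRs have finite Frobenius over $F$-finite fields: an excellent DVR $V$ with $F$-finite fraction field is $F$-finite, by a result of Kunz-type (the generic fiber of the completion is reduced). For (i)+(ii)$\Rightarrow$(vii), a free $V^p$-module $V$ of finite rank with $V$ regular makes the trace-type projections produce splittings sending any chosen $c$ to $1$ after raising to $p^e$. I would show this via the explicit monomial basis coming from a uniformizer and a $p$-basis of the residue field, lifted to $V$. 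For (vii)$\Rightarrow$(iv)$\Rightarrow$(i): split $F$-regular implies Frobenius split. A Frobenius split valuation ring must have $\Gamma$ with no $p$-divisible convex pieces obstructing splitting, and in particular must be Noetherian: if $\mathfrak m_V=\mathfrak m_V^2$ then $\mathfrak m_V$ is $F$-compatible, and one can derive a contradiction with finiteness of $[K:K^p]$. Finally, an $F$-split DVR with $[K:K^p]$ finite is $F$-finite, since the splitting forces Japanese-ness / excellence.

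The main obstacle is the passage from Frobenius splitting (iv) back to $F$-finiteness, i.e. showing a Frobenius split DVR of $K/k$ has defect-free Frobenius extension $e f=[K:K^p]$. I would first attempt this by showing a splitting $\phi\colon F_*V\to V$ restricts on each graded piece $\mathfrak m^{i}/\mathfrak m^{i+1}$ so that $\dim_{\kappa^p}$ counts match, forcing $V$ to be finite over $V^p$.
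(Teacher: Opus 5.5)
Your Steps~1 and~3 and the implications (vi)$\Rightarrow$(v)$\Rightarrow$(i) are fine. The paper quotes this theorem from \cite{DattaSmithValuations, DattaSmithValuationsErratum} rather than reproving it, but its tools show where your cycle breaks. First, the claim in Step~4 that a Frobenius split valuation ring must be Noetherian is false. (Your reason was that $\fram_V=\fram_V^2$ would make $\fram_V$ $F$-compatible and contradict $[K:K^p]<\infty$.) By \autoref{thm:Frobenius-split-Abhyankar-valuations}, every Abhyankar valuation ring of $K/k$ is Frobenius split. An example is the monomial valuation of $k(x,y)$ with $\nu(x)=1$, $\nu(y)=\sqrt{2}$, where $\fram_V=\fram_V^2$. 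Uniform $F$-compatibility of such an $\fram_V$ (\autoref{prop:uniformly-F-compatible-valuation-rings}) produces no contradiction. Via \autoref{lem:rank-1-lemma} and \autoref{lem:field-theory} it produces exactly $[\Gamma:p\Gamma][\kappa_V:\kappa_V^p]=[K:K^p]$, which Abhyankar valuations satisfy. So your passage from (vii) to ``$V$ is a DVR'' fails, and it must use split $F$-regularity itself. Let $\frq$ be a proper uniformly $F$-compatible ideal: a nonzero non-maximal prime, or $\fram_V$ if it is not finitely generated. Then no $\phi\colon F^e_*V\to V$ sends $F^e_*c$ to $1$ for $0\neq c\in\frq$. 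Hence $\dim V=1$ and $\fram_V$ is principal.

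Second, (iv)$\Rightarrow$(i) is never proved. You flag it as the main obstacle, and ``the splitting forces Japanese-ness/excellence'' merely restates it; it is \autoref{thm:Frobenius-split-discrete-valuations}, or in general \cite[Corollary~7.4.3]{DattaMurayamaFsolidity}. Since (iv) enters your cycle only through this implication, (iv) is left unconnected. Your graded-piece idea cannot work as stated, because a splitting need not respect the $\fram_V$-adic filtration. For $V=\mathbf{F}_p[t]_{(t)}$, let $\pi_i$ be the coordinate of $t^i$ in the basis $1,t,\dots,t^{p-1}$ of $V$ over $V^p$; then $\pi_0+\pi_{p-1}$ is a Frobenius splitting sending $t^{p-1}\in\fram_V$ to $1$. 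One way to close the gap with the paper's lemmas, for a DVR $V$ with uniformizer $t$, is as follows. $\Hom_{V^p}(V,V^p)$ is a nonzero proper $V$-submodule of the one-dimensional $K$-space $\Hom_{K^p}(K,K^p)$, hence equals $V\cdot\Phi$ for some $\Phi$. Then $\tau\coloneqq\Phi(t^{p-1}\,\cdot\,)$ satisfies $\nu(\tau(x))\geq\nu(x)$. It also induces a nonzero map $\kappa_V\to\kappa_V^p$: otherwise $\Phi(t^{-1}V)\subseteq V^p$, i.e.\ $t^{-1}\Phi\in V\cdot\Phi$. The proof of \autoref{lem:field-theory} with $\tau$ in place of $\sigma$ then gives $[K:K^p]\le p[\kappa_V:\kappa_V^p]$. \autoref{lem:valuation-minimizer-basis} then shows that the $t^ib_j$ ($0\le i<p$, $b_j$ as there) form a $V^p$-basis of $V$.

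Finally, Step~2 is only heuristic, and filtering $V/\fram_{V^p}V$ by ``$\Gamma/p\Gamma$-graded pieces'' is the wrong count. The pieces are indexed by the initial-index set of \autoref{def:initial-index}. Counting by $[\Gamma:p\Gamma]$ would only recover defectlessness, which higher-rank Abhyankar valuations with principal $\fram_V$ also satisfy; this is precisely the conflation corrected in \cite{DattaSmithValuationsErratum}. What works is this:
\begin{itemize}
\item Freeness gives $\dim_{\kappa_V^p}V/\fram_V^{[p]}=[K:K^p]$.
\item If $\fram_V$ is not finitely generated, then $\fram_V^{[p]}=\fram_V$ by \autoref{lem:intersection-ideal-powers-valuation-ring}. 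This forces $[\kappa_V:\kappa_V^p]=p^n[k:k^p]$, which is impossible since $\td{\kappa_V/k}\le n-1$.
\item So $\fram_V=(t)$, and $\dim_{\kappa_V^p}V/t^pV=p[\kappa_V:\kappa_V^p]$ is (iii).
\end{itemize}
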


\autoref{thm:intro-F-finitness-valuation-rings-function-fields} raises the natural question as to when non-Noetherian valuation rings of $F$-finite function fields are Frobenius split.  The purpose of this paper is to answer this question with the following comprehensive result.

\begin{theorem} (see \autoref{thm:Frobenius-splitting-Abhyankar-equivalence})
\label{thm:intro-main-F-split-Abhyankar}
Let $K$ be a finitely generated field extension of an $F$-finite field $k$ of characteristic $p > 0$. Let $\nu$ be a valuation of $K/k$ with valuation ring $(\O_\nu,\fram_\nu,\kappa_\nu)$ and value group $\Gamma_\nu$. Let $\nu^p$ denote the restriction of $\nu$ to the subfield $K^p$ of $K$. Then the following are equivalent:
\begin{enumerate}[(i)]
\item $\O_\nu$ is Frobenius split.\label{thm:intro-main-F-split-Abhyankar.i}
\item $\nu$ is an Abhyankar valuation of $K/k$.\label{thm:intro-main-F-split-Abhyankar.ii}
\item $[\Gamma_\nu : p\Gamma_\nu][\kappa_\nu : \kappa_\nu^p] = [K : K^p]$.\label{thm:intro-main-F-split-Abhyankar.iii}
\item The extension $\nu/\nu^p$ has no ramification defect (\autoref{def:ramification-defect}). \label{thm:intro-main-F-split-Abhyankar.iv}
\end{enumerate}
\end{theorem}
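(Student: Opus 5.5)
We would organize the proof around condition (iii) as a hub. Throughout, write $e = [\Gamma_\nu : p\Gamma_\nu]$, $f = [\kappa_\nu:\kappa_\nu^p]$, and $n = \operatorname{tr.deg}(K/k)$. Since $k$ is $F$-finite, $[K:K^p] = p^n[k:k^p]$. Note that $\nu^p$ is the restriction of $\nu$ to $K^p$, and Frobenius gives an isomorphism of the valued field $(K,\nu)$ with $(K^p,\nu^p)$; hence $\nu^p$ has value group $p\Gamma_\nu$ and residue field $\kappa_\nu^p$. Consequently the ramification index and residue degree of the finite extension $\nu/\nu^p$ are exactly $e$ and $f$. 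The fundamental inequality for the finite extension $K/K^p$, together with the fact that $\nu$ is the unique extension of $\nu^p$ (the extension is purely inseparable), gives $[K:K^p] = e\,f\,d$, where $d$ is the defect, a power of $p$. Thus (iii)$\Leftrightarrow$(iv) is immediate from \autoref{def:ramification-defect}, granted that this definition agrees with $d=1$.

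\textbf{Step 1: (ii)$\Leftrightarrow$(iii).} Write $r = \dim_{\mathbb Q}\Gamma_\nu\otimes\mathbb Q$ and $s = \operatorname{tr.deg}(\kappa_\nu/k)$. Abhyankar's inequality says $r+s\le n$, with equality defining the Abhyankar property. For a finitely generated field extension $\kappa_\nu/k$ we have $f = p^{s}[k:k^p]$. For the value group, $\Gamma_\nu\subseteq\mathbb Q^r$ in general, and $\dim_{\mathbb F_p}\Gamma_\nu/p\Gamma_\nu\le r$; this is the standard bound for torsion-free groups of finite rational rank, and one can check it on finitely generated subgroups, since a $p$-independent set stays independent in $\mathbb Q^r$. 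So $e\le p^r$, and in the Abhyankar case $\Gamma_\nu\cong\mathbb Z^r$, so $e=p^r$. Then (ii) gives $ef = p^{r+s}[k:k^p] = [K:K^p]$. Conversely, if $ef=[K:K^p]$, then $p^n \le p^{r+s}$, and Abhyankar's inequality forces $r+s=n$.

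\textbf{Step 2: (i)$\Leftrightarrow$(iv).} This is the heart of the matter; we would aim for a general statement valid for any valuation ring of an $F$-finite field $K$. We expect (i) to be equivalent to $\nu/\nu^p$ being defectless, which by the above is equivalent to $ef=[K:K^p]$.

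For defectless$\Rightarrow$split, we would try to build a splitting directly. Choose $x_1,\dots,x_e\in K$ whose values represent the cosets of $\Gamma_\nu/p\Gamma_\nu$, and units $u_1,\dots,u_f$ whose residues form a $p$-basis of $\kappa_\nu$ over $\kappa_\nu^p$. Then the products $x_iu_j$ are valuation-independent over $K^p$, and there are $ef=[K:K^p]$ of them, so they form a basis of $K$ over $K^p$. Valuation independence then shows that $\nu$ of a sum equals the minimum of the $\nu$ of the terms. Normalizing so that $x_1u_1=1$, define $\phi$ as projection onto the coefficient of $1$. The minimum formula shows that $\phi$ sends $\mathcal O_\nu$ into $\mathcal O_\nu^p$, so $\phi$ gives a Frobenius splitting.

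For split$\Rightarrow$defectless, we would use the abstract's hint about uniformly $F$-compatible ideals. Given a splitting $\phi$, we would show that every ideal $I$ of $\mathcal O_\nu$ satisfies $\phi(I^{1/p})\subseteq I$ after suitable modification; for valuation ideals, compatibility follows from the valuation-theoretic structure. From this we would deduce that $\mathcal O_\nu^{1/p}$ contains a free $\mathcal O_\nu$-summand structure rich enough to force $\mathcal O_\nu^{1/p}$ to be built from a valuation-independent basis. Concretely, we would use $\phi$ and its premultiples to produce $[K:K^p]$ elements that are valuation-independent, which yields $d=1$.

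\textbf{Main obstacle.} We expect this last implication, split$\Rightarrow$no defect, to be the main difficulty. The ring is non-Noetherian and there is no local uniformization to fall back on, so the defect has to be detected purely from the module structure of $\mathcal O_\nu$ over $\mathcal O_\nu^p$. Our first attempt would be a contradiction argument. Assume $d>1$; then the standard analysis of defect extensions gives elements $z\in K$ whose distances to $K^p$ form a set of values with no maximum. We would then show that $\phi(z\cdot -)$ cannot be bounded consistently, which would contradict the assumption that $\phi$ is an $\mathcal O_\nu^p$-linear splitting of $\mathcal O_\nu$.
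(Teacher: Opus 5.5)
Your treatment of (iii)$\Leftrightarrow$(iv) and of defectless $\Rightarrow$ split agrees with the paper. There, a valuation-independent basis $\{a_ib_j\}$ of $K$ over $K^p$ (your $x_iu_j$) is normalized so that $a_1b_1=1$, and one projects onto the coefficient of $1$. Your direct proof of (ii)$\Leftrightarrow$(iii), which the paper only cites, is sound once you note one point. The bound $[\kappa_\nu:\kappa_\nu^p]\le p^{s}[k:k^p]$ holds without knowing that $\kappa_\nu/k$ is finitely generated: pass to a finitely generated subextension containing a transcendence basis and any given finite $\kappa_\nu^p$-independent set.

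The genuine gap is split $\Rightarrow$ defectless, which is the new content of the theorem and which you do not prove. Your first route rests on a false claim: ideals of $\mathcal{O}_\nu$ are not compatible with a splitting in general, even ``after modification''. An ideal $I$ compatible with a splitting $\phi$ is automatically radical, since $x^p\in I$ gives $x=\phi(F_*x^p)\in I$. So $(t^2)$ in a DVR is compatible with no splitting. Even $\mathfrak{m}_\nu$ fails when it is principal: on $\mathbf{F}_p[[t]]$, the splitting sending $F_*1$ and $F_*t$ to $1$, and $F_*t^i$ to $0$ for $2\le i\le p-1$, does not preserve $(t)$. What is true is that non-maximal primes are uniformly $F$-compatible. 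So is $\mathfrak{m}_\nu$ when it is not finitely generated, because then $\mathfrak{m}_\nu=\bigcap_e\mathfrak{m}_\nu^{[p^e]}$.

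The missing idea is how to exploit this.

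\emph{Non-finitely generated case.} Any splitting $\sigma$ satisfies $\nu(\sigma(x))\ge\nu(x)$; otherwise $u=x/\sigma(x)\in\mathfrak{m}_\nu$ but $\sigma(u)=1$. Hence its $K^p$-linear extension to $K$ is also valuation-nondecreasing. Then $\alpha\mapsto(\sigma(a_ib_j\alpha))_{i,j}$ is an injective $K^p$-linear map $K\to(K^p)^{\oplus ef}$, which gives $[K:K^p]\le ef$. Your best-approximation sketch would need exactly this nondecreasing property, since it is what makes $\sigma(z)$ a best approximation to $z$ from $K^p$. It would also need the unproved claim that a defect in the degree $p^m$ extension $K/K^p$ produces an element of $K$ with no best approximation in $K^p$ itself. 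The injective map sidesteps this.

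\emph{Principal case.} When $\mathfrak{m}_\nu$ is principal, a splitting need not be valuation-nondecreasing at all (as in the example above), and your proposal offers nothing for this case. The paper handles it by induction on $[\Gamma_\nu:p\Gamma_\nu]$. Set $\mathfrak{p}=\bigcap_n\mathfrak{m}_\nu^n$.
\begin{itemize}
\item The quotient $\mathcal{O}_\nu/\mathfrak{p}$ is a DVR with $F$-finite fraction field $\kappa(\mathfrak{p})$. It inherits a splitting because $\mathfrak{p}$ is uniformly compatible. The known DVR case then gives $p\,[\kappa_\nu:\kappa_\nu^p]=[\kappa(\mathfrak{p}):\kappa(\mathfrak{p})^p]$.
\item The localization $(\mathcal{O}_\nu)_{\mathfrak{p}}$ is split, and its value group $\Gamma'$ satisfies $[\Gamma':p\Gamma']=[\Gamma_\nu:p\Gamma_\nu]/p$ by purity of $\mathbf{Z}\hookrightarrow\Gamma_\nu$. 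Induction gives $[\Gamma':p\Gamma'][\kappa(\mathfrak{p}):\kappa(\mathfrak{p})^p]=[K:K^p]$.
\end{itemize}
Combining the two equalities yields $[\Gamma_\nu:p\Gamma_\nu][\kappa_\nu:\kappa_\nu^p]=[K:K^p]$.
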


Here,  $\nu$ being \emph{Abhyankar} means that 
\[\dim_{\mathbf Q}(\Gamma_\nu \otimes_{\mathbf Z} \mathbf Q) + \td{\kappa_\nu/k} = \td{K/k},\]
a property that holds for any {\it divisorial} valuation of $K/k$. Thus, Abhyankar valuations can be viewed  as higher (rational) rank analogs of  divisorial valuations. In particular, Abhyankar valuations satisfy many of the desirable finiteness properties of divisorial valuations. For example, their value groups are free abelian groups of finite rank and their residue fields are  finitely generated over the ground field. Moreover, while the outstanding problem of local uniformization of valuations remains open, Abhyankar valuations on function fields \emph{always} admit a local uniformization, i.e., they are centered on a {\it regular} point of a variety $X/k$ with function field $K$ \cite{KnafKuhlmannAbhyankar, CutkoskyAbhyankar}.

Abhyankar valuations are often called \emph{quasi-monomial valuations}, particularly in the characteristic zero setting and when the value group is a subgroup of the real numbers (see {\it e.g.} \cite{JonssonMustataValuations,XuQuasiMonomial}). In this paper,  we will consider Abhyankar valuations that are not necessarily real-valued and, of course, are in the prime characteristic setting. 

  \autoref{thm:intro-F-finitness-valuation-rings-function-fields} and \autoref{thm:intro-main-F-split-Abhyankar} can be viewed as saying that the well-studied notions of $F$-singularities for Noetherian rings, when applied to the setting of valuation rings of function fields, often imply strong finiteness properties. While split $F$-regularity is quite a stringent condition, what is perhaps more surprising (at least to the authors) is that the much weaker property of Frobenius splitting for valuation rings already implies interesting finiteness properties, as captured by the Abhyankar condition.

We also show that the equivalence of Frobenius splitting and the Abhyankar property does not extend to the case where the ground field $k$ fails to be $F$-finite.  Specifically, we construct an example of a Frobenius split excellent DVR of the function field of $\mathbf{P}^2_k$ whose associated valuation is not Abhyankar  (equivalently, not divisorial).  See \autoref{thm:F-split-non-Abhyankar-excellent-DVR-function-field}.

The first author had shown in \cite{DattaFrobeniusSplittingValuations}, under  a separability assumption on the residue field extension, 
that Abhyankar valuations of prime characteristic function fields are always Frobenius split.   \autoref{thm:intro-main-F-split-Abhyankar} removes this assumption and establishes the converse. Furthermore, our proof does not require the use of local uniformization as in that prior work. 

\subsection{The non-function field case} The technical heart of our paper is the following result characterizing Frobenius splitting of valuation rings of $F$-finite fields quite generally  via ramification theory.

\begin{theorem}(see \autoref{thm:F-split-F-finite-no-defect})
    \label{thm:intro-F-split-Abhyankar-center}
        Let $K$ be an $F$-finite field of characteristic $p > 0$. Let $\nu$ be a valuation of $K$ with valuation ring $(\O_\nu, \fram_\nu, \kappa_\nu)$ and value group $\Gamma_\nu$. Then $\O_\nu$ is Frobenius split if and only if 
    \[
    [\Gamma_\nu \colon p\Gamma_\nu][\kappa_\nu \colon \kappa_\nu^p] = [K \colon K^p].
    \]
    Put differently, $\O_\nu$ is Frobenius split if and only if 
  $\nu/\nu^p$ has no ramification defect, where $\nu^p$ denotes the restriction of $\nu$ to the subfield $K^p$ of $K$. 
\end{theorem}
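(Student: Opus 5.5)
Identify $\O_\nu^{1/p}$ with the valuation ring $\O_{\nu^{1/p}}$ of $K^{1/p}$ extending $\nu$; it is the unique extension, since $K^{1/p}/K$ is purely inseparable. The extension $K^{1/p}/K$ has degree $[K:K^p]$, which is finite because $K$ is $F$-finite. Write $e=[\Gamma_\nu:p\Gamma_\nu]$ (the ramification index of $\nu^{1/p}/\nu$, since the value group of $\nu^{1/p}$ is $\frac1p\Gamma_\nu$) and $f=[\kappa_\nu:\kappa_\nu^p]$ (the residue degree). The fundamental inequality gives $ef\le [K:K^p]$, with equality exactly when there is no defect. So it suffices to prove that $\O_\nu\to \O_\nu^{1/p}$ splits if and only if $ef=[K^{1/p}:K]$.

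For the implication ``no defect $\Rightarrow$ split'', I would use the standard fact that if $ef=n$ for a finite extension with unique extension of valuation, then $\O_{\nu^{1/p}}$ is a free $\O_\nu$-module. Explicitly, pick $x_1,\dots,x_e$ whose values represent the cosets of $\frac1p\Gamma_\nu/\Gamma_\nu$, and units $y_1,\dots,y_f$ whose residues form a basis of $\kappa_\nu^{1/p}/\kappa_\nu$. The products $x_iy_j$ are valuatively independent: the value of a $K$-combination equals the minimum of the values of its terms. Hence they are linearly independent, and since there are $n$ of them they form a $K$-basis. Valuative independence then shows that an element lies in $\O_{\nu^{1/p}}$ iff every coefficient times $x_iy_j$ does, so we can normalize $x_1=y_1=1$. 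Thus $\O_\nu^{1/p}=\bigoplus \O_\nu x_iy_j$ has $1$ as a basis element, and projection onto that summand is a Frobenius splitting.

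For the converse, suppose $\phi\colon \O_{\nu^{1/p}}\to \O_\nu$ is a splitting. Extend it $K$-linearly to $\phi\colon K^{1/p}\to K$; it is a nonzero element of $\mathrm{Hom}_K(K^{1/p},K)$, which is a $1$-dimensional $K^{1/p}$-vector space generated by a trace-like map $T$. Hence $\phi=T(c\,\cdot)$ for some $c$. The key claim is that the existence of a $\phi$ sending $\O_{\nu^{1/p}}$ into $\O_\nu$ and $1\mapsto1$ forces $ef=n$. I would argue via a defect criterion: for a defect extension, $\O_{\nu^{1/p}}$ is not finitely generated, or more precisely $\mathfrak m_\nu \O_{\nu^{1/p}}=\mathfrak m_{\nu^{1/p}}$ in a ``dense'' way. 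Then I would show that any $\O_\nu$-linear $\phi$ into $\O_\nu$ satisfies $\phi(\mathfrak m_{\nu^{1/p}})\subseteq\mathfrak m_\nu$. This is the place to use the paper's uniform $F$-compatibility analysis: a splitting makes $\mathfrak m_\nu$ uniformly compatible, so $\phi$ induces a map $\kappa_\nu^{1/p}\to\kappa_\nu$. Next I would filter $\O_{\nu^{1/p}}$ by the valuation ideals $I_\gamma=\{v(x)\ge\gamma\}$ and compare lengths, or alternatively dimensions of the graded pieces over $\kappa_\nu$. The dual module $\mathrm{Hom}_{\O_\nu}(\O_{\nu^{1/p}},\O_\nu)$, being a fractional ideal $c\cdot\{x: T(x\O_{\nu^{1/p}})\subseteq\O_\nu\}$, behaves like an inverse different. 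Containing a surjection means this ``different'' is trivial, i.e.\ the inverse different equals $\O_{\nu^{1/p}}$ up to a unit. I would then show this happens only with no defect, by reducing to finite-dimensional pieces: intersect with a finitely generated subextension $K\subseteq L\subseteq K^{1/p}$ and induct on degree $p$ steps $K(a^{1/p})$.

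The main obstacle is this final step, showing that defect obstructs splitting. In a degree $p$ defect extension $K(\alpha)/K$, with $\alpha^p=a$ and $e=f=1$, I would try directly to show that every $\O_\nu$-linear map $\O_{\nu}[\text{approximants}]\to\O_\nu$ kills $1$ modulo $\mathfrak m_\nu$. The idea is to take elements $\alpha-c$, $c\in K$, whose values approach but never attain the supremum $\delta$ of the distances $v(\alpha-c)$. Then $1=(\alpha-c)^p/(a-c^p)\cdot$unit-type expressions, which forces $\phi(1)$ into arbitrarily high powers of the maximal ideal in the limit. The next step would be to pass from one-step towers to all of $K^{1/p}$ by transitivity of splittings and multiplicativity of $e$, $f$ and the defect.
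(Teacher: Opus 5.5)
The real problem is in the direction ``split $\Rightarrow$ no defect''. You flag its key step as an open obstacle, and the ingredients you propose do not hold.

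\textbf{Defect does not force $\fram_\nu\O_\nu^{1/p}=\fram_{\nu^{1/p}}$.} Take the defect DVR $\O_\nu=K\cap k\llbracket t\rrbracket$ with $K=k(t,y)$, $k$ perfect, and $y\in tk\llbracket t\rrbracket$ transcendental over $k(t)$. Then $e=p$, $f=1$ and $[K\colon K^p]=p^2$, yet $t\O_\nu^{1/p}\neq t^{1/p}\O_\nu^{1/p}$.

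\textbf{A splitting does not make $\fram_\nu$ compatible.} Compatibility holds because $\fram_\nu$ is not finitely generated: then $\fram_\nu=\bigcap_e\fram_\nu^{[p^e]}$ (\autoref{prop:uniformly-F-compatible-valuation-rings}). It fails when $\fram_\nu$ is principal. On $k\llbracket t\rrbracket$ with $k$ perfect, the splitting with $F_*1\mapsto 1$, $F_*t^{p-1}\mapsto 1$ and $F_*t^i\mapsto 0$ for $0<i<p-1$ sends $F_*t^{p-1}\in F_*\fram$ to a unit.

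\textbf{The degree-$p$ step and the tower reduction do not work.} The degree-$p$ computation is vacuous, since $(\alpha-c)^p=a-c^p$ holds identically in characteristic $p$. The tower reduction runs the wrong way. A splitting of $\O_\nu\to\O_\nu^{1/p}$ restricts to splittings of $\O_\nu\to\O_L$, not of $\O_L\to\O_\nu^{1/p}$. Multiplicativity only places the defect in \emph{some} step of the tower, not necessarily the first. The remaining strategies (inverse different, filtrations) are not carried out.

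What is missing are the two cases of the paper's proof.

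\emph{Case $\fram_\nu$ not finitely generated.} Your induced residue splitting is only half the argument. Compatibility also gives $\nu(\sigma(x))\ge\nu(x)$ for a splitting $\sigma\colon K\to K^p$: otherwise $u=x/\sigma(x)\in\fram_\nu$ while $\sigma(u)=1$ (\autoref{lem:rank-1-lemma}). Now take $\alpha\neq0$. Pick $x_i$ with $\nu(x_i\alpha)\in p\Gamma_\nu$ and write $x_i\alpha=\beta c$ with $\beta\in K^p$, $c\in\O_\nu^\times$. The residue splitting then gives some $y_j$ with $\sigma(x_iy_j\alpha)=\beta\sigma(cy_j)\neq0$. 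So $\alpha\mapsto(\sigma(x_iy_j\alpha))_{i,j}$ embeds $K$ into $(K^p)^{ef}$, and $[K\colon K^p]\le ef$ (\autoref{lem:field-theory}).

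\emph{Case $\fram_\nu$ principal.} The paper inducts on $[\Gamma_\nu\colon p\Gamma_\nu]$. Set $\frp=\bigcap_n\fram_\nu^n$; then $\O_\nu/\frp$ is a DVR. It is Frobenius split because non-maximal primes are uniformly $F$-compatible. The DVR case (\autoref{thm:Frobenius-split-discrete-valuations}, an input from earlier work) gives $p[\kappa_\nu\colon\kappa_\nu^p]=[\kappa(\frp)\colon\kappa(\frp)^p]$. The localization $(\O_\nu)_\frp$, with valuation $w$, is Frobenius split and has $[\Gamma_w\colon p\Gamma_w]=[\Gamma_\nu\colon p\Gamma_\nu]/p$, so it satisfies the formula by induction. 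Multiplying the two equalities finishes the proof. Nothing in your plan covers the principal case, DVRs included.

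For ``no defect $\Rightarrow$ split'', your argument is otherwise the paper's, but drop the claim that $\O_\nu^{1/p}$ is free. Freeness here is $F$-finiteness, which by \autoref{thm:F-finiteness-ramification-theoretic} additionally requires $\epsilon(\nu|\nu^p)=[\Gamma_\nu\colon p\Gamma_\nu]$. Conflating the two is exactly the error this paper corrects. Valuative independence only yields $\O_\nu^{1/p}=\bigoplus_{i,j}I_i\,x_iy_j$ with fractional ideals $I_i=\{c\in K : \nu(c)\ge-\nu^{1/p}(x_i)\}$, which need not be principal. Since $I_1=\O_\nu$, projection onto the coefficient of $1$ is still a Frobenius splitting.
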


Note that  \autoref{thm:intro-F-split-Abhyankar-center} holds even outside the function field setting. In particular, it implies an analog  of \autoref{thm:intro-main-F-split-Abhyankar} using the concept of an \emph{Abhyankar center} (\autoref{def:Abhyankar-center}), a natural generalization of the notion of an Abhyankar valuation; see    \autoref{cor:F-split-implies-excellent-center-Abhyankar}. 

\medskip
The proof of \autoref{thm:intro-F-split-Abhyankar-center} uses a  characterization of uniformly $F$-compatible ideals of valuation rings that was obtained by the first author in his Ph.D. thesis \cite{DattaThesis};  we reprove this characterization in \autoref{prop:uniformly-F-compatible-valuation-rings} for the reader's benefit.  Our proof also adapts other ideas from \cite{DattaFrobeniusSplittingValuations} to avoid relying on local uniformization results.

We also provide the following ramification-theoretic criterion of $F$-finiteness of valuation rings using the notion of the \emph{initial (ramification) index} of a generically finite extension of valuations (\autoref{def:initial-index}), which should be compared with \autoref{thm:intro-F-split-Abhyankar-center}.

\begin{theorem}(see \autoref{thm:F-finiteness-ramification-theoretic})
    \label{thm:intro-F-finite-ramification-theoretic}
    Let $K$ be an $F$-finite field of characteristic $p > 0$. Let $\nu$ be a valuation of $K$ with valuation ring $(\O_\nu,\fram_\nu,\kappa_\nu)$ and value group $\Gamma_\nu$. Let $\nu^p$ denote the restriction of $\nu$ to the subfield $K^p$ of $K$. Then the following are equivalent:
    \begin{enumerate}[(i)]
        \item $\O_\nu$ is $F$-finite.
        \item $\nu/\nu^p$ has no ramification defect and $\epsilon(\nu|\nu^p) = [\Gamma_\nu\colon p\Gamma_\nu]$, where $\epsilon(\nu|\nu^p)$ is the initial index of $\nu/\nu^p$.
    \end{enumerate}
\end{theorem}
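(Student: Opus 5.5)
We reduce to the extension of valuation rings $\O_{\nu^p} = \O_\nu^p \subseteq \O_\nu$, where $\O_{\nu^p}$ is the valuation ring of $\nu^p$ on $K^p$. Since $\O_\nu$ is the integral closure of $\O_{\nu^p}$ in $K$ (it is the unique extension, $K/K^p$ being purely inseparable), $F$-finiteness of $\O_\nu$ means that $\O_\nu$ is a finitely generated $\O_{\nu^p}$-module. Over a valuation ring, a finitely generated torsion-free module is free. So (i) is equivalent to $\O_\nu$ being free of rank $n:=[K:K^p]$ over $\O_{\nu^p}$. The invariants of $\nu/\nu^p$ are as follows. The residue degree is $f=[\kappa_\nu:\kappa_\nu^p]$, since the residue field of $\nu^p$ is $\kappa_\nu^p$. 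The ramification index is $e=[\Gamma_\nu:p\Gamma_\nu]$, since $\Gamma_{\nu^p}=p\Gamma_\nu$. No ramification defect means $ef=n$, as in \autoref{def:ramification-defect}. The initial index $\epsilon=\epsilon(\nu|\nu^p)$ (\autoref{def:initial-index}) is the index $[\Gamma_\nu^{\ge 0}:\Gamma_{\nu^p}^{\ge 0}]$ in the sense of counting classes of elements of $\Gamma_\nu$ in $[0,\gamma)$ for small positive $\gamma\in\Gamma_{\nu^p}$, or equivalently the order of the set $\{\gamma\in\Gamma_\nu: 0\le\gamma<\Gamma_{\nu^p}^{>0}\}$. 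It always satisfies $\epsilon\le e$.

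For (i)$\Rightarrow$(ii), let $\O_\nu$ be free of rank $n$ over $\O_{\nu^p}$. Tensoring with the residue field $\kappa_\nu^p$ shows that $\dim_{\kappa_\nu^p}\O_\nu/\fram_{\nu^p}\O_\nu = n$. We then filter $\O_\nu/\fram_{\nu^p}\O_\nu$ by valuation ideals $\{x:\nu(x)\ge\gamma\}$ for $\gamma\in\Gamma_\nu$ with $0\le\gamma$ and $\gamma$ below every positive element of $p\Gamma_\nu$. Each nonzero graded piece is a one-dimensional $\kappa_\nu$-vector space, hence has $\kappa_\nu^p$-dimension $f$. The number of such jumps is exactly $\epsilon$, so $n=\epsilon f$. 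Combining this with $\epsilon\le e$ and the fundamental inequality $ef\le n$ forces $\epsilon=e$ and $ef=n$. The one point requiring care is whether the cut $\fram_{\nu^p}\O_\nu$ is strict or non-strict, depending on whether $\Gamma_{\nu^p}^{>0}$ has a minimum. I plan to treat the discrete-minimum case and the densely ordered case separately, and to define $\epsilon$ so that the count above matches it in both cases.

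For (ii)$\Rightarrow$(i), assume $ef=n$ and $\epsilon=e$. Choose $x_1,\dots,x_e\in\O_\nu$ whose values $0\le\nu(x_i)$ are representatives of $\Gamma_\nu/p\Gamma_\nu$ lying in the initial segment. This is possible exactly because $\epsilon=e$. Choose units $y_1,\dots,y_f$ whose residues form a $\kappa_\nu^p$-basis of $\kappa_\nu$. The standard argument shows that the $n=ef$ products $x_iy_j$ are linearly independent over $K^p$ and that they are ``valuation independent'': for coefficients $a_{ij}\in K^p$ one has $\nu(\sum a_{ij}x_iy_j)=\min_{i,j}\nu(a_{ij}x_iy_j)$. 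Since $ef=n$, they form a $K^p$-basis of $K$. Given $z\in\O_\nu$, write $z=\sum a_{ij}x_iy_j$. Valuation independence gives $\nu(a_{ij})+\nu(x_i)\ge0$ for all $i,j$. Because $\nu(x_i)$ is smaller than every positive element of $p\Gamma_\nu$ and $\nu(a_{ij})\in p\Gamma_\nu$, this forces $\nu(a_{ij})\ge0$. Hence $\O_\nu=\bigoplus\O_{\nu^p}x_iy_j$ is finite free, and $\O_\nu$ is $F$-finite.

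The main obstacle is the bookkeeping around $\epsilon$: showing that the count of graded pieces in (i)$\Rightarrow$(ii) is exactly $\epsilon$, and that in (ii)$\Rightarrow$(i) the representatives can be taken inside the initial segment precisely when $\epsilon=e$. I will handle this by working directly from the definition in \autoref{def:initial-index} and comparing it with the chain of ideals between $\O_\nu$ and $\fram_{\nu^p}\O_\nu$. I will also use \autoref{thm:intro-F-split-Abhyankar-center} as a consistency check: $F$-finiteness implies Frobenius splitting over a valuation ring, hence no defect, which agrees with the conclusion above.
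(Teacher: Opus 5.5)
Your proof is correct, but it takes a different route from the paper's. The paper's proof is a single step. It notes that $\O_\nu^p$ is the valuation ring of $\nu^p$, that $\O_\nu$ is the integral closure of $\O_\nu^p$ in $K$, and that $\nu$ is the unique extension of $\nu^p$. It then reads off the equivalence from Bourbaki's criterion \autoref{thm:finiteness-integral-closure}, applied with the single extension $w_1=\nu$.

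You instead prove this special case of Bourbaki's theorem by hand, using only the fundamental inequality \autoref{thm:ram-rel-inequality}:
\begin{itemize}
\item For (i)$\Rightarrow$(ii), you compute $\dim_{\kappa_\nu^p}\O_\nu/\fram_{\nu^p}\O_\nu=\epsilon f$ and squeeze $n=\epsilon f\le ef\le n$.
\item For (ii)$\Rightarrow$(i), you reuse the valuation-independent basis of \autoref{lem:valuation-minimizer-basis}, choosing the $a_i$ with values in the initial segment $\Sigma$. Then every coordinate of an element of $\O_\nu$ lies in $\O_\nu^p$, not just the single coordinate used for the splitting in \autoref{thm:F-split-F-finite-no-defect}.
\end{itemize}

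The paper's route is shorter but treats a substantial theorem as a black box. Yours is self-contained and produces an explicit free basis $\{x_iy_j\}$ of $\O_\nu$ over $\O_\nu^p$. It also shows clearly why $\epsilon=e$ is exactly the extra condition separating $F$-finiteness from Frobenius splitting.

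The case split you anticipate is unnecessary. A nonzero $x\in\O_\nu$ lies in $\fram_{\nu^p}\O_\nu$ if and only if $\nu(x)\ge\gamma$ for some $\gamma\in(p\Gamma_\nu)_{>0}$. So the values of elements of $\O_\nu\setminus\fram_{\nu^p}\O_\nu$ are exactly the elements of $\Sigma$ from \autoref{def:initial-index}. Consider the chain of ideals $\{x:\nu(x)\ge\sigma\}$ for $\sigma\in\Sigma$, terminating at $\fram_{\nu^p}\O_\nu$. It has exactly $\epsilon$ successive quotients, each one-dimensional over $\kappa_\nu$, whether or not $\Gamma_{\nu,>0}$ has a least element.
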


\autoref{thm:intro-F-finite-ramification-theoretic} is a consequence of a ramification-theoretic characterization of finiteness of integral closures of valuation rings in  finite extensions of their ambient fields; see \autoref{thm:finiteness-integral-closure}. However, a finer analysis of the initial index $\epsilon(\nu|\nu^p)$ reveals that $F$-finite valuation rings are often DVRs in algebro-geometric settings; see \autoref{cor:valuations-with-F-finite-Noetherian-centers}. One can view this corollary as a generalization of \autoref{thm:intro-F-finitness-valuation-rings-function-fields} to a non-function field setting.

\subsection{AI Disclosure} The first author had felt for some time that for function fields of $F$-finite surfaces, the only Frobenius split valuation rings should be the ones that are Abhyankar. He posed the surface case as a problem for a Michigan Research Experience for Graduates (MREG), although his group was unsuccessful in solving the problem at the time. Recently, he posed the surface case of the Frobenius splitting question to ChatGPT 6.0 (Astra) Pro, which confirmed his suspicion. The generalization of the surface case of the argument to arbitrary valuations of $F$-finite (function) fields is due to the authors. \autoref{lem:field-theory} and the example in \autoref{sec:non-F-finite-F-split-DVR-function-field} were essentially obtained by ChatGPT 6.0 Pro. However, the authors bear full responsibility for any errors or inaccuracies in the final presentation of these and other results. The paper was written by the authors; in preparing the text, AI was used only as a proofreading tool.

\subsection{Notation and conventions} Given a valuation $\nu$ of a field $K$, we will denote its valuation ring by $\O_\nu$, the maximal ideal of $\O_\nu$ by $\fram_\nu$, and the residue field of $\O_\nu$ by $\kappa_\nu$. The value group of $\nu$ will be denoted by $\Gamma_\nu$. We will frequently extend $\nu$ to a function on all of $K$ by assigning $\nu(0) = \infty$. Thus, $\nu(0) > \nu(x)$ for all $x \in K^\times$. This will make the statements of results such as \autoref{lem:rank-1-lemma} and \autoref{lem:field-theory} cleaner. If $K$ has characteristic $p > 0$, we will usually denote the restriction of $\nu$ to the subfield $K^p$ by $\nu^p$.

Let $I$ be an ideal of a ring $R$. Let $n \in \mathbf{Z}_{\geq 0}$. By $I^{[n]}$ we will denote the ideal generated by the $n$-th powers of elements of $I$, i.e.,
\[
I^{[n]} \coloneq (x^n \mid x \in I).
\]
Let $R$ be a ring of prime characteristic $p > 0$. The $e$-th iterate of the absolute Frobenius endomorphism of $R$ is the ring homomorphism
\begin{align*}
F^e \colon R &\to R\\
r &\mapsto r^{p^e}.
\end{align*}
The target copy of $R$ when viewed as an $R$-module by restriction of scalars along $F^e$ is denoted by $F^e_*R$. When $R$ is a domain, one can identify $F^e$ with the inclusion $R^{p^e} \hookrightarrow R$, where $R^{p^e} \coloneqq \im(F^e)$. In this paper we will frequently switch between $F^e$ and this inclusion.

\section{Preliminaries from valuation theory}

Let $\nu$ be a valuation of a field $K$ with valuation ring $(\O_\nu, \fram_\nu, \kappa_\nu)$. Let $(R,\fram_R,\kappa_R)$ be a local domain. We say $\nu$ is \emph{centered} on $R$ and $R$ is a \emph{center} of $\nu$ if $(\O_\nu, \fram_\nu, \kappa_\nu)$ dominates $(R,\fram_R,\kappa_R)$. In other words, $\nu$ is centered on $R$ if $\nu(x) \geq 0$ for all $x \in R$ and $\nu(x) > 0$ for all $x \in \fram_R$. 

Typically our centers will be Noetherian and their fraction field will equal the ambient field of the valuation. Our starting point is the following fundamental result about Noetherian local centers of a valuation due to Abhyankar.

\begin{theorem}\cite{AbhyankarLocalDomain}
    \label{thm:Abhyankar-inequality}
    Let $(R,\fram_R,\kappa_R)$ be a Noetherian local domain with fraction field $K$. Let $\nu$ be a valuation of $K$ that is centered on $R$. Suppose $(\O_\nu,\fram_\nu,\kappa_\nu)$ is the valuation ring and $\Gamma_\nu$ is the value group of $\nu$. Then
    \begin{equation}
        \label{eq:Abhyankar-inequality}
    \dim_{\mathbf Q}(\Gamma_\nu \otimes_{\mathbf Z} \mathbf Q) + \td{\kappa_\nu/\kappa_R} \leq \dim(R).
    \end{equation}
    If equality holds, then $\Gamma_\nu$ is a finitely generated abelian group and $\kappa_\nu$ is a finitely generated field extension of $\kappa_R$.
\end{theorem}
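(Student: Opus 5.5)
We prove Abhyankar's inequality by reducing to the dimension-one case and inducting on $\dim R$, in the spirit of Zariski--Samuel. The basic input is the following claim, which we call the \emph{independence lemma}. Suppose $x_1,\dots,x_r \in \fram_R$ have $\mathbf Z$-linearly independent values $\nu(x_1),\dots,\nu(x_r)$. Suppose also that $y_1,\dots,y_s \in R$ have residues in $\kappa_\nu$ that are algebraically independent over $\kappa_R$. Then $r+s \le \dim R$.

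To prove this, set $A = R$ and consider the $\kappa_R$-vector space $\fram_R^n/\fram_R^{n+1}$, or more precisely its growth. The key observation is that the monomials $x^a y^b$ (multi-indices with $|a|\le n$ and $b$ of bounded degree) stay linearly independent modulo suitable valuation ideals. Two different exponent vectors $a$ give different values, since the $\nu(x_i)$ are independent. For a fixed value, the residues of the $y^b$ are linearly independent over $\kappa_R$, since the $\bar y_j$ are algebraically independent.

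Concretely, let $\gamma_0 = \max_i \nu(x_i)$ and compare with a real-valued bound. If $\nu$ has higher rank, first pass to the rank-one valuation obtained by composing with the convex subgroup generated by the values of $\fram_R$, replacing $R$ by a suitable localization. Then choose $c$ with $\nu(\fram_R)\ge c>0$ and set $I_t=\{f\in R: \nu(f)\ge t\}$. From $\fram_R^{n}\subseteq I_{nc}$ and the count above, $\ell(R/\fram_R^{N})\ge \ell(R/I_{Nc})\ge$ (number of independent monomials with value $<Nc$), and that number grows like a polynomial of degree $r+s$ in $N$. Hilbert--Samuel theory gives $\ell(R/\fram_R^N)\sim C N^{\dim R}$, so $r+s\le\dim R$.

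Taking suprema of $r$ and $s$ then gives the inequality, since $\dim_{\mathbf Q}(\Gamma_\nu\otimes\mathbf Q)$ and $\td{\kappa_\nu/\kappa_R}$ are realized by such elements. Elements of $K$ can be written as fractions in $R$, so value group elements are differences of values of elements of $R$. Residue elements are residues of quotients $a/b$ with $\nu(a)=\nu(b)$. In that case we must adjust by passing to $R[a/b]$ localized at the center, which has dimension $\le\dim R$ by the dimension formula inequality for birational extensions. This is the step needing care.

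The main obstacle is the rank-one reduction for a general Noetherian $R$. One must make the linear-independence counting precise when $\nu$ is not real-valued: the value comparison has to be done in $\Gamma_\nu$, while the length bound uses a single Archimedean scale. I would handle this by ordering monomials lexicographically inside the finitely generated subgroup $\bigoplus \mathbf Z\nu(x_i)$ and bounding only those with all exponents $\le N/r$. Every such value is $\le N\gamma_0$, so these monomials lie outside $I_{N\gamma_0+}$. Their number is $\gtrsim N^{r+s}$, and $\ell(R/I)\le \ell(R/\fram_R^{N'})$ where $\fram_R^{N'}\subseteq I$.

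That last containment needs $\nu(\fram_R)$ to be cofinal at the relevant scale. This holds because $\nu$ is centered on the Noetherian ring $R$: write $\fram_R=(z_1,\dots,z_m)$ and use $\min\nu(z_i)>0$, applying Archimedean-ness inside the rank-one quotient.

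For the finiteness statement when equality holds, first observe that any $r$ independent values span a full-rank subgroup. Suppose $\Gamma_\nu$ were not finitely generated, or $\kappa_\nu$ were not finitely generated over $\kappa_R$. Then we could adjoin further elements and blow up along them, producing a center $R'$ birational to $R$ with $\dim R'\le\dim R$ but a strictly larger lower bound from the counting. This is a contradiction; it uses the sharper counting in which the leading coefficient of the Hilbert polynomial bounds the index $[\Gamma:\bigoplus\mathbf Z\nu(x_i)]$ times the residue degree.
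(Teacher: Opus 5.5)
The paper gives no proof of this statement (it is quoted from \cite{AbhyankarLocalDomain}), so your argument has to stand on its own. As written it has two genuine gaps.

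Your Hilbert--Samuel count does prove $\dim_{\mathbf Q}(\Gamma_\nu\otimes_{\mathbf Z}\mathbf Q)\le\dim R$ when $\nu$ has rank one. It does not, however, handle the residue term. Because $\O_\nu$ dominates $R$, the map $R\to\kappa_\nu$ factors through $\kappa_R$. So residues of elements of $R$ lie in $\kappa_R$, and your independence lemma forces $s=0$. Passing to $R'$, the localization of $R[a_1/b_1,\dots,a_s/b_s]$ at the center of $\nu$, does not repair this. The $\bar y_j$ now lie in $\kappa_{R'}$, so a count over $R'$ again sees nothing. The bound $\dim R'\le\dim R$ that you quote discards exactly the term you need. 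The correct input is the dimension inequality with its residue term, $\dim R'+\td{\kappa_{R'}/\kappa_R}\le\dim R$ (Cohen; see e.g.\ Matsumura, \emph{Commutative Ring Theory}, Theorem~15.5). This gives $\dim R'\le\dim R-s$ and reduces everything to the rational-rank bound for the center $R'$, with no $y$-monomials at all.

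The second gap is the higher-rank case, which you flag as the main obstacle but do not resolve. Your comparison needs $\fram_R^{N'}\subseteq I_{>N\gamma_0}$, and this fails whenever the relevant values lie in different archimedean classes. For example, take $R=k[x,y]_{(x,y)}$ with the monomial valuation $\nu(x)=(0,1)$, $\nu(y)=(1,0)$ in $\mathbf Z^2$, ordered lexicographically with the first coordinate dominant.
\begin{itemize}
\item $x^{N'}\in\fram_R^{N'}$ has value $(0,N')<\nu(y)$ for every $N'$, so $R/I_{>N\nu(y)}$ has infinite length.
\item $\min_i\nu(z_i)=\nu(x)$ maps to $0$ in the rank-one quotient, so Archimedean-ness there gives nothing.
\item Coarsening to the rank-one valuation and localizing at its center $P=yR$ yields only $1\le\dim R_P=1$. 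Yet here equality $2=2$ holds, so nothing may be discarded.
\end{itemize}

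The missing idea is an induction through composite valuations. For a nonzero non-maximal prime $\frq$ of $\O_\nu$, write $\nu$ as the composite of $w$ (with ring $(\O_\nu)_\frq$) and $\bar w$ (with ring $\O_\nu/\frq\subseteq\kappa_w$), and put $P=\frq\cap R$. Then:
\begin{itemize}
\item apply the inequality to $w$ on $R_P$ and to $\bar w|_{\kappa(P)}$ on $R/P$;
\item control the possibly transcendental extension $\kappa_w/\kappa(P)$ by the field-theoretic inequality \cite[Chapter~VI, \S 10.3]{BourbakiCA};
\item add, using additivity of rational rank along $0\to\Gamma_{\bar w}\to\Gamma_\nu\to\Gamma_w\to0$ and $\dim R_P+\dim R/P\le\dim R$.
\end{itemize}
With this, your count is needed only in rank one.

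The finiteness assertion inherits both problems. Bounding index times residue degree by the leading Hilbert--Samuel coefficient is again a rank-one count, and it must be run on $R'$ after the reduction above. The step ``adjoin further elements and blow up to get a strictly larger lower bound'' is not an argument as stated.
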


\begin{definition}
    \label{def:Abhyankar-center}
    Let $(R,\fram_R,\kappa_R)$ be a Noetherian local domain with fraction field $K$. Let $\nu$ be a valuation of $K$ that is centered on $R$. We say that $R$ is an \emph{Abhyankar center} of $\nu$ if equality holds in \autoref{eq:Abhyankar-inequality}.
\end{definition}

In the special case when $R$ is essentially of finite type over a ground field $k$ and $\nu$ is a valuation of $K/k$ centered on $R$, \autoref{thm:Abhyankar-inequality} immediately implies the following:

\begin{corollary}
    \label{cor:Abhyankar-equality-function-fields}
    Let $k$ be a field and $K$ be a finitely generated field extension of $k$. Let $\nu$ be a valuation of $K/k$ with valuation ring $(\O_\nu, \fram_\nu, \kappa_\nu)$ and value group $\Gamma_\nu$. Then
    \begin{equation}
        \label{eq:Abhyankar-equality-function-fields}
        \dim_{\mathbf Q}(\Gamma_\nu \otimes_{\mathbf Z} \mathbf Q) + \td{\kappa_\nu/k} \leq \td{K/k}.
    \end{equation}
    If equality holds, then $\Gamma_\nu$ is a finitely generated abelian group and $\kappa_\nu$ is a finitely generated field extension of $k$.
\end{corollary}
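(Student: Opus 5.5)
The plan is to reduce to \autoref{thm:Abhyankar-inequality} by choosing a suitable Noetherian local center of $\nu$ that is essentially of finite type over $k$ and has dimension $\td{K/k}$ together with residue field algebraic over $k$.

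First, write $K = k(x_1,\dots,x_n)$. Replacing each $x_i$ by $x_i^{-1}$ when $\nu(x_i) < 0$, we may assume $\nu(x_i) \geq 0$ for all $i$; this does not change the field they generate. Since $\nu$ is a valuation of $K/k$, it is trivial on $k$, so $k \subseteq \O_\nu$. Hence the finitely generated $k$-algebra $A = k[x_1,\dots,x_n]$ is contained in $\O_\nu$, and its fraction field is $K$. Set $\mathfrak p = \fram_\nu \cap A$, a prime of $A$, and let $R = A_{\mathfrak p}$. Then $R$ is a Noetherian local domain with fraction field $K$, and $\O_\nu$ dominates $R$, so $\nu$ is centered on $R$.

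Next we compare the two sides. \autoref{thm:Abhyankar-inequality} gives
\[\dim_{\mathbf Q}(\Gamma_\nu\otimes_{\mathbf Z}\mathbf Q) + \td{\kappa_\nu/\kappa_R} \leq \dim R.\]
For a finitely generated $k$-domain $A$ with fraction field $K$, the dimension formula gives
\[\dim A_{\mathfrak p} + \td{\kappa(\mathfrak p)/k} = \dim A = \td{K/k}.\]
Transcendence degree is additive in the tower $k \subseteq \kappa_R \subseteq \kappa_\nu$, so
\[\td{\kappa_\nu/k} = \td{\kappa_\nu/\kappa_R} + \td{\kappa_R/k}.\]
Adding $\td{\kappa_R/k}$ to both sides of the Abhyankar inequality and using these two identities yields \autoref{eq:Abhyankar-equality-function-fields}. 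Here we should note that $\td{\kappa_\nu/k}$ is finite; this follows from the inequality itself applied to $R$.

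For the equality case, equality in \autoref{eq:Abhyankar-equality-function-fields} forces equality in the Abhyankar inequality for $R$, since we only added the same finite quantity to both sides. \autoref{thm:Abhyankar-inequality} then says that $\Gamma_\nu$ is finitely generated and that $\kappa_\nu$ is finitely generated over $\kappa_R$. Moreover, $\kappa_R$ is a finitely generated field extension of $k$, being the fraction field of $A/\mathfrak p$. By transitivity of finite generation, $\kappa_\nu$ is finitely generated over $k$.

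The only nontrivial input is the dimension formula $\dim A_{\mathfrak p} + \dim A/\mathfrak p = \dim A$ for affine domains over a field, which is standard (Noether normalization and catenarity). I expect no real obstacle here.
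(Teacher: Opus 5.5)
Your proof is correct and follows essentially the same route as the paper: take a Noetherian local center $R$ of $\nu$ that is essentially of finite type over $k$, use $\dim R = \td{K/k} - \td{\kappa_R/k}$ to convert Abhyankar's inequality into the function-field inequality, and in the equality case use finite generation of $\kappa_\nu$ over $\kappa_R$ and of $\kappa_R$ over $k$. You also spell out how such a center exists (generators in $\O_\nu$, localized at the contraction of $\fram_\nu$), which the paper leaves implicit.
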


\begin{proof}
    Indeed, let $(R,\fram_R,\kappa_R)$ be a Noetherian local domain essentially of finite type over $k$ with fraction field $K$ such that $\nu$ is centered on $R$. Then $\dim(R) = \td{K/k} - \td{\kappa_R/k}$. By \autoref{thm:Abhyankar-inequality}, we then immediately obtain \autoref{eq:Abhyankar-equality-function-fields}. If equality holds in \autoref{eq:Abhyankar-equality-function-fields}, then equality also holds in \autoref{eq:Abhyankar-inequality} for the essentially of finite type $k$-algebra center $R$, i.e., $R$ is an Abhyankar center of $\nu$. The finite generation of $\kappa_\nu/k$ then follows from the fact that $\kappa_\nu$ is a finitely generated field extension of $\kappa_R$ and $\kappa_R$ is a finitely generated field extension of $k$ by construction.
\end{proof}

\begin{definition}
    \label{def:Abhyankar-valuation-function-field}
    Let $k$ be a field and $K$ be a finitely generated field extension of $k$. Let $\nu$ be a valuation of $K/k$. We say that $\nu$ is an \emph{Abhyankar valuation} of $K/k$ if equality holds in \autoref{eq:Abhyankar-equality-function-fields}.
\end{definition}

\begin{remarks}
    {\*}
    \begin{enumerate}
        \item \autoref{eq:Abhyankar-equality-function-fields} holds under the weaker assumption that $K/k$ has finite transcendence degree. In other words, one does not need $K/k$ to be finitely generated. See \cite[Chapter VI, \S 10.3, Corollary~1]{BourbakiCA}.
        \item A valuation $\nu$ of a function field $K/k$ that admits an Abhyankar center need not be an Abhyankar valuation of $K/k$. For example, one can have a discrete valuation (i.e., $\Gamma_\nu = \mathbf{Z}$) of $k(x,y)/k$ with the property that $\kappa_\nu = k$; see for example the valuation constructed in \autoref{sec:non-F-finite-F-split-DVR-function-field}. Such a valuation is not an Abhyankar valuation of $K/k$ because $\dim_{\mathbf Q}(\Gamma_\nu \otimes_{\mathbf Z} \mathbf Q) + \td{\kappa_\nu/k} = 1 + 0 < 2 = \td{k(x,y)/k}$. However, $\nu$ does admit an Abhyankar center, namely, $\O_\nu$ itself. The key point here is that although $\O_\nu$ is Noetherian, it is not essentially of finite type over $k$. This is why $\nu$ fails to be an Abhyankar valuation of $k(x,y)/k$.
        \item Since the Krull dimension of the valuation ring of $\nu$ is bounded above by the rational rank $\dim_{\mathbf Q}(\Gamma_\nu \otimes_{\mathbf Z} \mathbf Q)$ of $\Gamma_\nu$, we see that valuations of function fields always have \emph{finite rank}, i.e., their valuation rings have finite Krull dimension.
    \end{enumerate}
\end{remarks}



\subsection{Ramification of valuations} We next introduce the notion of ramification defect, following \cite{KuhlmannDefect}. We will only define it in the generality we need.

Let $F/K$ be a finite field extension and $w$ be a valuation of $F$ and $\nu$ denote its restriction to $K$. Suppose $w$ is the unique extension of $\nu$ to $F$ up to equivalence of valuations. If the characteristic of the residue field of $\nu$ is $p > 0$, then the Lemma of Ostrowski \cite[Equation~(2.1)]{KuhlmannDefect} asserts that 
\[
[F:K] = [\Gamma_w\colon \Gamma_\nu][\kappa_w\colon \kappa_\nu] p^{\delta},
\]
for some $\delta \in \mathbf{Z}_{\ge 0}$.

\begin{definition}
    \label{def:ramification-defect}
    In the situation of the previous paragraph, the positive integer $p^\delta$ is called the \emph{ramification defect} of the unique extension of valuations $w/\nu$. If $p^\delta = 1$, i.e., if $\delta = 0$, then we say that $w/\nu$ is \emph{defectless} or \emph{has no ramification defect}.
\end{definition}

Sometimes, the exponent $\delta$ is also called the ramification defect.

\begin{example}
    \label{eg:ramification-defect-char-p}
In this paper we will apply the concept of ramification defect to a very specific setup. Let $K$ be a field of characteristic $p > 0$ and $\nu$ be a valuation of $K$. Let $\nu^p$ denote the restriction of $\nu$ to the subfield $K^p$. Then it is easy to see that $\nu$ is the unique extension of $\nu^p$ to $K$. The valuation ring of $\nu^p$ is $\O_\nu^p$, the value group of $\nu^p$ is $p\Gamma_\nu$, and the residue field of $\nu^p$ is $\kappa_\nu^p$. Thus, when $K$ is $F$-finite, we can talk about the defect of $\nu/\nu^p$, i.e., the positive integer 
\[\frac{[K \colon K^p]}{[\Gamma_\nu\colon \Gamma_{\nu^p}][\kappa_\nu\colon \kappa_{\nu^p}]} = \frac{[K \colon K^p]}{[\Gamma_\nu\colon p\Gamma_{\nu}][\kappa_\nu\colon \kappa_{\nu}^p]}.\]
\end{example}

We also need the following important theorem from the ramification theory of valuations.

\begin{theorem}\cite[Chapter VI, \S 8.3, Theorem 1]{BourbakiCA}
    \label{thm:ram-rel-inequality}
    Let $F/K$ be a finite field extension. Let $\nu$ be a valuation of $K$ and $w_1,\dots,w_n$ be extensions of $\nu$ to $F$ that are pairwise not equivalent. Then
    \[\sum_{i=1}^n [\Gamma_{w_i}\colon \Gamma_\nu][\kappa_{w_i}\colon \kappa_\nu] \le [F:K].\]
\end{theorem}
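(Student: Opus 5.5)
The plan is to exhibit, for each $i$, a family of $e_i f_i$ elements of $F$, where $e_i=[\Gamma_{w_i}:\Gamma_\nu]$ and $f_i=[\kappa_{w_i}:\kappa_\nu]$, and to show that the union of these families over all $i$ is linearly independent over $K$. If some $e_i$ or $f_i$ is infinite, the same argument applied to arbitrarily large finite sub-families shows $[F:K]=\infty$, which contradicts finiteness. So we may assume all indices are finite. Pick $a_{i1},\dots,a_{if_i}\in\O_{w_i}$ whose residues form a basis of $\kappa_{w_i}/\kappa_\nu$. Pick $b_{i1},\dots,b_{ie_i}\in F^\times$ whose values $w_i(b_{ik})$ represent the distinct cosets of $\Gamma_{w_i}/\Gamma_\nu$.

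\textbf{Step 1 (one valuation).} Fix $i$ and $c_{jk}\in K$, not all zero. First, for each $k$,
\[w_i\Big(\sum_j c_{jk}a_{ij}\Big)=\min_j \nu(c_{jk}).\]
To see this, divide by a coefficient of minimal value; the residues of the $a_{ij}$ are $\kappa_\nu$-independent, so the reduced sum has nonzero residue. Consequently, for distinct $k$ the values $w_i\big(\sum_j c_{jk}a_{ij}b_{ik}\big)$ lie in distinct cosets $\Gamma_\nu+w_i(b_{ik})$, so they are pairwise distinct. Hence
\[w_i\Big(\sum_{j,k}c_{jk}a_{ij}b_{ik}\Big)=\min_{j,k}\big(\nu(c_{jk})+w_i(b_{ik})\big)<\infty.\]
This proves independence of the single family and the case $n=1$.

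\textbf{Step 2 (gluing via approximation).} The $w_i$ all extend $\nu$ and $F/K$ is algebraic, so their valuation rings are pairwise incomparable. By Chevalley's comparability result, comparable extensions would coincide, and the $w_i$ are pairwise inequivalent by hypothesis. The approximation theorem for pairwise incomparable valuations therefore applies. Set $M_i=\max_k w_i(b_{ik})$. Choose $z_{ijk}\in F$ with
\[w_i\big(z_{ijk}-a_{ij}b_{ik}\big)>M_i \quad\text{and}\quad w_l(z_{ijk})>M_l \text{ for } l\neq i.\]
Now suppose $\sum c_{ijk}z_{ijk}=0$ with $c_{ijk}\in K$, not all zero. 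Scale so that all $c_{ijk}\in\O_\nu$ and some $c_{i_0j_0k_0}=1$. In the valuation $w_{i_0}$, the block $\sum_{j,k}c_{i_0jk}a_{i_0j}b_{i_0k}$ has value $\le\nu(1)+M_{i_0}=M_{i_0}$ by Step 1. Every other contribution has $w_{i_0}$-value $>M_{i_0}$. These contributions are the approximation errors $c_{i_0jk}(z_{i_0jk}-a_{i_0j}b_{i_0k})$ and the terms $c_{ljk}z_{ljk}$ with $l\ne i_0$, using $c\in\O_\nu$ in both cases. Hence the total sum has finite $w_{i_0}$-value, which is a contradiction. Thus the $\sum_i e_if_i$ elements $z_{ijk}$ are $K$-linearly independent, giving the inequality.

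The main obstacle is Step 2. The thresholds in the approximation must be fixed before the coefficients $c$ are known. This is handled by the normalization to $\O_\nu$ with one coefficient equal to $1$, which bounds the relevant block value by the constant $M_{i_0}$. One must also justify pairwise incomparability, so that the approximation theorem applies.
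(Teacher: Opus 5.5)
Your Step 1 is correct (it is the standard argument for $n=1$), and the bookkeeping in Step 2 is valid \emph{once} the elements $z_{ijk}$ exist. Their existence is the gap. Approximation with arbitrary targets and arbitrary thresholds holds for pairwise \emph{independent} valuations, i.e.\ valuations with no common nontrivial coarsening. It does not hold for merely incomparable ones. For those you only have approximation modulo the maximal ideals (the Chinese remainder theorem in the semilocal ring $\bigcap_i\O_{w_i}$), or approximation under compatibility conditions on the data.

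If $\nu$ has rank at least two, distinct extensions can share a nontrivial coarsening, and then your conditions can be contradictory. For example, let $k$ have characteristic different from $2$, let $K=k(x,y)$, and let $\nu$ be the composite of the $y$-adic valuation of $K$ (residue field $k(x)$) with the $x$-adic valuation of $k(x)$. Let $F=K(s)$ with $s^2=1+x$, so $F=k(s)(y)$. Compose the $y$-adic valuation $u'$ of $k(s)(y)$ with the $(s-1)$-adic and $(s+1)$-adic valuations of $k(s)$. This gives two inequivalent extensions $w_1,w_2$ of $\nu$, with $e_i=f_i=1$ and common coarsening $u'$.

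Your recipe permits $a_{11}=a_{21}=b_{11}=1$ and $b_{21}=y$. Then $M_1=0$ and $M_2=w_2(y)$, so you need $z$ with $w_1(z-1)>0$ and $w_2(z)>w_2(y)$. Since $u'$ is a coarsening of $w_2$, the second condition forces $u'(z)\ge u'(y)=1$. Then $z-1$ is a unit of $\O_{u'}$ with residue $-1$, so $w_1(z-1)=0$, and no such $z$ exists.

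This is not merely a bad choice of the $b_{ik}$. Suppose the shared coarsening $u'$ is ramified over its restriction to $K$ (adjoin $\sqrt{y}$ as well in the example above). Then for each $i$ the representatives $b_{ik}$ take at least two distinct $u'$-values. Comparing $u'$-values shows that, for every admissible choice, the $z_{ijk}$ attached to a representative of smallest $u'$-value cannot exist for both $i=1$ and $i=2$.

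So your argument proves the statement when the $w_i$ are pairwise independent, for instance whenever $\nu$ has rank one. The general case needs an extra reduction, for example induction on $n$ through coarsenings. Take a coarsening $u$ of $\nu$, with induced valuation $\bar\nu$ on $\kappa_u$. Each $w_i$ induces a coarsening $u'_i$ extending $u$ and a valuation $\bar w_i$ of $\kappa_{u'_i}$ extending $\bar\nu$. From $0\to\Gamma_{\bar w_i}\to\Gamma_{w_i}\to\Gamma_{u'_i}\to 0$ and the snake lemma, $e(w_i|\nu)=e(u'_i|u)\,e(\bar w_i|\bar\nu)$, and also $f(w_i|\nu)=f(\bar w_i|\bar\nu)$.

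\begin{itemize}
\item If for some $u$ the $u'_i$ are neither all equal nor pairwise distinct, group the $w_i$ by $u'_i$. Apply the induction hypothesis to each residue extension $\kappa_{u'_i}/\kappa_u$ (finite by Step 1) and to the extensions of $u$.
\item Otherwise, if the $w_i$ are not pairwise independent, choose $u$ so that $u'$ is the finest coarsening common to all $w_i$. Then the $\bar w_i$ are pairwise independent, and your argument gives $\sum_i e(\bar w_i|\bar\nu)f(\bar w_i|\bar\nu)\le[\kappa_{u'}:\kappa_u]$. Step 1 applied to $u'/u$ finishes the proof.
\end{itemize}

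Reducing to the henselian case is another standard route.

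The paper itself gives no proof of this statement and simply cites Bourbaki. It only applies the statement, directly or via \autoref{lem:field-theory}, to $K/K^p$, where $\nu$ extends uniquely. So only the case $n=1$, which is your Step 1, is actually needed there.
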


$[\Gamma_{w_i} \colon \Gamma_\nu]$ is usually called the \emph{ramification index} and $[\kappa_{w_i} \colon \kappa_\nu]$ the \emph{residue degree} of the extension of valuations $w_i/\nu$. Another ramification-theoretic invariant that is less commonly seen is called the \emph{initial ramification index}. Note that \autoref{thm:ram-rel-inequality} implies that there are at most finitely many valuations of $F$ that extend $\nu$.

\begin{definition}
    \label{def:initial-index}
    Let $F/K$ be a finite field extension, $w$ a valuation of $F$ and $\nu$ be the restriction of $w$ to $K$. The \emph{initial ramification index} or just \emph{initial index} of $w/\nu$, denoted $\epsilon(w|\nu)$, is the size of the set $\Sigma \coloneqq \{x \in \Gamma_{w,\geq 0} \colon x < y \text{ for all } y \in \Gamma_{\nu, > 0}\}$.
\end{definition}

Let $x_1, x_2 \in \Sigma$ be distinct elements and assume without loss of generality that $x_1 < x_2$. Then $x_2 - x_1 \notin \Gamma_\nu$. Indeed, otherwise $x_2 - x_1 \in \Gamma_{\nu,>0}$ and $x_2-x_1 \leq x_2$. This would contradict our choice of $x_2$. The upshot is that the classes of $x_1$ and $x_2$ modulo $\Gamma_\nu$ are distinct, which implies that $\epsilon(w|\nu) \leq [\Gamma_w\colon \Gamma_\nu]$. When $\epsilon(w|\nu) = [\Gamma_w\colon \Gamma_\nu]$, every element of $\Gamma_w/\Gamma_\nu$ is the class of some element of $\Gamma_{w,\geq 0}$. 

Our interest in the initial index is because of the following result.

\begin{theorem}\cite[Chapter~VI, \S~8.5, Theorem~2]{BourbakiCA}
    \label{thm:finiteness-integral-closure}
    Let $F/K$ be a finite field extension. Let $\nu$ be a valuation of $K$ with valuation ring $(\O_\nu,\fram_\nu,\kappa_\nu)$. Let $\{w_1,\dots,w_n\}$ be a complete system of extensions of $\nu$ to valuations of $F$. Let $B$ be the integral closure of $\O_\nu$ in $F$. Then the following are equivalent:
    \begin{enumerate}[(i)]
        \item $B$ is module finite over $\O_\nu$.
        \item $\sum_{i=1}^n [\Gamma_{w_i}\colon \Gamma_\nu][\kappa_{w_i}\colon\kappa_\nu] = [F\colon K]$ and for all $i$, $\epsilon(w_i|\nu) = [\Gamma_{w_i}\colon\Gamma_\nu]$.
    \end{enumerate}
\end{theorem}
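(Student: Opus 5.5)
The proof proceeds by computing the $\kappa_\nu$-dimension of the fiber $B/\fram_\nu B$ in terms of the invariants in (ii), and then using the fact that torsion-free modules over a valuation ring behave well with respect to reduction modulo $\fram_\nu$. Set $e_i = [\Gamma_{w_i}\colon\Gamma_\nu]$, $f_i = [\kappa_{w_i}\colon\kappa_\nu]$ and $\epsilon_i = \epsilon(w_i|\nu)$. I will use the standard facts that $B = \bigcap_i \O_{w_i}$ is a semilocal Prüfer domain whose maximal ideals $\frak n_1,\dots,\frak n_n$ satisfy $B_{\frak n_i} = \O_{w_i}$.

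\emph{Step 1: the key identity.} The goal of this step is
\[
\dim_{\kappa_\nu} B/\fram_\nu B = \sum_{i=1}^n \epsilon_i f_i ,
\]
with the convention that the left side is infinite if some $\epsilon_i$ is infinite. Every prime of $B$ containing $\fram_\nu B$ lies over $\fram_\nu$, so it is maximal by integrality and hence is one of the $\frak n_i$. Thus $B/\fram_\nu B$ is zero-dimensional with finitely many primes, and so it is the product of its localizations:
\[
B/\fram_\nu B \cong \prod_{i=1}^n \O_{w_i}/\fram_\nu\O_{w_i}.
\]
For a single factor, $\fram_\nu \O_{w_i}$ consists exactly of the $x$ with $w_i(x) \ge \gamma$ for some $\gamma \in \Gamma_{\nu,>0}$. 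Hence the values of the nonzero elements of $\O_{w_i}$ that survive in $\O_{w_i}/\fram_\nu\O_{w_i}$ form precisely the set $\Sigma$ of \autoref{def:initial-index}. Filter $\O_{w_i}/\fram_\nu\O_{w_i}$ by the ideals $\{w_i(x)\ge\sigma\}$ for $\sigma\in\Sigma$. Each successive quotient is a one-dimensional $\kappa_{w_i}$-vector space, which gives $\dim_{\kappa_\nu} = \epsilon_i f_i$. I expect this bookkeeping, namely the product decomposition together with the identification of $\fram_\nu\O_{w_i}$ via $\Sigma$, to be the main technical step. The delicate point is checking that the filtration has exactly $|\Sigma|$ steps when $\Sigma$ is finite, and infinitely many nonzero steps otherwise.

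\emph{Step 2: a lemma on torsion-free modules.} If $M$ is a torsion-free $\O_\nu$-module and $b_1,\dots,b_m\in M$ have $\kappa_\nu$-linearly independent images in $M/\fram_\nu M$, then the $b_i$ are $\O_\nu$-linearly independent. Indeed, from a relation $\sum a_ib_i=0$ we may divide by a coefficient of minimal value, producing a relation with a unit coefficient. That relation reduces to a nontrivial relation modulo $\fram_\nu$, a contradiction.

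\emph{Step 3: (i)$\Rightarrow$(ii).} A finitely generated torsion-free module over a valuation ring is free. So $B$ is free, and its rank is $[F:K]$ since $B\otimes K = F$. Hence $\dim_{\kappa_\nu} B/\fram_\nu B = [F:K]$. By Step 1, the remark after \autoref{def:initial-index} (which gives $\epsilon_i\le e_i$), and \autoref{thm:ram-rel-inequality}, we obtain
\[
[F:K] = \sum_i \epsilon_i f_i \le \sum_i e_if_i \le [F:K].
\]
So both inequalities are equalities. Since each $f_i \ge 1$, this forces $\epsilon_i = e_i$ for all $i$, and also $\sum_i e_if_i=[F:K]$.

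\emph{Step 4: (ii)$\Rightarrow$(i).} By Step 1, hypothesis (ii) gives $\dim_{\kappa_\nu} B/\fram_\nu B = [F:K] =: N$. Lift a $\kappa_\nu$-basis of $B/\fram_\nu B$ to elements $b_1,\dots,b_N\in B$. By Step 2 they are $\O_\nu$-independent, hence $K$-independent, and so they form a $K$-basis of $F$. Now take $x\in B$ and write $x=\sum c_ib_i$ with $c_i\in K$. Suppose some $\nu(c_i)<0$, and choose $j$ with $\nu(c_j)$ minimal. Then $\sum (c_i/c_j)b_i = c_j^{-1}x \in \fram_\nu B$, while the coefficient of $b_j$ is $1$. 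This contradicts the independence of the images of the $b_i$ modulo $\fram_\nu B$. Therefore every $c_i\in\O_\nu$, so $B=\sum \O_\nu b_i$ is module finite over $\O_\nu$.
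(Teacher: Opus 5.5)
Your proof is correct. The paper gives no proof of its own and simply cites Bourbaki. Your route is essentially the one taken there: compute $\dim_{\kappa_\nu} B/\fram_\nu B=\sum_i \epsilon(w_i|\nu)[\kappa_{w_i}\colon\kappa_\nu]$ by splitting the fiber over the maximal ideals of $B$, combine this with freeness of finitely generated torsion-free modules over a valuation ring for (i)$\Rightarrow$(ii), and use the lifting argument for (ii)$\Rightarrow$(i).

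One small simplification: each $\epsilon(w_i|\nu)$ is automatically finite, since $\epsilon(w_i|\nu)\le[\Gamma_{w_i}\colon\Gamma_\nu]\le[F\colon K]$ by the remark after \autoref{def:initial-index} and \autoref{thm:ram-rel-inequality}. So the convention for infinite $\epsilon_i$ in your Step 1 is never needed.
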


\section{Background on valuation rings in prime characteristic}

Let $K$ be a field of characteristic $p > 0$ and $\nu$ be a valuation of $K$ with valuation ring $(\O_\nu, \fram_\nu, \kappa_\nu)$. Let $\nu^p$ denote the restriction of $\nu$ to the subfield $K^p$. As seen in \autoref{eg:ramification-defect-char-p}, the valuation ring of $\nu^p$ is $\O_\nu^p$, the residue field of $\nu^p$ is $\kappa_\nu^p$, and the value group of $\nu^p$ is $p\Gamma_\nu$. Since $\O_\nu^p \hookrightarrow \O_\nu$ is an integral extension of domains, it follows that $\O_\nu \otimes_{\O_\nu^p} K^p = K$. Thus, every element of $K$ can be written as a fraction $x/y^p$, where $x, y \in \O_\nu$ and $y \neq 0$.

The first result we recall is a characterization of Abhyankar valuations $\nu$ of $F$-finite function fields in terms of ramification defect of $\nu/\nu^p$.

\begin{theorem}\textup{(\cite[Theorem~5.1]{DattaSmithValuations}, \cite[Theorem~1.1]{DattaSmithValuationsErratum})}
    \label{thm:Abhyankar-valuations-char-p}
    Let $k$ be an $F$-finite field of characteristic $p > 0$. Let $K$ be a finitely generated field extension of $k$ and $\nu$ be a valuation of $K/k$ with valuation ring $(\O_\nu, \fram_\nu, \kappa_\nu)$. Let $\nu^p$ denote the restriction of $\nu$ to the subfield $K^p$. Then the following are equivalent:
    \begin{enumerate}[(i)]
        \item $\nu$ is an Abhyankar valuation of $K/k$.
        \item $[\Gamma_\nu : p\Gamma_\nu][\kappa_\nu : \kappa_\nu^p] = [K : K^p]$.
        \item $\nu/\nu^p$ has no ramification defect.
    \end{enumerate}
\end{theorem}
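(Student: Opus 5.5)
The plan is to prove (ii)$\Leftrightarrow$(iii) directly from the Abhyankar inequality; (iii)$\Leftrightarrow$(i) is then a tautology. Throughout, put $n = \td{K/k}$ and note that $[K:K^p] = p^n[k:k^p]$, because $K/k$ is finitely generated and $k$ is $F$-finite. Fix a valuation $\nu$ of $K/k$.

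First I bound $[\Gamma_\nu:p\Gamma_\nu]$. Let $r = \dim_{\mathbf Q}(\Gamma_\nu\otimes\mathbf Q)$. By \autoref{cor:Abhyankar-equality-function-fields}, $r \le n$. For any torsion-free abelian group of rational rank $r$, the index $[\Gamma:p\Gamma]$ is at most $p^r$. To see this, take a finitely generated subgroup $\Delta$ of rank $r$ containing lifts of representatives of classes in $\Gamma/p\Gamma$. The natural map $\Delta/p\Delta \to \Gamma/p\Gamma$ is surjective onto the span of those classes, and $\Delta/p\Delta \cong (\mathbf Z/p)^r$. Hence $[\Gamma_\nu:p\Gamma_\nu]\le p^r$.

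Next I bound the residue field. Let $t = \td{\kappa_\nu/k}$, so that $r+t\le n$. For a field $L \supseteq k$ of transcendence degree $t$, one has $[L:L^p]\le p^t[k:k^p]$. This is standard when $L$ is finitely generated. In general it follows by writing $L$ as a directed union of finitely generated subextensions $L_i$, using that $[L_i:L_i^p] = p^{t_i}[k:k^p]$ with $t_i\le t$, and observing that any $p$-independent set in $L$ lies in some $L_i$ and remains $p$-independent there. So $[\kappa_\nu:\kappa_\nu^p]\le p^t[k:k^p]$.

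Combining the two bounds gives
\[
[\Gamma_\nu:p\Gamma_\nu][\kappa_\nu:\kappa_\nu^p]\le p^{r+t}[k:k^p]\le p^n[k:k^p] = [K:K^p].
\]
Hence equality in (ii) forces $r+t=n$, i.e.\ $\nu$ is Abhyankar.

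For the converse, if $r+t=n$ then \autoref{cor:Abhyankar-equality-function-fields} gives $\Gamma_\nu\cong\mathbf Z^r$ and $\kappa_\nu$ finitely generated over $k$. Then $[\Gamma_\nu:p\Gamma_\nu]=p^r$ and $[\kappa_\nu:\kappa_\nu^p]=p^t[k:k^p]$, so the product equals $[K:K^p]$.

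Finally, (ii)$\Leftrightarrow$(iii) is exactly the definition of the defect in \autoref{eg:ramification-defect-char-p}. That example applies since $K$ is $F$-finite and $\nu$ is the unique extension of $\nu^p$.

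The only delicate step is the inequality $[\kappa_\nu:\kappa_\nu^p]\le p^t[k:k^p]$ for a possibly non-finitely-generated residue field. I handle it by the directed-union argument on $p$-independent sets, using the $p$-basis formula $[L_i:L_i^p]=p^{\td{L_i/k}}[k:k^p]$ for finitely generated $L_i/k$.
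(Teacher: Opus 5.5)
Your argument is correct, and it is essentially the standard proof from the cited \cite[Theorem~5.1]{DattaSmithValuations} (the paper itself only cites the result and does not reprove it). That proof bounds $[\Gamma_\nu:p\Gamma_\nu]\le p^{r}$ and $[\kappa_\nu:\kappa_\nu^p]\le p^{t}[k:k^p]$, compares with $[K:K^p]=p^{n}[k:k^p]$ via Abhyankar's inequality $r+t\le n$, and handles the equality case using the finite generation in \autoref{cor:Abhyankar-equality-function-fields}.

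One bookkeeping slip: your opening sentence swaps the labels. The Abhyankar-inequality argument gives (i)$\Leftrightarrow$(ii), while (ii)$\Leftrightarrow$(iii) is the tautology from \autoref{eg:ramification-defect-char-p}, as your final paragraph correctly says.
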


\autoref{thm:Abhyankar-valuations-char-p} generalizes to a non-function field setting as follows:

\begin{theorem}\cite[Theorem~4.0.3, Corollary~4.0.4]{DattaFrobeniusSplittingValuations}
    \label{thm:Abhyankar-centers-char-p}
    Let $(R,\fram_R,\kappa_R)$ be an $F$-finite Noetherian local domain of characteristic $p > 0$ with fraction field $K$. Let $\nu$ be a valuation of $K$ centered on $R$ with valuation ring $(\O_\nu, \fram_\nu, \kappa_\nu)$. Let $\nu^p$ denote the restriction of $\nu$ to the subfield $K^p$. Then the following are equivalent:
    \begin{enumerate}[(i)]
        \item $R$ is an Abhyankar center of $\nu$.
        \item $[\Gamma_\nu : p\Gamma_\nu][\kappa_\nu : \kappa_\nu^p] = [K : K^p]$.
        \item $\nu/\nu^p$ has no ramification defect.
    \end{enumerate}
    Moreover, if the equivalent conditions hold, then any other excellent (equivalently, $F$-finite) local center of $\nu$ with fraction field $K$ is also an Abhyankar center of $\nu$.
\end{theorem}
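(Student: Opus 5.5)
The plan is to note that (ii)$\Leftrightarrow$(iii) is purely formal and to prove (i)$\Leftrightarrow$(ii) by comparing both sides with Kunz's formula. By \autoref{eg:ramification-defect-char-p}, the defect of $\nu/\nu^p$ is $[K:K^p]/\bigl([\Gamma_\nu:p\Gamma_\nu][\kappa_\nu:\kappa_\nu^p]\bigr)$. So (iii) is just a restatement of (ii). The whole content is therefore (i)$\Leftrightarrow$(ii), which I would get by sandwiching $[\Gamma_\nu:p\Gamma_\nu][\kappa_\nu:\kappa_\nu^p]$ between $[K:K^p]$ and a quantity governed by Abhyankar's inequality (\autoref{thm:Abhyankar-inequality}). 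Set $r = \dim_{\mathbf Q}(\Gamma_\nu\otimes_{\mathbf Z}\mathbf Q)$, $t = \td{\kappa_\nu/\kappa_R}$ and $d = \dim R$.

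\textbf{Step 1 (Kunz's formula).} For an $F$-finite Noetherian local domain, $[K:K^p] = p^{d}[\kappa_R:\kappa_R^p]$. I would prove this by passing to the completion, which is $F$-finite. By Cohen's structure theorem the completion is module-finite over a power series ring $\kappa_R[[x_1,\dots,x_d]]$ (or a Cohen ring variant). For the power series ring the rank of Frobenius is visibly $p^d[\kappa_R:\kappa_R^p]$. One then compares generic ranks of $F_*$, using that $F_*R\otimes\widehat R \cong F_*\widehat R$ for $F$-finite $R$.

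\textbf{Step 2 (the two local bounds).} First, for a torsion-free abelian group of rational rank $r$, $\dim_{\mathbf F_p}\Gamma/p\Gamma \le r$. Indeed, if $\gamma_1,\dots,\gamma_{r+1}$ were independent mod $p$, they would satisfy a $\mathbf Z$-relation $\sum n_i\gamma_i=0$ with not all $n_i$ divisible by $p$; after dividing out the common power of $p$ (using torsion-freeness) this contradicts independence mod $p$. Hence $[\Gamma_\nu:p\Gamma_\nu]\le p^r$. Second, for a field extension $L/k_0$ of transcendence degree $t$, one has $[L:L^p]\le p^t[k_0:k_0^p]$, with equality when $L/k_0$ is finitely generated. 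For finitely generated $L$, equality follows from a transcendence basis plus the fact that finite extensions preserve $[\,\cdot\,:\,\cdot^p]$. In general, any finite $p$-independent subset of $L$ over $L^p$ already lives in a finitely generated subextension $L'$. The $p$-independence is inherited, because $L'^p\subseteq L^p$, so independence over the larger field $L^p$ implies it over $L'^p$. This gives the inequality.

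\textbf{Step 3 (assembly).} Combining Steps 1 and 2 with \autoref{thm:Abhyankar-inequality}:
\[
[\Gamma_\nu:p\Gamma_\nu][\kappa_\nu:\kappa_\nu^p]\le p^{r}p^{t}[\kappa_R:\kappa_R^p]\le p^{d}[\kappa_R:\kappa_R^p]=[K:K^p].
\]
If (ii) holds, all inequalities are equalities, so $r+t=d$, which is (i). Conversely, if (i) holds, \autoref{thm:Abhyankar-inequality} says $\Gamma_\nu$ is finitely generated, hence $\cong\mathbf Z^r$, so $[\Gamma_\nu:p\Gamma_\nu]=p^r$. It also says $\kappa_\nu/\kappa_R$ is finitely generated, so Step 2 gives equality in the residue factor, and with $r+t=d$ we obtain (ii).

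\textbf{Step 4 (the ``moreover'' part).} Condition (ii) depends only on $\nu$ and $K$. Hence any other $F$-finite local center with fraction field $K$ is an Abhyankar center, by applying the equivalence to it. For excellent centers, I would invoke the known fact that a Noetherian local domain whose fraction field is $F$-finite is excellent iff it is $F$-finite; this fact rests on Kunz's theorem on $F$-finite rings being excellent, together with its converse in this setting. I expect the main obstacle to be Step 1, Kunz's rank formula, specifically its careful reduction to the complete case. The field-theoretic inequality in Step 2 for non-finitely generated residue extensions is the other point that needs care.
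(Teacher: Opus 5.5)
Your argument is correct and is essentially the standard proof behind the cited result \cite{DattaFrobeniusSplittingValuations}; the paper itself gives no proof beyond that citation. The approach is to sandwich Kunz's formula $[K:K^p]=p^{\dim R}[\kappa_R:\kappa_R^p]$ against Abhyankar's inequality via $[\Gamma_\nu:p\Gamma_\nu]\le p^{r}$ and $[\kappa_\nu:\kappa_\nu^p]\le p^{t}[\kappa_R:\kappa_R^p]$, with equality in the finitely generated case. The ``moreover'' part then follows because (ii) is intrinsic to $\nu$, and excellence equals $F$-finiteness here (\autoref{rem:F-finite-vs-excellent}).

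The one spot to tighten is your sketch of Kunz's formula, since $\widehat R$ need not be a domain. Compare ranks at a minimal prime $Q$ of $\widehat R$ with $\dim \widehat R/Q=d$. For this, use that $\widehat R$ is reduced: Frobenius stays injective after the flat base change $R\to\widehat R$. This gives $[K:K^p]=[\kappa(Q):\kappa(Q)^p]$, and $\widehat R/Q$ is module-finite over $\kappa_R[[x_1,\dots,x_d]]$.
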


\begin{remark}
    \label{rem:F-finite-vs-excellent}
    For a Noetherian domain $R$ of characteristic $p > 0$ whose fraction field $K$ is $F$-finite, $R$ being $F$-finite is equivalent to it being excellent. Indeed, while $F$-finite Noetherian rings are always excellent \cite{KunzOnNoetherianRingsOfCharP}, the converse holds because excellent rings are Japanese (aka N-2) and $R$ is a submodule of the finitely generated integral closure of $R^p$ in $K$.
\end{remark}

We next state a result on Frobenius splitting of Abhyankar valuation rings. This was essentially proved by the first author in \cite{DattaFrobeniusSplittingValuations}.

\begin{theorem}
    \label{thm:Frobenius-split-Abhyankar-valuations}
    Let $K$ be a finitely generated field extension of an $F$-finite field $k$ of characteristic $p > 0$. Let $\nu$ be an Abhyankar valuation of $K/k$ with valuation ring $(\O_\nu, \fram_\nu, \kappa_\nu)$. Then $\O_\nu$ is Frobenius split.
\end{theorem}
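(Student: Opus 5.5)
We need to prove Theorem \ref{thm:Frobenius-split-Abhyankar-valuations}: an Abhyankar valuation ring of an $F$-finite function field is Frobenius split, and we want to avoid local uniformization. The plan is to write down an explicit splitting using a basis adapted to the valuation. By \autoref{thm:Abhyankar-valuations-char-p}, the Abhyankar hypothesis gives
\[
[\Gamma_\nu : p\Gamma_\nu][\kappa_\nu:\kappa_\nu^p] = [K:K^p].
\]
Choose $x_1,\dots,x_m \in \O_\nu$ whose values form a system of coset representatives of $\Gamma_\nu/p\Gamma_\nu$, with $x_1 = 1$. Choose $y_1,\dots,y_n \in \O_\nu$ whose images in $\kappa_\nu$ form a $\kappa_\nu^p$-basis of $\kappa_\nu$, with $y_1 = 1$. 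Representatives with nonnegative value exist because we may add elements of $p\Gamma_\nu$.

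The key step is the standard valuation-theoretic independence lemma. For $a_{ij} \in K^p$ one has
\[
\nu\Bigl(\sum_{i,j} a_{ij} x_i y_j\Bigr) = \min_{i,j} \nu(a_{ij} x_i),
\]
where the terms with distinct $i$ have distinct values modulo $p\Gamma_\nu$. Within a fixed $i$, after dividing by the coefficient of minimal value, residues give a nontrivial $\kappa_\nu^p$-combination of the $\bar y_j$. Consequently the $mn = [K:K^p]$ elements $x_iy_j$ form a $K^p$-basis of $K$.

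Define $\phi\colon K \to K^p$ to be the $K^p$-linear projection onto the coefficient of $x_1y_1 = 1$. Then $\phi(1)=1$. If $f=\sum a_{ij}x_iy_j \in \O_\nu$, then $\nu(a_{11}) \ge \min \nu(a_{ij}x_i) = \nu(f) \ge 0$, so $a_{11} \in K^p \cap \O_\nu = \O_\nu^p$. Thus $\phi$ restricts to an $\O_\nu^p$-linear map $\O_\nu \to \O_\nu^p$ sending $1$ to $1$, which is a Frobenius splitting.

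The main point to handle carefully is that the basis count truly exhausts $[K:K^p]$. This is exactly where the Abhyankar hypothesis enters, through \autoref{thm:Abhyankar-valuations-char-p}, together with the finiteness of $[\Gamma_\nu:p\Gamma_\nu]$ and $[\kappa_\nu:\kappa_\nu^p]$, which follows from the Ostrowski bound. The independence lemma itself is routine, and no local uniformization is needed.
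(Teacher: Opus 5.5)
Your argument is correct, but it is not the proof the paper attaches to this theorem. There, the authors invoke Cutkosky's local uniformization of Abhyankar valuations to obtain a regular local center $A$, essentially of finite type over $k$, with $\kappa_A \cong \kappa_\nu$ and a regular system of parameters whose values freely generate $\Gamma_\nu$. They then rerun the argument of \S3 of Datta's earlier paper, which had relied on Knaf--Kuhlmann and therefore needed a separability hypothesis on $\kappa_\nu/k$.

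Your proof is instead exactly the paper's later argument for the ``defectless $\Rightarrow$ Frobenius split'' half of \autoref{thm:F-split-F-finite-no-defect}. \autoref{lem:valuation-minimizer-basis} is your independence lemma, and the splitting is the same projection onto the coordinate of the basis element $1$. The authors remark that combining this with \autoref{thm:Abhyankar-valuations-char-p} gives a proof of this theorem that avoids local uniformization.

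The geometric route produces a splitting built from monomials in a regular system of parameters of an actual regular center, so it is adapted to a model of $K$. Your route avoids that deep input altogether and extracts the adapted basis directly from the equality $[\Gamma_\nu:p\Gamma_\nu][\kappa_\nu:\kappa_\nu^p]=[K:K^p]$. It therefore proves more: the valuation ring of any valuation of any $F$-finite field with $\nu/\nu^p$ defectless is Frobenius split, with no function-field hypothesis and no center required.
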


\begin{proof}
    This is shown in \cite[\S 3]{DattaFrobeniusSplittingValuations} under the additional assumption that $\kappa_\nu$ is separable over $k$. We now explain how to remove this assumption. Separability of $\kappa_\nu$ over $k$ is used in \cite[\S 3]{DattaFrobeniusSplittingValuations} to apply the local uniformization theorem of Knaf and Kuhlmann \cite[Theorem~1.1]{KnafKuhlmannAbhyankar} for Abhyankar valuations. This gives a regular local center $(A, \fram_A, \kappa_A)$ of $\nu$ that is essentially of finite type over $k$, has fraction field $K$, and satisfies the following two properties:
    \begin{enumerate}[(a)]
        \item $\kappa_A \hookrightarrow \kappa_\nu$ is an isomorphism, 
        \item $A$ has Krull dimension equal to $d \coloneqq \dim_{\mathbf{Q}}(\Gamma_\nu \otimes_{\mathbf{Z}} \mathbf{Q})$ and there exists a regular system of parameters $\{x_1, \ldots, x_d\}$ such that $\{\nu(x_1), \ldots, \nu(x_d)\}$ freely generates $\Gamma_\nu$. 
    \end{enumerate}    
    See \cite[Lemma~3.0.1]{DattaFrobeniusSplittingValuations} for more details. However, the existence of such a regular local center $(A, \fram_A, \kappa_A)$ of $\nu$ is guaranteed by \cite[Theorem~1.1]{CutkoskyAbhyankar} without any separability assumption on $\kappa_\nu$ over $k$. The rest of the proof in \cite[\S 3]{DattaFrobeniusSplittingValuations} goes through without any changes.
\end{proof}

For rank $1$ discrete valuations, one has the following:

\begin{theorem}
    \label{thm:Frobenius-split-discrete-valuations}
    Let $K$ be an $F$-finite field of characteristic $p > 0$. Let $\nu$ be a valuation of $K$ such that the valuation ring $(\O_\nu, \fram_\nu, \kappa_\nu)$ is a DVR. Then the following are equivalent:
    \begin{enumerate}[(i)]
        \item $\O_\nu$ is Frobenius split.
        \item $\O_\nu$ is $F$-finite.
        \item $\O_\nu$ is excellent.
        \item $[\Gamma_\nu : p\Gamma_\nu][\kappa_\nu : \kappa_\nu^p] = [K : K^p]$.
        \item $\nu/\nu^p$ is defectless.
        \item $\dim_{\kappa_\nu^p}(\O_\nu/\fram_\nu^p) = [K : K^p]$.
    \end{enumerate}
    Furthermore, if $K$ is a finitely generated field extension of a field $k$ and $\nu$ is a valuation of $K/k$, then the above conditions are equivalent to $\nu$ being a divisorial valuation of $K/k$.
\end{theorem}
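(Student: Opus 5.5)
We prove the equivalences for a DVR $\O_\nu$ with uniformizer $t$, so $\Gamma_\nu=\mathbf Z$, $[\Gamma_\nu:p\Gamma_\nu]=p$, $p\Gamma_\nu = \Gamma_{\nu^p}$, and $\fram_\nu^{[p]} = \fram_\nu^p = t^p\O_\nu$. The plan is to establish (iv)$\Leftrightarrow$(v)$\Leftrightarrow$(vi), then (ii)$\Leftrightarrow$(iii)$\Leftrightarrow$(vi), then (ii)$\Rightarrow$(i) and (i)$\Rightarrow$(vi). The divisorial statement at the end follows from \autoref{thm:intro-F-finitness-valuation-rings-function-fields}.

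For (iv)$\Leftrightarrow$(v), note that this is just \autoref{eg:ramification-defect-char-p}. For (vi), filter $\O_\nu/t^p\O_\nu$ by $t^i\O_\nu/t^{i+1}\O_\nu\cong\kappa_\nu$ for $0\le i<p$. Each graded piece has $\kappa_\nu^p$-dimension $[\kappa_\nu:\kappa_\nu^p]$, since $\O_\nu^p$ acts through $\O_\nu^p/t^p\O_\nu^p\to\kappa_\nu^p$. Hence $\dim_{\kappa_\nu^p}\O_\nu/\fram_\nu^p = p[\kappa_\nu:\kappa_\nu^p]$, which is exactly the left side of (iv). So (iv)$\Leftrightarrow$(vi), and this holds even when one of the numbers is infinite, using Ostrowski's bound $\le [K:K^p]$.

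For (ii)$\Leftrightarrow$(iv), we apply \autoref{thm:finiteness-integral-closure} to $K/K^p$ and $\nu^p$. Here $\O_\nu$ is the integral closure of $\O_\nu^p$ in $K$, and $\nu$ is the unique extension. The initial index is automatically $p$: the set $\Sigma=\{0,1,\dots,p-1\}$ consists of the elements of $\mathbf Z_{\ge0}$ below the smallest positive element $p$ of $p\mathbf Z$. So $\O_\nu$ is finite over $\O_\nu^p$, i.e.\ $F$-finite, iff $p[\kappa_\nu:\kappa_\nu^p]=[K:K^p]$. For (ii)$\Leftrightarrow$(iii), apply \autoref{rem:F-finite-vs-excellent} to the Noetherian domain $\O_\nu$.

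For (ii)$\Rightarrow$(i), $\O_\nu$ is a finitely generated torsion-free module over the DVR $\O_\nu^p$, hence free. Any free module of finite rank containing $1\notin\fram_{\nu^p}F_*\O_\nu$ has $1$ as part of a basis. We check the latter using $\nu(1)=0<p$. Projecting onto that basis element gives the splitting.

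The main obstacle is (i)$\Rightarrow$(vi) (equivalently (iv)) without assuming finiteness. Take a splitting $\phi:F_*\O_\nu\to\O_\nu$. The strategy is to show that $\kappa_\nu^p$-linearly independent elements of $\O_\nu/t^p\O_\nu$ lift to $K^p$-linearly independent elements of $K$, and conversely that a $K^p$-basis of $K$ can be scaled into $\O_\nu$ so that it stays independent modulo $t^p$. We get $\ge$ directly from Ostrowski. For $\le$, choose a basis $e_1,\dots,e_n$ of $K$ over $K^p$ lying in $\O_\nu$, and use the maps $\phi(e_j^{*}\cdot -)$ to detect coefficients. The key step is that, since $\phi$ is surjective, $\phi(F_*\fram_\nu)\subseteq\fram_\nu$. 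This holds because $\phi(F_*t)$ has valuation $\ge$ something positive, so $\phi$ induces a nonzero map on $\O_\nu/\fram_\nu^{p}$. We then argue by contradiction: if the dimension in (vi) were $<n$, some nontrivial $\O_\nu^p$-combination $\sum a_j^p e_j$ with a unit coefficient would lie in $t^p\O_\nu$. Dividing by $t^p$ and using the trace-like maps $\Hom_{K^p}(K,K^p)\cong K$, the images under $\phi$ would produce an element of $\O_\nu$ violating integrality, a contradiction. I expect to fall back on the general uniformly $F$-compatible ideal analysis later in the paper if this direct argument stalls.
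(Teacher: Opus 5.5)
Most of your plan is sound: (iv)$\Leftrightarrow$(v)$\Leftrightarrow$(vi) via the filtration count, (ii)$\Leftrightarrow$(iv) via \autoref{thm:finiteness-integral-closure} with initial index $p$, (ii)$\Leftrightarrow$(iii), and (ii)$\Rightarrow$(i) all work. The divisorial clause is also fine; note that $k$ is automatically $F$-finite, since $K$ is $F$-finite and finitely generated over $k$. The gap is exactly the step you flag, (i)$\Rightarrow$(vi).

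Your key claim, that every Frobenius splitting $\phi$ of a DVR satisfies $\phi(F_*\fram_\nu)\subseteq\fram_\nu$, is false. Take $V=\mathbf{F}_p\llbracket t\rrbracket$, where $F_*V$ is free on $F_*1,\dots,F_*t^{p-1}$. The $V$-linear map with $\phi(F_*1)=\phi(F_*t)=1$ and $\phi(F_*t^i)=0$ for $i\geq 2$ is a splitting with $\phi(F_*t)=1$. (\autoref{prop:uniformly-F-compatible-valuation-rings} gives compatibility of $\fram$ only when $\fram$ is not finitely generated.)

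The contradiction you then aim for also does not materialize. Take $e_j\in\O_\nu$ and $d_j\in K$ with $\phi_K(d_je_i)=\delta_{ij}$. The relation $\sum_j a_j^pe_j=t^pz$ yields $a_j^p=t^p\phi_K(d_jz)$. This contradicts nothing unless $d_jz\in\O_\nu$, and you have no control over the $d_j$. Having both the $e_j$ and the $d_j$ in $\O_\nu$ would already force $F_*\O_\nu=\bigoplus_j\O_\nu^pe_j$, so it presupposes the $F$-finiteness you are trying to prove.

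Your fallback is circular. \autoref{lem:rank-1-lemma} requires $\fram_\nu$ not finitely generated, and \autoref{thm:F-split-F-finite-no-defect} invokes the present theorem in its induction step.

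The missing idea is Noetherianity, which makes the DVR case easy and needs no compatibility at all. Put the multipliers, not the basis, inside $\O_\nu$: choose a $K^p$-basis $d_1,\dots,d_n$ of $K$ with every $d_j\in\O_\nu$, rescaling by $p$-th powers. The map $K\to\Hom_{K^p}(K,K^p)$ given by $d\mapsto\phi_K(d\,\cdot\,-)$ is a nonzero $K$-linear map between one-dimensional $K$-vector spaces, hence an isomorphism. So the functionals $\phi_K(d_j\,\cdot\,-)$ form a $K^p$-basis of the dual and have zero common kernel.

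Therefore $x\mapsto(\phi(F_*(d_jx)))_j$ is an injective $\O_\nu$-linear map $F_*\O_\nu\hookrightarrow\O_\nu^{\oplus n}$. Since $\O_\nu$ is Noetherian, $F_*\O_\nu$ is finitely generated. This gives (i)$\Rightarrow$(ii), using only $\Hom_{\O_\nu}(F_*\O_\nu,\O_\nu)\neq 0$, and (iv) and (vi) then follow from the equivalences you already established. For comparison, the paper does not reprove this theorem; it cites earlier work for it.
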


\begin{proof}
    This follows from \cite[Corollary~6.6.3]{DattaSmithValuations} and \cite[\S 5, (10)]{DattaFrobeniusSplittingValuations}. Alternatively, this result also appears in \cite[Corollary~IV.58]{DattaThesis}.
\end{proof}

\subsection{Uniformly \emph{F}-compatible ideals of valuation rings} The results of this subsection appear in the first author's Ph.D. thesis \cite[\S 5.3.2]{DattaThesis}. We will include detailed proofs here for the reader's benefit.

Drawing inspiration from the Noetherian $F$-finite case, one can make the following definition.

\begin{definition}
    \label{def:uniformly-F-compatible-ideal}
    Let $R$ be a ring of prime characteristic $p > 0$ and $\varphi \colon F^e_*R \to R$ be an $R$-linear map. An ideal $I$ of $R$ is \emph{$\varphi$-compatible} if $\varphi(F^e_*I) \subseteq I$. We say $I$ is \emph{uniformly $F$-compatible} if $I$ is $\varphi$-compatible for all $\varphi \in \bigcup_{e \geq 0} \Hom_R(F^e_*R,R)$.
\end{definition}

Thus, $I$ is $\varphi$-compatible precisely when $\varphi$ induces an $R/I$-linear map $\overline{\varphi} \colon F^e_*(R/I) \to R/I$ such that the following diagram commutes:
\[\begin{tikzcd}
	{F^e_*R} & R \\
	{F^e_*(R/I)} & {R/I}
	\arrow["\varphi", from=1-1, to=1-2]
	\arrow["F^e_*\pi"',two heads, from=1-1, to=2-1]
	\arrow["\pi",two heads, from=1-2, to=2-2]
	\arrow["{\overline{\varphi}}"', from=2-1, to=2-2]
\end{tikzcd}.\]
Here the vertical maps are the canonical projections. 

We collect some basic properties of uniformly $F$-compatible ideals.

\begin{lemma}\cite[Lemma~3.1.2]{DattaTuckerSplinterLoci}
    \label{lem:uniformly-F-compatible-properties}
    Let $R$ be a ring of prime characteristic $p > 0$, and let $\Sigma$ denote the collection of uniformly $F$-compatible ideals of $R$. Then we have the following:
    \begin{enumerate}[(1)]
        \item $\Sigma$ is closed under arbitrary sums and arbitrary intersections.
        \item If $I \in \Sigma$ and $\frp \in \Ass_R(R/I)$, then $\frp \in \Sigma$.
        \item If $R$ is Noetherian and $I \in \Sigma$, then $\sqrt{I} \in \Sigma$.
        \item If $R$ is Frobenius split, then every element of $\Sigma$ is a radical ideal.
    \end{enumerate}
\end{lemma}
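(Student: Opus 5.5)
The four parts are proved in order, each directly from \autoref{def:uniformly-F-compatible-ideal}.

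For (1), take a family $\{I_\lambda\}$ in $\Sigma$ and any $\varphi \colon F^e_*R \to R$. Since $F^e_*(\sum I_\lambda) = \sum F^e_*I_\lambda$ as additive subgroups, and $\varphi$ is additive, we get $\varphi(F^e_*\sum I_\lambda) = \sum \varphi(F^e_*I_\lambda) \subseteq \sum I_\lambda$. Similarly, $\varphi(F^e_*\bigcap I_\lambda) \subseteq \bigcap \varphi(F^e_*I_\lambda) \subseteq \bigcap I_\lambda$.

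For (2), write $\frp = (I :_R x)$ for some $x \in R$. The goal is to show that colon ideals of the form $(I:x)$ with $I \in \Sigma$ lie in $\Sigma$. Given $\varphi \colon F^e_*R \to R$, define the $R$-linear map $\psi \coloneqq \varphi(F^e_*(x^{p^e-1}\cdot -))$. If $r \in (I:x)$, then
\[ x\,\varphi(F^e_*r) = \varphi(F^e_*(x^{p^e} r)) = \psi(F^e_*(xr)). \]
Because $xr \in I$ and $I$ is $\psi$-compatible, this element lies in $I$. Hence $\varphi(F^e_*r) \in (I:x)$, as required. No Noetherian hypothesis is needed here, since associated primes of $R/I$ are by definition primes of this form $(I:x)$.

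For (3), use that $R$ is Noetherian and pick $e_0$ with $(\sqrt I)^{[p^{e_0}]} \subseteq I$. Given $\varphi \colon F^e_*R \to R$ and $r \in \sqrt I$, it suffices to show $\varphi(F^e_*r)^{p^{e_0}} \in I$. Using the $R$-linear map $\varphi_{e_0} \coloneqq F^{e_0} \circ \varphi$ composed appropriately, one has
\[ \varphi(F^e_*r)^{p^{e_0}} = \varphi(F^e_*(r^{p^{e_0}}))\cdot(\text{unit adjustment}). \]
A cleaner route is to note that $\varphi(F^e_* r)^{p^{e_0}}$ equals $\varphi'(F^{e}_* r^{p^{e_0}})$ for the map $\varphi'$ obtained by conjugating $\varphi$ with $F^{e_0}$; concretely, $\varphi(F^e_*r)^{p^{e_0}} = \varphi(F^e_*r^{p^{e_0}})$ fails in general, so this step has to be handled with care. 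The alternative I would actually use is the description of $\sqrt I$ as the intersection of the minimal primes of $I$. Minimal primes of a Noetherian $R/I$ are associated primes, so they lie in $\Sigma$ by (2), and their intersection lies in $\Sigma$ by (1). This avoids the computation altogether, and I expect it to be the intended argument; making it the main route is the key choice.

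For (4), let $\sigma$ be a Frobenius splitting, $I \in \Sigma$, and $r^p \in I$. Then $r = \sigma(F_*r^p) \in I$ by $\sigma$-compatibility, so $I$ is closed under $p$-th roots. If $r^n \in I$, choose $e$ with $p^e \ge n$; then $r^{p^e} \in I$, and iterating the previous step $e$ times gives $r \in I$. Hence $I$ is radical.

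The only delicate point is (3), which the minimal-primes argument resolves.
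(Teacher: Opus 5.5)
Your proof is correct. The paper does not reprove this lemma; it cites \cite[Lemma~3.1.2]{DattaTuckerSplinterLoci}. Your argument is the standard one: additivity for (1), compatibility of colon ideals $(I:x)$ for (2), writing $\sqrt{I}$ as the intersection of the minimal primes of $I$ (which are associated when $R$ is Noetherian) for (3), and the identity $r = \sigma(F_*r^p)$ for (4).

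Two cosmetic fixes. In (2) the auxiliary map $\psi$ is unnecessary, because $x^{p^e}r = x^{p^e-1}(xr) \in I$ already gives $x\varphi(F^e_*r) \in \varphi(F^e_*I) \subseteq I$. In (3), delete the abandoned ``unit adjustment'' computation rather than leaving it in the write-up.
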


Since every proper radical ideal of a valuation ring is prime, one immediately obtains:

\begin{corollary}
    \label{cor:F-split-valuation-ring-uniformly-compatible-prime}
    Let $V$ be a valuation ring of characteristic $p > 0$. If $V$ is Frobenius split, then every proper uniformly $F$-compatible ideal of $V$ is a prime ideal.
\end{corollary}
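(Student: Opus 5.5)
This is immediate from \autoref{lem:uniformly-F-compatible-properties}(4) combined with the structure of ideals in a valuation ring. Let $I \subsetneq V$ be a proper uniformly $F$-compatible ideal. Since $V$ is Frobenius split, \autoref{lem:uniformly-F-compatible-properties}(4) tells us that $I$ is radical. So it is enough to check the standard fact that every proper radical ideal of a valuation ring is prime.

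To prove that fact, suppose $ab \in I$ with $a, b \in V$. The ideals of $V$ are totally ordered under inclusion, so we may assume without loss of generality that $(b) \subseteq (a)$. Then $b = ac$ for some $c \in V$, and so
\[
b^2 = abc \in I.
\]
Since $I$ is radical, this gives $b \in I$. Hence $I$ is prime. Note that properness is needed here, because $V$ itself is a radical ideal but is not prime.

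We do not re-prove \autoref{lem:uniformly-F-compatible-properties}(4), since it is assumed. For the reader's orientation, its argument goes as follows. If $x^p \in I$, choose a splitting $\varphi \colon F_*V \to V$ with $\varphi(F_*1) = 1$. Then
\[
x = \varphi(F_* x^p) \in \varphi(F_* I) \subseteq I,
\]
and iterating this handles higher powers. There is no real obstacle anywhere in the proof; the only care required is to assume $I$ proper when invoking the radical-implies-prime step.
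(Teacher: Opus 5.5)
Your proposal is correct and follows the paper's own argument: Lemma~\ref{lem:uniformly-F-compatible-properties}(4) shows that every uniformly $F$-compatible ideal of a Frobenius split ring is radical, and every proper radical ideal of a valuation ring is prime. Your short proof of that last fact, which the paper merely cites, is also correct.
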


The next result is also straightforward, and we omit its proof.

\begin{lemma}
    \label{lem:intersection-all-Frobenius-powers}
    Let $R$ be a ring of prime characteristic $p > 0$. If $I$ is an ideal of $R$, then
    \[\bigcap_{e \in \mathbf{N}} I^{[p^e]}\]
    is uniformly $F$-compatible.
\end{lemma}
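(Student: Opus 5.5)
Let $J \coloneqq \bigcap_{e \in \mathbf{N}} I^{[p^e]}$. To show that $J$ is uniformly $F$-compatible, we fix an arbitrary $e_0 \geq 0$ and an $R$-linear map $\varphi \colon F^{e_0}_*R \to R$, and we must prove $\varphi(F^{e_0}_*J) \subseteq J$. Since $J$ is an intersection, it suffices to show that $\varphi(F^{e_0}_*J) \subseteq I^{[p^e]}$ for each fixed $e$. (The case $e_0 = 0$ is trivial, because $\varphi$ is then multiplication by an element of $R$ and $J$ is an ideal, so we may assume $e_0 \geq 1$.)

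The key observation is a compatibility between $R$-linear maps $\varphi$ and Frobenius powers of ideals. For any ideal $\mathfrak a$ of $R$ we have
\[
\varphi\bigl(F^{e_0}_*(\mathfrak a^{[p^{e_0}]})\bigr) \subseteq \mathfrak a .
\]
Indeed, $\mathfrak a^{[p^{e_0}]}$ is generated by elements $a^{p^{e_0}}$ with $a \in \mathfrak a$. A general element is therefore $\sum r_i a_i^{p^{e_0}}$, and $R$-linearity of $\varphi$ (where $R$ acts on $F^{e_0}_*R$ through $p^{e_0}$-th powers) gives
\[
\varphi\Bigl(F^{e_0}_*\sum_i r_i a_i^{p^{e_0}}\Bigr) = \sum_i a_i\,\varphi(F^{e_0}_* r_i) \in \mathfrak a .
\]

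We apply this with $\mathfrak a = I^{[p^e]}$. We note that $(I^{[p^e]})^{[p^{e_0}]} = I^{[p^{e+e_0}]}$, because both ideals are generated by the elements $x^{p^{e+e_0}}$ for $x \in I$: the generators $(\sum r_j x_j^{p^e})^{p^{e_0}} = \sum r_j^{p^{e_0}} x_j^{p^{e+e_0}}$ of the left side lie in the right side, and the reverse inclusion is clear. Since $J \subseteq I^{[p^{e+e_0}]}$, we obtain
\[
\varphi(F^{e_0}_*J) \subseteq \varphi\bigl(F^{e_0}_*I^{[p^{e+e_0}]}\bigr) \subseteq I^{[p^e]} .
\]
As $e$ was arbitrary, this yields $\varphi(F^{e_0}_*J) \subseteq J$.

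No step is a real obstacle. The only point requiring care is the identity $(I^{[p^e]})^{[p^{e_0}]} = I^{[p^{e+e_0}]}$, which holds because Frobenius is additive in characteristic $p$. Finally, we note that the index set $\mathbf{N}$ may or may not include $0$; this does not affect the intersection, since $I^{[p^0]} = I$ contains every $I^{[p^e]}$.
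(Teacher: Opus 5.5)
Your proof is correct. The paper omits the proof as straightforward, and your argument is the standard one it has in mind: $J \subseteq I^{[p^{e+e_0}]} \subseteq (I^{[p^e]})^{[p^{e_0}]}$ combined with $\varphi\bigl(F^{e_0}_*(\mathfrak a^{[p^{e_0}]})\bigr) \subseteq \mathfrak a$. Only this easy inclusion is needed, not the full equality of Frobenius powers you verify.
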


Recall that for an ideal $I$ of a ring $R$ and $n \in \mathbf{N}$, $I^{[n]} = (i^n \colon i \in I)$ is the ideal generated by the $n$-th powers of elements of $I$. This notation is most frequently used when $R$ has prime characteristic $p > 0$ and $n = p^e$, because then $I^{[p^e]}$ is just the expansion of $I$ under the $e$-th iterate of Frobenius $F^e \colon R \to F^e_*R$. However, the notation makes sense for an ideal of a ring of arbitrary characteristic and any non-negative integer power. Note that when $I$ is a principal ideal, $I^{[n]} = I^n$.

\begin{lemma}
    \label{lem:intersection-ideal-powers-valuation-ring}
    Let $(V,\fram_V,\kappa_V)$ be a valuation ring and $I \subsetneq V$ be a proper ideal of $V$. Then we have the following:
    \begin{enumerate}[(i)]
        \item For all $n \geq 0$, $I^{[n]} = I^n$.\label{lem:intersection-ideal-powers-valuation-ring.i}
        \item $\bigcap_{n \in \mathbf{N}} I^n$ is a prime ideal of $V$.\label{lem:intersection-ideal-powers-valuation-ring.ii}
        \item If $\fram_V$ is not finitely generated, then $\fram_V = \fram_V^2$.\label{lem:intersection-ideal-powers-valuation-ring.iii}
        \item If $\fram_V$ is a nonzero finitely generated ideal and $\mathfrak{M} \coloneqq \bigcap_{n \in \mathbf{N}} \fram_V^n$, then $V/\mathfrak{M}$ is a DVR.\label{lem:intersection-ideal-powers-valuation-ring.iv}
    \end{enumerate}
\end{lemma}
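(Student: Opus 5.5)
The plan is to prove the four parts in order, using only the total ordering of ideals in $V$ and the fact that $V$ is a domain whose ideals are determined by values.

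For \ref{lem:intersection-ideal-powers-valuation-ring.i}, the inclusion $I^{[n]} \subseteq I^n$ is clear. For the reverse inclusion, take a generator $a_1 \cdots a_n$ of $I^n$ with $a_i \in I$. Choose $a_j$ of minimal value among the $a_i$. Then $a_i/a_j \in V$ for every $i$, so $a_1\cdots a_n = a_j^n \cdot \prod_i (a_i/a_j) \in (a_j^n) \subseteq I^{[n]}$.

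For \ref{lem:intersection-ideal-powers-valuation-ring.ii}, set $P = \bigcap_n I^n$. This is an ideal, and it is proper because $I$ is proper. Suppose $xy \in P$ but $x, y \notin P$. Then there are $m, n$ with $x \notin I^m$ and $y \notin I^n$. Since ideals of $V$ are totally ordered and $x \notin I^m$, we get $I^m \subseteq (x)$, and similarly $I^n \subseteq (y)$. Hence $I^{m+n} \subseteq (xy)$. Since $xy \in P \subseteq I^{m+n+1}$, we obtain $I^{m+n} \subseteq I^{m+n+1}$, so $I^{m+n} = I^{m+n+1}$.

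This is the main obstacle. We must rule out $I^k = I^{k+1}$ when $xy \in I^k$ forces $I^k \subseteq (xy)$, unless $I^k$ is itself prime. I would handle it as follows. If $I^k = I^{k+1}$, then $I^k = I^{2k}$, so $J := I^k$ is idempotent. An idempotent ideal of a valuation ring is prime: if $ab \in J$ with $a, b \notin J$, then $J \subsetneq (a)$ and $J \subsetneq (b)$. Choosing $c \in J$ with $v(c) = \min(v(a), v(b))$ and using $J = J^2$ to write $c$ as a product of two elements of $J$ leads to a contradiction through a value comparison. A cleaner route is to show that every element $z \in J$ can be written as $z = z_1 z_2$ with $z_1, z_2 \in J$, by using (i) and the fact that $J^2$ is generated by squares. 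Then $ab \in J$ gives $ab = u^2 w$ with $u \in J$ and $w \in V$, and a value argument shows $a$ or $b$ lies in $(u) \subseteq J$. Then $P = J$ is prime, giving the contradiction. I expect this verification to be the fiddliest step.

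For \ref{lem:intersection-ideal-powers-valuation-ring.iii}, suppose $\fram_V$ is not finitely generated and $x \in \fram_V$. Since $(x) \neq \fram_V$, there is $y \in \fram_V \setminus (x)$, so $x = yt$ with $t \in \fram_V$. Hence $x \in \fram_V^2$.

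For \ref{lem:intersection-ideal-powers-valuation-ring.iv}, a finitely generated ideal of a valuation ring is principal, so $\fram_V = (t)$. By \ref{lem:intersection-ideal-powers-valuation-ring.ii}, $\mathfrak M$ is prime, and $t \notin \mathfrak M$ since $t \in (t^2)$ would make $t$ a unit. So $W := V/\mathfrak M$ is a valuation domain with principal maximal ideal $(\bar t)$ and $\bigcap_n (\bar t^n) = 0$. For any nonzero $\bar x \in W$, let $n$ be maximal with $\bar x \in (\bar t^n)$. Writing $\bar x = \bar t^n \bar u$, the element $\bar u$ lies outside $(\bar t)$, so it is a unit. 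Therefore every nonzero ideal of $W$ is a power of $(\bar t)$, so $W$ is a Noetherian valuation ring that is not a field, i.e.\ a DVR.
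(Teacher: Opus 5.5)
Your proposal is correct. Part (i) is the paper's argument: reduce to a single principal generator. Parts (ii)--(iv) take different routes.

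For (ii), the paper shows directly that $\bigcap_n I^n$ is radical. If $x^m \in I^{nm}=(I^n)^m$, then (i) gives some $i\in I^n$ with $i^m\mid x^m$, hence $i\mid x$. The paper then uses that proper radical ideals of a valuation ring are prime.

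You instead show that a failure of primality forces $I^{k}=I^{k+1}$ for $k=m+n$, so that $P=I^k$ is idempotent. You then prove that idempotent proper ideals are prime, using $J=J^{[2]}$ and the fact that $\nu(a)+\nu(b)\ge 2\nu(u)$ forces $u\mid a$ or $u\mid b$. This works, but discard your first sketch. There is no $c\in J$ with $\nu(c)=\min(\nu(a),\nu(b))$, since $J\subsetneq(a)$ and $J\subsetneq(b)$; only the ``cleaner route'' is valid.

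The stabilization detour is also unnecessary. For every $k$, $xy\in I^{2k}=(I^k)^{[2]}$ gives $u\in I^k$ with $u^2\mid xy$, so $x\in I^k$ or $y\in I^k$. Since the $I^k$ decrease, one of $x,y$ lies in all of them. This is essentially the paper's trick with $m=2$.

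For (iii), your factorization $x=yt$ with $y\in\mathfrak m_V\setminus(x)$ is shorter than the paper's argument. The paper picks two intermediate values in the value group to produce $y$ with $y^2\mid x$. That lands directly in $\mathfrak m_V^{[2]}$, but by (i) this makes no difference.

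For (iv), the paper shows $\operatorname{Spec}(V/\mathfrak M)$ has exactly two points. It then uses the standard fact that a one-dimensional valuation ring with principal maximal ideal is a DVR. You prove Noetherianity directly: each nonzero element is a unit times $\bar t^{\,n}$ with $n$ maximal, using $\bigcap_n(\bar t^{\,n})=\mathfrak M/\mathfrak M=0$. This is more self-contained.
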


\begin{proof}
    \autoref{lem:intersection-ideal-powers-valuation-ring.i} It is enough to show $I^n \subseteq I^{[n]}$. If $i \in I^n$, then clearly there exists a finitely generated ideal $J \subset I$ such that $i \in J^n$. But $J$ must be principal, and so $i \in J^n = J^{[n]} \subset I^{[n]}$.

    \autoref{lem:intersection-ideal-powers-valuation-ring.ii} Let $\mathcal{I} \coloneqq \bigcap_{n \in \mathbf{N}} I^n$. Since $I$ is a proper ideal, so is $\mathcal{I}$. It is enough to show that $\mathcal{I}$ is radical. Let $x \in V$ be such that $x^m \in \mathcal{I}$ for some $m \geq 1$. Then for all $n \in \mathbf{N}$, $x^m \in (I^n)^m$. Hence, for all $n \in \mathbf{N}$, there exists a finitely generated, hence principal, ideal $J \subset I^n$ such that $x^m \in J^m$. Hence, there exists $i \in I^n$ such that $i^m | x^m$. Then $i | x$ because $m\nu(i) \leq m\nu(x)$ implies $\nu(i) \leq \nu(x)$. Consequently, $x \in I^n$ for all $n \in \mathbf{N}$, that is, $x \in \mathcal{I}$.

    \autoref{lem:intersection-ideal-powers-valuation-ring.iii} Since $\fram_V$ is not finitely generated, $\fram_V \neq (0)$. Let $x \in \fram_V \setminus \{0\}$. Let $\Gamma$ be the value group of the associated valuation $\nu$. Since $\fram_V$ is not finitely generated, there exists $\alpha \in \Gamma$ such that $0 < \alpha < \nu(x)$. Otherwise, $\nu(x)$ would be the smallest element of $\Gamma$ that is $> 0$, forcing $\fram_V$ to be generated by $x$. For a similar reason, there must exist $\beta \in \Gamma$ such that $0 < \beta < \min\{\alpha,\nu(x) -\alpha\}$. Let $y \in \fram_V$ be such that $\nu(y) = \beta$. Then $\nu(y^2) = 2\beta < \alpha + \nu(x) - \alpha = \nu(x)$, and so $y^2|x$. Then $x \in \fram_V^2$. Since $x$ was an arbitrary non-zero element of $\fram_V$, this shows $\fram_V = \fram_V^2$.

    \autoref{lem:intersection-ideal-powers-valuation-ring.iv} This was observed in \cite[\S 6.5]{DattaSmithValuations} for characteristic $p > 0$ valuation rings. The proof, however, is a simple consequence of the previous parts.  By \autoref{lem:intersection-ideal-powers-valuation-ring.ii} we know $\mathfrak M$ is a prime ideal of $V$, so $V/\mathfrak{M}$ is a valuation ring. Also, $\mathfrak M \neq \fram_V$ because $\fram_V$ is finitely generated and hence principal. Therefore, $V/\mathfrak M$ has a principal maximal ideal. Let $\frq \in \Spec(V)$. If $\frq \neq \fram_V$, then for all $n \in \mathbb{N}$, $\frq \subseteq \fram_V^n$ because any two ideals of a valuation ring are comparable. Consequently, $\frq \subseteq \mathfrak M$. Thus, $\Spec(V/\mathfrak M)$ has only two prime ideals, and so, it is a valuation ring of Krull dimension $1$ with a principal maximal ideal, i.e., it is a DVR.
\end{proof}

We are now ready to prove the main result of this subsection.

\begin{proposition}
    \label{prop:uniformly-F-compatible-valuation-rings}
Let $(V,\fram_V,\kappa_V)$ be a valuation ring of prime characteristic $p > 0$. Then any $\frp \in \Spec(V)$ such that $\frp \neq \fram_V$ is uniformly $F$-compatible. In addition, if $\fram_V$ is not finitely generated, then $\fram_V$ is also uniformly $F$-compatible.
\end{proposition}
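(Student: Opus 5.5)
The plan is to express each relevant prime as an intersection of Frobenius powers of a suitable ideal. \autoref{lem:intersection-all-Frobenius-powers} then gives uniform $F$-compatibility directly.

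\textbf{Non-maximal primes.} Let $\frp \neq \fram_V$ be a prime and pick $x \in \fram_V \setminus \frp$. We will show
\[
\frp = \bigcap_{e \in \mathbf{N}} (x)^{[p^e]} = \bigcap_{e} x^{p^e}V.
\]
For the inclusion $\subseteq$, take $y \in \frp$. Since $x^{p^e} \notin \frp$ and ideals of $V$ are totally ordered, we get $x^{p^e}V \not\subseteq \frp$. Hence $\frp \subseteq x^{p^e}V$, so $x^{p^e} \mid y$ for every $e$. For the inclusion $\supseteq$, the intersection $\bigcap_e x^{p^e}V = \bigcap_n x^nV$ is a prime ideal $\mathfrak{Q}$ by \autoref{lem:intersection-ideal-powers-valuation-ring}\autoref{lem:intersection-ideal-powers-valuation-ring.ii}. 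It does not contain $x$, since $x \notin x^2V$ when $x$ is a nonzero nonunit; note $x \neq 0$ because $\frp \ni 0$ and $x \notin \frp$. By the first inclusion $\frp \subseteq \mathfrak{Q}$. We still need the reverse containment, and this is not automatic: $\mathfrak{Q}$ is the largest prime not containing $x$, which may be strictly larger than $\frp$. So the choice of $x$ must be adapted to $\frp$.

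\textbf{Fixing the choice of $x$.} I would argue instead that $\frp$ is an intersection of primes of the form $\mathfrak{Q}_x := \bigcap_n x^nV$. Indeed,
\[
\frp = \bigcap_{x \notin \frp} \mathfrak{Q}_x .
\]
The inclusion $\subseteq$ was shown above. For $\supseteq$, take $y \notin \frp$. If $y$ is a unit it lies in no $\mathfrak{Q}_x$. Otherwise $y$ is a nonzero nonunit not in $\frp$, and $y \notin y^2V \supseteq \mathfrak{Q}_y$, so $y \notin \mathfrak{Q}_y$. Each $\mathfrak{Q}_x$ is uniformly $F$-compatible by \autoref{lem:intersection-all-Frobenius-powers}, and intersections preserve this property by \autoref{lem:uniformly-F-compatible-properties}(1). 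Hence $\frp$ is uniformly $F$-compatible. The index set $\{x \notin \frp\}$ is nonempty because $\frp \neq \fram_V$ (we may restrict to nonunits, or simply use that units give $\mathfrak{Q}_x = V$).

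\textbf{The maximal ideal.} If $\fram_V$ is not finitely generated, \autoref{lem:intersection-ideal-powers-valuation-ring}\autoref{lem:intersection-ideal-powers-valuation-ring.iii} gives $\fram_V = \fram_V^2$, hence $\fram_V = \fram_V^n$ for all $n$. Combined with part \autoref{lem:intersection-ideal-powers-valuation-ring.i}, this yields $\fram_V^{[p^e]} = \fram_V$ for all $e$. Then
\[
\fram_V = \bigcap_e \fram_V^{[p^e]},
\]
which is uniformly $F$-compatible by \autoref{lem:intersection-all-Frobenius-powers}.

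The main subtlety is the first part: a single element $x$ does not recover $\frp$ when $V$ has intermediate primes. Taking the intersection over all $x \notin \frp$ resolves this, and the remaining steps are routine.
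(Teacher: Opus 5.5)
Your proposal is correct and follows essentially the paper's proof. It uses the same decomposition $\frp = \bigcap_{a \notin \frp}\bigcap_e (a^{p^e})$ together with \autoref{lem:intersection-all-Frobenius-powers} and closure under intersections, and it treats a non-finitely generated $\fram_V$ identically via $\fram_V = \fram_V^{[p^e]}$. The only difference is minor. For the reverse inclusion the paper uses that the intersection is prime and rules out every prime $\frq \supsetneq \frp$. Your elementwise observation $y \notin y^2V \supseteq \mathfrak{Q}_y$ is slightly more direct and does not need primality of the $\mathfrak{Q}_x$ at all.
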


\begin{proof}
    If $\fram_V$ is not finitely generated, then $\fram_V = \bigcap_{e \in \mathbf{N}} \fram_V^{[p^e]}$ by \autoref{lem:intersection-ideal-powers-valuation-ring}. Hence, $\fram_V$ is uniformly $F$-compatible by \autoref{lem:intersection-all-Frobenius-powers}.

    Now suppose $\frp$ is a non-maximal prime ideal of $V$. Define 
    \[\Sigma \coloneqq \{a \in V \colon a \in \fram_V \setminus \frp\}.\]
    Note $\Sigma \neq \emptyset$ because $\frp \neq \fram_V$. It is enough to show that 
    \begin{equation}
        \label{eq:uniformly-F-compatible}
    \frp = \bigcap_{a \in \Sigma} \bigcap_{e \in \mathbf{N}} (a^{p^e}).
    \end{equation}
    Indeed, $\bigcap_{e \in \mathbf{N}} (a^{p^e})$ is uniformly $F$-compatible by \autoref{lem:intersection-all-Frobenius-powers}, while arbitrary intersections of uniformly $F$-compatible ideals are uniformly $F$-compatible by \autoref{lem:uniformly-F-compatible-properties}.
    If $a \notin \frp$, then $a^{p^e} \notin \frp$ for all $e \in \mathbf{N}$. Thus, $\frp \subset (a^{p^e})$ for all $e \in \mathbf{N}$ by comparability of ideals of a valuation ring. This shows that
    \[\frp \subseteq \bigcap_{a \in \Sigma} \bigcap_{e \in \mathbf{N}} (a^{p^e}).\]
    Note that for all $a \in \Sigma$, $\bigcap_{e \in \mathbf{N}} (a^{p^e})$ is a prime ideal by \autoref{lem:intersection-ideal-powers-valuation-ring}. Since $\Spec(V)$ is totally ordered by inclusion, $\bigcap_{a \in \Sigma} \bigcap_{e \in \mathbf{N}} (a^{p^e})$ is also prime. Thus, to show \autoref{eq:uniformly-F-compatible}, it is enough to show that if $\frq \in \Spec(V)$ satisfies $\frp \subsetneq \frq$, then $\frq \neq \bigcap_{a \in \Sigma} \bigcap_{e \in \mathbf{N}} (a^{p^e})$. By hypothesis, there exists $a \in \frq\setminus\frp$. Thus, $a \in \Sigma$, and it is enough to show that $\frq \nsubseteq \bigcap_{e \in \mathbf{N}} (a^{p^e})$, or equivalently, that $\frq \neq \bigcap_{e \in \mathbf{N}} (a^{p^e})$ (the equivalence holds because $a \in \frq$). If $\frq = \bigcap_{e \in \mathbf{N}} (a^{p^e})$, then for all $e \in \mathbf{N}$, $(a^{p^e}) \subseteq \frq \subseteq (a^{p^e})$, i.e., $\frq = (a^{p^e})$. Hence, for all $e \in \mathbf{N}$, $(a^{p^e}) = (a^{p^0}) = (a)$. But this is impossible because $V$ is a domain and $a$ is not a unit.
\end{proof}

As a consequence of the definition of uniform $F$-compatibility, we then have:

\begin{corollary}
    \label{cor:Frobenius-splitting-compatible}
    Let $(V,\fram_V,\kappa_V)$ be a valuation ring of characteristic $p > 0$. Suppose $V$ is Frobenius split. Then for every non-maximal prime ideal $\frp$ of $V$, $V/\frp$ is also a Frobenius split valuation ring.
\end{corollary}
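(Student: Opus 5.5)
By \autoref{prop:uniformly-F-compatible-valuation-rings}, every non-maximal prime $\frp$ of $V$ is uniformly $F$-compatible. So the plan is to take a Frobenius splitting of $V$ and push it down to $V/\frp$ through the commutative square displayed after \autoref{def:uniformly-F-compatible-ideal}.

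First I fix a splitting $\varphi \colon F_*V \to V$, that is, a $V$-linear map with $\varphi(F_*1) = 1$. Uniform $F$-compatibility of $\frp$ gives $\varphi(F_*\frp) \subseteq \frp$. Hence $\varphi$ induces a map $\overline{\varphi} \colon F_*(V/\frp) \to V/\frp$ with $\pi \circ \varphi = \overline{\varphi} \circ F_*\pi$.

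Next I check that $\overline{\varphi}$ is a splitting of $V/\frp$. It is $V/\frp$-linear: for $\bar r \in V/\frp$ and $\bar s \in F_*(V/\frp)$, we have $\bar r \cdot \bar s = \overline{r^p s}$, and $\varphi(F_*(r^ps)) = r\varphi(F_*s)$ reduces modulo $\frp$ to $\bar r\,\overline{\varphi}(\bar s)$. It is well defined because its kernel condition is exactly $\varphi(F_*\frp) \subseteq \frp$. Finally, $\overline{\varphi}(F_*\bar 1) = \pi(\varphi(F_*1)) = \bar 1$.

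It remains to note that $V/\frp$ is a valuation ring. It is a domain because $\frp$ is prime. Its ideals are images of ideals of $V$ containing $\frp$, so they are totally ordered by inclusion, and a domain whose ideals are totally ordered is a valuation ring. It is also nonzero, being the quotient by a proper ideal.

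There is no real obstacle here. The only substantive input is the uniform compatibility of non-maximal primes, which is already \autoref{prop:uniformly-F-compatible-valuation-rings}. The rest is routine verification that a compatible map descends to the quotient. The hypothesis that $\frp$ is non-maximal is needed only to invoke that proposition. When $\frp = \fram_V$, the quotient is the field $\kappa_V$, which would need separate treatment, and the statement excludes that case.
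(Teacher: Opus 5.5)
Your proposal is correct and follows the paper's proof exactly. You fix a splitting $\varphi$, invoke \autoref{prop:uniformly-F-compatible-valuation-rings} to get $\varphi(F_*\frp)\subseteq\frp$, and descend $\varphi$ to a splitting $\overline{\varphi}\colon F_*(V/\frp)\to V/\frp$; you simply write out the linearity, unitality, and ``$V/\frp$ is a valuation ring'' checks that the paper treats as routine.
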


\begin{proof}
    Fix a Frobenius splitting $\varphi \colon F_*V \to V$. Since $\frp$ is uniformly $F$-compatible by \autoref{prop:uniformly-F-compatible-valuation-rings}, we get an induced Frobenius splitting $\overline{\varphi} \colon F_*(V/\frp) \to V/\frp$. The fact that $V/\frp$ is a valuation ring is well known.
\end{proof}

\section{A ramification-theoretic characterization of Frobenius splitting}

Our goal is to show the following result.

\begin{theorem}
    \label{thm:Frobenius-splitting-Abhyankar-equivalence}
    Let $k$ be an $F$-finite field of characteristic $p > 0$ and $K$ be a finitely generated field extension of $k$. Let $\nu$ be a valuation of $K/k$ with valuation ring $(\O_\nu,\fram_\nu,\kappa_\nu)$ and value group $\Gamma_\nu$. Let $\nu^p$ denote the restriction of $\nu$ to the subfield $K^p$. Then the following are equivalent:
    \begin{enumerate}[(i)]
        \item $\O_\nu$ is Frobenius split.\label{thm:Frobenius-splitting-Abhyankar-equivalence.i}
        \item $\nu$ is an Abhyankar valuation of $K/k$.\label{thm:Frobenius-splitting-Abhyankar-equivalence.ii}
        \item $[\Gamma_\nu\colon p\Gamma_\nu][\kappa_\nu\colon\kappa_\nu^p] = [K\colon K^p]$.\label{thm:Frobenius-splitting-Abhyankar-equivalence.iii}
        \item $\nu/\nu^p$ has no ramification defect.\label{thm:Frobenius-splitting-Abhyankar-equivalence.iv}
    \end{enumerate}
\end{theorem}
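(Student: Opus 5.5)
The plan is to close the cycle of implications by using what is already available, so that only one implication needs new work. The equivalences \autoref{thm:Frobenius-splitting-Abhyankar-equivalence.ii}$\Leftrightarrow$\autoref{thm:Frobenius-splitting-Abhyankar-equivalence.iii}$\Leftrightarrow$\autoref{thm:Frobenius-splitting-Abhyankar-equivalence.iv} are exactly \autoref{thm:Abhyankar-valuations-char-p}. The implication \autoref{thm:Frobenius-splitting-Abhyankar-equivalence.ii}$\Rightarrow$\autoref{thm:Frobenius-splitting-Abhyankar-equivalence.i} is \autoref{thm:Frobenius-split-Abhyankar-valuations}. What remains is \autoref{thm:Frobenius-splitting-Abhyankar-equivalence.i}$\Rightarrow$\autoref{thm:Frobenius-splitting-Abhyankar-equivalence.iii}: a Frobenius split valuation ring of an $F$-finite field is defectless over $\nu^p$. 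I would prove this for an arbitrary $F$-finite field $K$ and a valuation $\nu$ of finite rank. Finite rank holds in the function field case by the remarks after \autoref{def:Abhyankar-valuation-function-field}.

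\textbf{Step 1: reduction to rank one, by induction on the rank of $\nu$.} Let $\frp$ be the largest non-maximal prime of $\O_\nu$.
\begin{enumerate}[(a)]
\item The localization $(\O_\nu)_\frp$ is Frobenius split, since localizing a splitting gives a splitting. It is the valuation ring of a coarsening $\nu_1$ of $\nu$ of smaller rank.
\item By \autoref{cor:Frobenius-splitting-compatible}, $\O_\nu/\frp$ is a Frobenius split valuation ring of rank one. Its fraction field is the residue field $\kappa_{\nu_1}$ of $(\O_\nu)_\frp$.
\item The field $\kappa_{\nu_1}$ is $F$-finite, because $[\kappa_{\nu_1}:\kappa_{\nu_1}^p]\le [K:K^p]$ by the Lemma of Ostrowski.
\end{enumerate}
The value group and residue field of $\nu$ are built from those of $\nu_1$ and of the induced valuation $\bar\nu$ on $\kappa_{\nu_1}$. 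Concretely, there is an exact sequence $0\to\Gamma_{\bar\nu}\to\Gamma_\nu\to\Gamma_{\nu_1}\to 0$, and $\kappa_\nu=\kappa_{\bar\nu}$. From this, the invariants multiply:
\[
[\Gamma_\nu:p\Gamma_\nu][\kappa_\nu:\kappa_\nu^p]=[\Gamma_{\nu_1}:p\Gamma_{\nu_1}]\,[\Gamma_{\bar\nu}:p\Gamma_{\bar\nu}][\kappa_{\bar\nu}:\kappa_{\bar\nu}^p].
\]
The value groups are torsion-free, so the snake lemma applied to multiplication by $p$ gives the needed multiplicativity of $[\,\cdot:p\,\cdot\,]$. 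If $\nu_1$ and $\bar\nu$ are both defectless, the right-hand side equals $[K:K^p]$. This reduces everything to the rank-one case.

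\textbf{Step 2: the rank-one cases.} If $\Gamma_\nu$ is discrete, we are done by \autoref{thm:Frobenius-split-discrete-valuations}. If $\Gamma_\nu$ is rank one and not discrete, then $\fram_\nu$ is not finitely generated. So $\fram_\nu=\fram_\nu^2$, and by \autoref{prop:uniformly-F-compatible-valuation-rings} the ideal $\fram_\nu$ is uniformly $F$-compatible. Every $\varphi\in\Hom_{\O_\nu}(F_*\O_\nu,\O_\nu)$ therefore induces a map $F_*\kappa_\nu\to\kappa_\nu$. A splitting induces a nonzero, indeed surjective, such map.

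\textbf{Step 3: the main obstacle.} I would argue by contradiction: assuming a nontrivial defect, I would show that every $\varphi\in\Hom_{\O_\nu}(F_*\O_\nu,\O_\nu)$ has $\varphi(F_*\O_\nu)\subseteq\fram_\nu$, so that no splitting can exist.
\begin{enumerate}[(a)]
\item Extend $\varphi$ to a $K^p$-linear functional on $K$.
\item Choose elements whose values represent $\Gamma_\nu/p\Gamma_\nu$ and whose residues form a $p$-basis of $\kappa_\nu$. Their monomial products are $K^p$-linearly independent, and they are valuation-independent, meaning the value of a combination is the minimum of the values of its terms.
\item A nontrivial defect means these products do not span $K$. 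So there is an element $z$ that is immediate over their span: its distance to the span is a supremum in the rank-one group $\Gamma_\nu$ that is not attained.
\item Write $z=s_i+t_i$ with $s_i$ in the span and $\nu(t_i)$ increasing to that supremum.
\item Use $\fram_\nu=\fram_\nu^2$ and the rank-one, Archimedean structure of $\Gamma_\nu$ to scale $z$ by elements of $K^p$ of suitable values. This should show that $\varphi(F_*1)$, together with the values of $\varphi$ on the basis elements, has value bounded below by a positive quantity.
\end{enumerate}
This approximation estimate is where I expect the real difficulty. If the direct estimate resists, I would first try to show that in the defect case $\Hom_{\O_\nu}(F_*\O_\nu,\O_\nu)\subseteq\fram_\nu\cdot\Hom_{K^p}(K,K)$, using the fact that the immediate part contributes no unit-valued functionals.
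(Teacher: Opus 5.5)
Your outer structure is sound and tracks the paper. The equivalences (ii)$\Leftrightarrow$(iii)$\Leftrightarrow$(iv) and the implication (ii)$\Rightarrow$(i) are quoted exactly as the paper does. Your Step~1 peels off the rank-one quotient $\O_\nu/\frp$ (Frobenius split by \autoref{cor:Frobenius-splitting-compatible}), localizes at $\frp$, and multiplies the indices using torsion-freeness. This is essentially the paper's induction, which runs on $[\Gamma_\nu:p\Gamma_\nu]$ and uses $\frp=\bigcap_n\fram_\nu^n$ when $\fram_\nu$ is principal.

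The genuine gap is Step~3, the case where $\fram_\nu$ is not finitely generated. This case is the whole new content of the theorem, and what you give there is a plan (``this should show \dots''), not an argument. Items (c)--(e) never produce a lower bound for the value of $\varphi$. The immediate-element and Archimedean considerations are also not what makes this case work.

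The missing idea is that uniform $F$-compatibility of $\fram_\nu$, which you record in Step~2 but never exploit, forces every Frobenius splitting to be valuation-nondecreasing. Suppose $\sigma\colon\O_\nu\to\O_\nu^p$ is a splitting and $\nu(\sigma(x))<\nu(x)$ for some $x\neq0$. Then $u=x/\sigma(x)\in\fram_\nu$, and $\O_\nu^p$-linearity gives $\sigma(x)=\sigma(\sigma(x)u)=\sigma(x)\sigma(u)$. So $\sigma(u)=1$, contradicting $\sigma(\fram_\nu)\subseteq\fram_\nu$; this is \autoref{lem:rank-1-lemma}. Writing elements of $K$ as $x/y^p$ with $x,y\in\O_\nu$, the $K^p$-linear extension $\sigma\colon K\to K^p$ then satisfies $\nu(\sigma(z))\ge\nu(z)$ for all $z\in K$.

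Now take your $a_i$ and $b_j$ from (b). The map $K\to(K^p)^{\oplus ef}$, $\alpha\mapsto(\sigma(a_ib_j\alpha))_{i,j}$, is injective, which is \autoref{lem:field-theory}:
\begin{enumerate}
\item For $\alpha\neq0$, choose $i$ with $\nu(a_i\alpha)\in p\Gamma_\nu$ and $\beta\in K^p$ with $\nu(\beta)=\nu(a_i\alpha)$. Then $c=a_i\alpha/\beta$ is a unit.
\item The map $\sigma$ induces a nonzero $\kappa_\nu^p$-linear map $\kappa_\nu\to\kappa_\nu^p$, and the residues of the $cb_j$ form a $\kappa_\nu^p$-basis of $\kappa_\nu$. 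Hence some $\sigma(cb_j)$ is a unit.
\item Therefore $\sigma(a_ib_j\alpha)=\beta\,\sigma(cb_j)\neq0$.
\end{enumerate}
This gives $[K:K^p]\le ef$, hence equality, directly and in any rank. No contradiction argument through the defect, no reduction to rank one, and no approximation estimate is needed.
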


We have \autoref{thm:Frobenius-splitting-Abhyankar-equivalence.i}$\impliedby$ \autoref{thm:Frobenius-splitting-Abhyankar-equivalence.ii}$\Longleftrightarrow$ \autoref{thm:Frobenius-splitting-Abhyankar-equivalence.iii} $\Longleftrightarrow$ \autoref{thm:Frobenius-splitting-Abhyankar-equivalence.iv} by \autoref{thm:Frobenius-split-Abhyankar-valuations} and \autoref{thm:Abhyankar-valuations-char-p}. While it suffices to show \autoref{thm:Frobenius-splitting-Abhyankar-equivalence.i}$\implies$ \autoref{thm:Frobenius-splitting-Abhyankar-equivalence.ii}, we will actually show that \autoref{thm:Frobenius-splitting-Abhyankar-equivalence.i} is equivalent to \autoref{thm:Frobenius-splitting-Abhyankar-equivalence.iii} in a non-function field setting as well. This key result is established in \autoref{thm:F-split-F-finite-no-defect}. Another consequence of \autoref{thm:F-split-F-finite-no-defect} is that one now has a purely valuation-theoretic proof of \autoref{thm:Frobenius-splitting-Abhyankar-equivalence} that does not use the local uniformization of Abhyankar valuations \cite{KnafKuhlmannAbhyankar,CutkoskyAbhyankar}.

The following lemma draws inspiration from \cite[Lemma~3.0.4]{DattaFrobeniusSplittingValuations}. Recall our convention that if $\nu$ is a valuation of a field $K$, then we extend it to a function on all of $K$ by assigning $\nu(0) = \infty$. Thus, $\nu(0) > \nu(x)$ for all $x \in K^\times$.

\begin{lemma}
    \label{lem:rank-1-lemma}
    Let $K$ be a field of characteristic $p > 0$. Let $\nu$ be a valuation of $K$ with valuation ring $(\O_\nu,\fram_\nu,\kappa_\nu)$. Assume the following two conditions hold:
    \begin{enumerate}[(a)]
        \item $\fram_\nu$ is not a finitely generated ideal, and
        \item $\O_\nu$ is Frobenius split.
    \end{enumerate}
    If $\sigma \colon \O_\nu \to \O_\nu^p$ is a Frobenius splitting, then for all $x \in \O_\nu \setminus \{0\}$,
    \[\nu(\sigma(x)) \geq \nu(x).\]
\end{lemma}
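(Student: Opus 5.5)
The idea is to combine $O^p_\nu$-linearity of $\sigma$ with the uniform $F$-compatibility of $\fram_\nu$, which holds because $\fram_\nu$ is not finitely generated. Throughout, I will identify the splitting $\sigma \colon \O_\nu \to \O_\nu^p$ (an $\O_\nu^p$-linear map with $\sigma(1) = 1$) with the $\O_\nu$-linear map $\varphi \colon F_*\O_\nu \to \O_\nu$ given by $\varphi(F_*y) = \sigma(y)^{1/p}$. This makes sense because Frobenius is injective on the domain $\O_\nu$, so $F \colon \O_\nu \to \O_\nu^p$ is an isomorphism.

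First step. By hypothesis (a) and \autoref{prop:uniformly-F-compatible-valuation-rings}, $\fram_\nu$ is uniformly $F$-compatible, so $\varphi(F_*\fram_\nu) \subseteq \fram_\nu$. Translating through the identification above, $\sigma(\fram_\nu) \subseteq \fram_\nu^{[p]} \cap \O_\nu^p \subseteq \fram_\nu$. In particular $1 \notin \sigma(\fram_\nu)$. This is the only place where hypothesis (a) enters.

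Second step. Argue by contradiction. Fix $x \in \O_\nu \setminus \{0\}$ and suppose $\nu(\sigma(x)) < \nu(x)$. Then $\sigma(x) \neq 0$, since otherwise $\nu(\sigma(x)) = \infty$. Put $s \coloneqq \sigma(x) \in \O_\nu^p \setminus \{0\}$ and $y \coloneqq x/s \in K$. Then $\nu(y) = \nu(x) - \nu(s) > 0$, so $y \in \fram_\nu$, and $x = s\,y$ with $s \in \O_\nu^p$ and $y \in \O_\nu$. Since $\sigma$ is $\O_\nu^p$-linear,
\[
s = \sigma(x) = \sigma(s y) = s\,\sigma(y).
\]
Cancelling the nonzero $s$ in the domain $\O_\nu$ gives $\sigma(y) = 1$ with $y \in \fram_\nu$. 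This contradicts the first step, so $\nu(\sigma(x)) \geq \nu(x)$.

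The only delicate point is the translation between compatibility of the map $\varphi \colon F_*\O_\nu \to \O_\nu$ and the containment $\sigma(\fram_\nu) \subseteq \fram_\nu$. Since $\sigma(y) = \varphi(F_*y)^p$ and $\varphi(F_*y) \in \fram_\nu$ whenever $y \in \fram_\nu$, this is immediate. Everything else is formal, so I expect no real obstacle.
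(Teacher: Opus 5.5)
Your proof is correct and is essentially the paper's argument. Uniform $F$-compatibility of the non-finitely-generated ideal $\fram_\nu$ gives $\sigma(\fram_\nu)\subseteq\fram_\nu$. Then $\O_\nu^p$-linearity applied to $x=\sigma(x)\cdot\bigl(x/\sigma(x)\bigr)$ forces $\sigma(x/\sigma(x))=1$ for an element of $\fram_\nu$, a contradiction; your explicit translation between $\sigma$ and $\varphi\colon F_*\O_\nu\to\O_\nu$ just spells out a step the paper states without comment.
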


\begin{proof}
    Since $\fram_\nu$ is not finitely generated, \autoref{prop:uniformly-F-compatible-valuation-rings} implies that $\fram_\nu$ is uniformly $F$-compatible. That is, $\sigma(\fram_\nu) \subseteq \fram_\nu \cap \O_\nu^p$. Assume there exists $x \in \O_\nu \setminus \{0\}$ such that $\nu(\sigma(x)) < \nu(x)$. Then $\sigma(x) \neq 0$ and
    \[u \coloneqq \frac{x}{\sigma(x)} \in \fram_\nu.\]
    Hence $\sigma(x) = \sigma(\sigma(x)u) = \sigma(x)\sigma(u)$, where the second equality follows because $\sigma(x) \in \O_\nu^p$ and $\sigma$ is $\O_\nu^p$-linear. Since $\sigma(x) \neq 0$, we get $\sigma(u)  = 1$, which is a contradiction because $u \in \fram_\nu$ and $\sigma(\fram_\nu) \subseteq \fram_\nu \cap \O_\nu^p$.
\end{proof}

\begin{lemma}
    \label{lem:field-theory}
    Let $F/K$ be a finite field extension. Let $w$ be a valuation of $F$ and $\nu$ denote the restriction of $w$ to $K$. Let $(\O_w, \fram_w, \kappa_w)$ be the valuation ring of $w$ and $(\O_\nu, \fram_\nu, \kappa_\nu)$ be the valuation ring of $\nu$. Suppose there exists a $K$-linear map $\sigma \colon F \to K$ such that 
    \begin{enumerate}[(1)]
        \item $\sigma(1) = 1$, and
        \item $\nu(\sigma(x)) = w(\sigma(x)) \geq w(x)$ for all $x \in F \setminus \{0\}$.
    \end{enumerate}
Then $w$ is the unique extension (up to equivalence) of $\nu$ to $F$ and we have $[F \colon K] = [\Gamma_w \colon \Gamma_\nu][\kappa_w \colon \kappa_\nu]$.
\end{lemma}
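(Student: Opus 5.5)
The plan is to combine the fundamental inequality (\autoref{thm:ram-rel-inequality}) with a reverse inequality extracted from $\sigma$. If $w=w_1,w_2,\dots,w_n$ are the pairwise inequivalent extensions of $\nu$ to $F$, then
\[\sum_{i=1}^n [\Gamma_{w_i}\colon\Gamma_\nu][\kappa_{w_i}\colon\kappa_\nu]\le [F\colon K].\]
So it suffices to prove $[F\colon K]\le e f$, where $e=[\Gamma_w\colon\Gamma_\nu]$ and $f=[\kappa_w\colon\kappa_\nu]$. The term for $w$ alone then already equals $[F\colon K]$. Since every term is a positive integer, this forces $n=1$, which gives uniqueness of the extension, and it also gives the equality $[F\colon K]=ef$. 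Both numbers are finite because $ef\le [F\colon K]$.

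\textbf{Step 1: a valuation basis.} Choose $t_1,\dots,t_e\in F^\times$ whose values represent the cosets of $\Gamma_w/\Gamma_\nu$. Choose $b_1,\dots,b_f\in\O_w^\times$ whose residues form a $\kappa_\nu$-basis of $\kappa_w$. Put $u_{ij}=t_ib_j$ and let $E=\sum_{i,j} K u_{ij}$. The standard argument (as in the proof of \autoref{thm:ram-rel-inequality}) shows that
\[w\Big(\sum a_{ij}u_{ij}\Big)=\min_{i,j} w(a_{ij}u_{ij}).\]
Hence the $u_{ij}$ are $K$-linearly independent and $\dim_K E=ef$. It remains to show $E=F$.

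\textbf{Step 2: an approximation step.} For every $x\in F\setminus E$ there is $y\in E$ with $w(x-y)>w(x)$. To see this, pick $i$ and $a\in K$ with $w(x)=w(at_i)$. Then write the residue of $x/(at_i)$ as $\sum_j \bar c_j\bar b_j$ and take $y=\sum_j a c_j t_i b_j$. Consequently, for $x\notin E$ the set $S_x=\{w(x-y): y\in E\}$ has no maximum: if $w(x-y_0)$ were maximal, applying the step to $x-y_0\notin E$ would give a larger value.

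\textbf{Step 3: use $\sigma$ to force a maximum.} This is where hypotheses (1) and (2) enter, and I expect it to be the main obstacle. Suppose $E\ne F$. The form $(a,b)\mapsto\sigma(ab)$ is nondegenerate on $F$: if $\sigma(aF)=0$ with $a\ne0$, then $\sigma(1)=0$, contradicting (1). So every $K$-linear functional on $F$ has the form $\sigma(z\,\cdot)$ for some $z\in F$. Pick a functional vanishing on $E$ and nonzero at some $x\notin E$, normalized so that its value at $x$ is $1$; write it as $\sigma(z\,\cdot)$. Then for every $y\in E$, hypothesis (2) gives
\[0=\nu(1)=\nu(\sigma(z(x-y)))\ge w(z)+w(x-y).\]
So $S_x$ is bounded above by $-w(z)$. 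This alone does not yield a maximum when $\Gamma_w$ has higher rank, so the argument must be refined. The idea is to turn the bound around. Starting from a $y$ with $w(x-y)$ close to the bound, use the freedom in choosing $z$ (translating by elements of $E$ and rescaling by $K^\times$) together with the valuation-basis decomposition to show that $w(x-y)=-w(z)$ is actually attained. Concretely, I would try to prove the coordinate estimate: for $y=\sum a_{ij}u_{ij}\in E$ and suitable dual elements $z_{ij}$ with $\sigma(z_{ij}u_{kl})=\delta$, the quantity $\nu(a_{ij})$ is bounded below in terms of $w(y)$. Step 2 produces elements $y$ whose approximations to $x$ keep improving, and such an estimate would force their coordinates to stabilize. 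That contradicts the fact that $S_x$ has no maximum, so $E=F$ and the lemma follows.
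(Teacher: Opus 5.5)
Your reduction to the fundamental inequality and Steps 1 and 2 are fine. Step 3, however, is not a proof, and it is where the content of the lemma lies. The bound $w(x-y)\le -w(z)$ only says that $S_x$ is bounded above. Your proposed repair is a coordinate estimate $\nu(a_{ij})\ge w(z_{ij})+w(y)$ that would force the coordinates of ever better approximants to ``stabilize''. Nothing justifies this. $K$ is an arbitrary valued field, not complete and possibly not discretely valued. Increasing values $w(x-y_n)$ only make the coordinate differences small; nothing forces them to become eventually zero, so no contradiction arises. As written, you have not shown $E=F$, i.e.\ you have not obtained $[F\colon K]\le ef$.

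The missing idea is to apply hypothesis (2) on residue fields rather than to approximations. By (2), $\sigma(\O_w)\subseteq\O_\nu$ and $\sigma(\mathfrak m_w)\subseteq\mathfrak m_\nu$. So $\sigma$ induces a $\kappa_\nu$-linear map $\bar\sigma\colon\kappa_w\to\kappa_\nu$ with $\bar\sigma(1)=1$; in particular $\bar\sigma\neq0$. Now take any $z\in F^\times$, for instance your $z$ with $\sigma(zE)=0$. Choose $i$ with $w(t_iz)\in\Gamma_\nu$ and $\beta\in K^\times$ with $\nu(\beta)=w(t_iz)$. Then $c=t_iz/\beta\in\O_w^\times$, and the residues $\overline{cb_j}$ again form a $\kappa_\nu$-basis of $\kappa_w$. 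Hence $\bar\sigma(\overline{cb_j})\neq0$ for some $j$, and so $\sigma(zt_ib_j)=\beta\,\sigma(cb_j)\neq0$. Thus $\sigma(zE)\neq0$ for every nonzero $z$, which immediately contradicts your choice of $z$. More directly, $z\mapsto(\sigma(zt_ib_j))_{i,j}$ is an injective $K$-linear map $F\to K^{\oplus ef}$, giving $[F\colon K]\le ef$. This is the paper's argument. It makes Step 2 and the nondegeneracy discussion unnecessary, and your reduction to the fundamental inequality then gives both the equality and the uniqueness of $w$.
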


\begin{proof}
    The uniqueness of $w$ follows from the equality $[F \colon K] = [\Gamma_w \colon \Gamma_\nu][\kappa_w \colon \kappa_\nu]$ and \autoref{thm:ram-rel-inequality}. We therefore prove this equality. We also have $[\Gamma_w \colon \Gamma_\nu][\kappa_w \colon \kappa_\nu] \leq [F \colon K]$ by \autoref{thm:ram-rel-inequality}. Let $e \coloneq [\Gamma_w \colon \Gamma_\nu]$ and $f \coloneq [\kappa_w \colon \kappa_\nu]$. It is enough to show that we have an injective $K$-linear map $F \to K^{\oplus ef}$. Let $\{x_1, \ldots, x_e\} \subseteq F \setminus \{0\}$ be such that $\{w(x_1), \ldots, w(x_e)\}$ is a complete set of representatives of the cosets of $\Gamma_\nu$ in $\Gamma_w$. Let $\{y_1, \ldots, y_f\} \subseteq \O_w^\times$ be such that $\{\overline{y_1}, \ldots, \overline{y_f}\}$ is a basis of $\kappa_w$ over $\kappa_\nu$, where $\overline{y_i}$ denotes the image of $y_i$ in $\kappa_w$. Consider the $K$-linear map
    \begin{align*}
    \Phi \colon F &\to K^{\oplus ef}\\
    \alpha &\mapsto (\sigma(x_iy_j\alpha))_{1 \leq i \leq e, 1 \leq j \leq f}.
    \end{align*}
    We will show $\Phi$ is injective. Suppose $\alpha \in F \setminus \{0\}$. Choose $x_i$ such that $w(x_i) + \Gamma_\nu = w (\alpha^{-1}) + \Gamma_\nu$. Then $w(x_i\alpha) \in \Gamma_\nu$. Choose $\beta \in K \setminus \{0\}$ such that $w(x_i\alpha) = \nu(\beta) = w(\beta)$. Then $c \coloneqq (x_i\alpha)/\beta \in \O_w^\times$. Let $\overline{c}$ denote the image of $c$ in $\kappa_w$. Since multiplication by $\overline{c}$ is a $\kappa_\nu$-linear automorphism of $\kappa_w$, $\{\overline{cy_i} \colon i = 1, \ldots, f\}$ is a basis of $\kappa_w$ over $\kappa_\nu$. 

    Condition (2) implies that $\sigma(\O_w) \subseteq \O_\nu$ and $\sigma(\fram_w) \subseteq \fram_\nu$. Thus, $\sigma$ induces a $\kappa_\nu$-linear splitting $\overline{\sigma} \colon \kappa_w \to \kappa_\nu$ of $\kappa_\nu \hookrightarrow \kappa_w$. In particular, $\overline{\sigma}$ is non-zero. Thus, there exists $j$ such that $\overline{\sigma}(\overline{cy_j}) \neq 0$. Then $\sigma(cy_j) \neq 0$, and hence $\sigma(x_iy_j\alpha) = \sigma(\beta cy_j) = \beta\sigma(cy_j) \neq 0$. Hence $\Phi(\alpha) \neq 0$, and we conclude that $\Phi$ is injective.
\end{proof}

Let $K$ be an $F$-finite field of characteristic $p > 0$ and $\nu$ be a valuation of $K$ and $\nu^p$ denote the restriction of $\nu$ to $K^p$. Note that $\nu$ is the unique extension of $\nu^p$ to $K$ and we have by \autoref{thm:ram-rel-inequality} that
\[
[\Gamma_\nu \colon p\Gamma_\nu][\kappa_\nu \colon \kappa_\nu^p] \leq [K \colon K^p].
\]
The next lemma provides a finer analysis of the extension $\nu/\nu^p$ when equality holds in the above inequality, that is, when $\nu/\nu^p$ does not have ramification defect. The result can be compared with the proof of \cite[Claim~3.0.3]{DattaFrobeniusSplittingValuations}, which shows something similar under the assumption that $\nu$ admits a particularly nice local uniformization. The lemma below does not require such an assumption.

\begin{lemma}
    \label{lem:valuation-minimizer-basis}
    Let $K$ be an $F$-finite field of characteristic $p > 0$ and $\nu$ be a valuation of $K$ with restriction $\nu^p$ to $K^p$. Let $(\O_\nu, \fram_\nu, \kappa_\nu)$ be the valuation ring of $\nu$. Suppose that $\nu/\nu^p$ is defectless, i.e., $[\Gamma_\nu \colon p\Gamma_\nu][\kappa_\nu \colon \kappa_\nu^p] = [K \colon K^p]$. Fix $a_1, \dots, a_e\in K\setminus\{0\}$ so that $\nu(a_1), \dots, \nu(a_e)$ are representatives for the distinct cosets  of  $\Gamma_\nu/p\Gamma_\nu$. Fix $b_1, \dots, b_f \in \O_\nu^\times$ so that their images in $\kappa_\nu$ are a basis for $\kappa_\nu$ over $\kappa_\nu^p$. Then for all $1\leq i \leq e$, $1\leq j \leq f$ and $c_{ij}^p\in K^p$, we have
\begin{equation}
    \label{eq:valuation-minimizer}
  \nu \left(\sum_{i,j}c_{ij}^pa_i b_j\right)
= \min\{\nu(c_{ij}^pa_i b_j)\}_{i,j}.
\end{equation}
Consequently, $\{a_ib_j\}_{i,j}$ forms a basis for $K$ over $K^p$.
\end{lemma}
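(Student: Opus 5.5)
The plan is to prove \autoref{eq:valuation-minimizer} by the standard separation argument: the terms of the sum either have distinct values, or they have equal values and then they are controlled in the residue field. Linear independence then follows, and the defectless hypothesis gives the basis statement.

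First consider the terms $c_{ij}^p a_i b_j$ with $c_{ij} \neq 0$. Their values are
\[\nu(c_{ij}^pa_ib_j) = p\nu(c_{ij}) + \nu(a_i),\]
since $b_j$ is a unit. This value lies in the coset $\nu(a_i) + p\Gamma_\nu$. So two terms with different indices $i$ have distinct values, because the cosets $\nu(a_i)+p\Gamma_\nu$ are pairwise distinct.

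Let $\gamma$ be the minimum of the term values; it is finite unless all $c_{ij}=0$, in which case the statement is trivial. Since the values are distinct across different $i$, the minimum is attained only for a single index $i_0$. Group the sum as
\[\sum_i a_i S_i, \qquad S_i \coloneqq \sum_j c_{ij}^p b_j.\]
It then suffices to show two things:
\begin{enumerate}[(1)]
\item $\nu(S_i) = \min_j \nu(c_{ij}^p)$ for each $i$;
\item the values $\nu(a_iS_i)$ lie in distinct cosets of $p\Gamma_\nu$.
\end{enumerate}
Given these, the strict ultrametric equality for terms of pairwise distinct values yields \autoref{eq:valuation-minimizer}.

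For (2), note that once (1) holds, $\nu(S_i) \in p\Gamma_\nu$, so $\nu(a_iS_i)$ lies in $\nu(a_i)+p\Gamma_\nu$.

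The main step is (1). Choose $j_0$ with $\nu(c_{ij_0})$ minimal and divide by $c_{ij_0}^p$. This gives
\[S_i/c_{ij_0}^p = \sum_j d_j^p b_j, \qquad d_j = c_{ij}/c_{ij_0} \in \O_\nu,\]
with $d_{j_0} = 1$. Reduce modulo $\fram_\nu$: the image is $\sum_j \overline{d_j}^p\,\overline{b_j}$. This is a $\kappa_\nu^p$-linear combination of the basis $\overline{b_j}$ with coefficient $1$ at $j_0$, so it is nonzero. Hence $S_i/c_{ij_0}^p$ is a unit, and $\nu(S_i) = p\nu(c_{ij_0})$, which is (1).

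For the final claim, \autoref{eq:valuation-minimizer} shows that $\sum c_{ij}^pa_ib_j=0$ forces every $c_{ij}=0$. So the $ef$ elements $a_ib_j$ are $K^p$-linearly independent. By the defectless hypothesis, $ef = [K:K^p]$, so they form a basis of $K$ over $K^p$.

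I expect no serious obstacle. The only care needed is the residue-field step in (1) and handling the zero coefficients, which are excluded from the minimum since $\nu(0)=\infty$.
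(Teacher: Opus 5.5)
Your proposal is correct and follows essentially the same route as the paper: coset separation across different indices $i$, and the residue-field argument within a fixed row (dividing by $c_{ij_0}^p$ so that $d_{j_0}=1$). The only cosmetic difference is that you apply the residue argument to every row sum $S_i$ and then use that the values $\nu(a_iS_i)$ are pairwise distinct, whereas the paper applies it only to the minimizing row $i_0$ and shows that every remaining nonzero term has value strictly greater than $\alpha$.
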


\begin{proof}
The final assertion about $\{a_ib_j\}_{i,j}$ forming a basis for $K$ over $K^p$ follows immediately from \autoref{eq:valuation-minimizer} because the latter implies that $\{a_ib_j\}_{i,j}$ is a linearly independent subset of $K$ over $K^p$ of size $ef = [K : K^p]$. Hence, it remains to establish \autoref{eq:valuation-minimizer}.

Let $x=\sum_{i,j}c_{ij}^pa_i b_j$, where $c_{ij}\in K$.    Set 
$\alpha=\min\{\nu(c_{ij}^{p}a_i b_j)\}_{i,j}.$ We want to show that $\nu(x)=\alpha$. We may assume that not all the $c_{ij}$ are zero as otherwise $\nu(x) = \nu(c_{ij}^pa_ib_j) = \infty$ for all $i, j$ and there is nothing to show.

Choose a pair of indices 
$(i_0,j_0)$  attaining the minimum $\alpha$. 
Write $x=  y + z$ where 
\[
 y=\sum_j c_{i_0j}^{p}a_{i_0}b_j, \qquad z=\sum_{i\ne i_0,j}c_{ij}^{p}a_i b_j. \]
 Furthermore, for both $y$ and $z$ we only consider those terms of the sum for which $c_{ij}^p \neq 0$. We will show that
\begin{enumerate}[(a)]
    \item $\nu(z)>\alpha$, and
    \item $\nu(y)=\alpha$,
\end{enumerate}
 from which it will follow that $\nu(x) = \alpha$, establishing the lemma.

 For (a), since each $b_j$ is a unit in $\O_\nu$, we have
\[
\nu(c_{ij}^pa_i b_j)
=p\nu(c_{ij})+\nu(a_i).
\]
This means that $\nu(c_{ij}^pa_i b_j)$  is in the same $p\Gamma_\nu$ equivalence class as $\nu(a_i)$ but in a different equivalence class from $ \nu(c_{k\ell}^pa_kb_\ell)$ if $i\neq k$ and $c_{ij}^p \neq 0 \neq c_{k\ell}^p$, since the $p\Gamma_\nu$  classes of $\nu(a_i)$ and $\nu(a_k)$ are distinct by our choice of $a_i$'s. Thus 
if $i\neq k$ and $c_{ij}^p \neq 0 \neq c_{k\ell}^p$, we have 
\begin{equation}
    \label{eq:distinct-valuation-classes}
\nu(c_{ij}^pa_i b_j) \neq  \nu(c_{k\ell}^pa_k b_\ell),
\end{equation} 
and so, every nonzero term of $z$  has valuation strictly greater than $\nu(c_{i_0j_0}^pa_{i_0}b_{j_0}) =\alpha$, and so $\nu(z)>\alpha$.

For (b), set $d_j \coloneqq \frac{c_{i_0j}}{c_{i_0j_0}}$, for each $j=1, \dots, f$, and observe that 
 $\nu (d_j)\geq 0$ by our choice of $(i_0, j_0)$.  Now $\nu(y) $ is
\begin{equation}
    \label{D}
\nu(y) =
\nu\left(\sum_{j=1}^fc_{i_0j_0}^pd_j^pa_{i_0} b_j\right) = 
\nu\left(c_{i_0j_0}^p a_{i_0}\sum_{j=1}^fd_j^p b_j\right) = \nu(a_{i_0}c_{i_0j_0}^p) + \nu\left(\sum_{j=1}^fd_j^pb_j\right).
\end{equation}
Note that all $d_j$ and $b_j$ are in $\O_\nu$. So computing the residue of $\sum_{j=1}^f d_j^pb_j$ modulo $\fram_\nu$, we have 
\begin{equation}\label{2}
\overline{\sum_j d_j^pb_j}
=\sum_{j=1}^f\overline{d_j}^p\,\overline{b_j} \neq 0
\end{equation}
since the $\overline{b_j}$ were chosen to be linearly independent over $\kappa_\nu^p$ and $d_{j_0}^p= \left( \frac{c_{i_0j_0}}{c_{i_0j_0}}\right)^p =1$.
So $\sum_{j=1}^fd_j^pb_j\in \O_\nu^\times$, and its value is zero. Glancing back at \autoref{D}, we conclude that $\nu(y) = \alpha$. This completes the proof of the lemma.
\end{proof}

The next result is the technical heart of this paper and provides a ramification-theoretic characterization of Frobenius split valuation rings of $F$-finite fields.

\begin{theorem}
    \label{thm:F-split-F-finite-no-defect}
    Let $K$ be an $F$-finite field of characteristic $p > 0$. Let $\nu$ be a valuation of $K$ with valuation ring $(\O_\nu, \fram_\nu, \kappa_\nu)$ and value group $\Gamma_\nu$. Then $\O_\nu$ is Frobenius split if and only if $\nu/\nu^p$ is defectless, i.e., 
    \begin{equation}
        \label{eq:F-split-implies-no-ramification}
    [\Gamma_\nu \colon p\Gamma_\nu][\kappa_\nu \colon \kappa_\nu^p] = [K \colon K^p].
    \end{equation}
\end{theorem}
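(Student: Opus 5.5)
The plan is to prove the two implications separately. The converse direction (defectless $\Rightarrow$ Frobenius split) should follow directly from \autoref{lem:valuation-minimizer-basis}. The forward direction should follow from \autoref{lem:rank-1-lemma} together with \autoref{lem:field-theory}, after a reduction that handles a finitely generated maximal ideal.

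\emph{Defectless $\Rightarrow$ Frobenius split.} In \autoref{lem:valuation-minimizer-basis} I may choose $a_1 = 1$ (as representative of the trivial coset) and $b_1 = 1$ (extended to a basis of $\kappa_\nu$ over $\kappa_\nu^p$). Then $\{a_ib_j\}$ is a $K^p$-basis of $K$. Let $\sigma \colon K \to K^p$ be the $K^p$-linear projection onto the coefficient $c_{11}^p$ of $a_1b_1 = 1$, so $\sigma(1) = 1$. For $x = \sum c_{ij}^p a_ib_j \in \O_\nu$, \autoref{eq:valuation-minimizer} gives $0 \le \nu(x) = \min_{i,j} \nu(c_{ij}^pa_ib_j) \le \nu(c_{11}^p)$. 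Hence $\sigma(x) \in K^p \cap \O_\nu = \O_\nu^p$, and $\sigma|_{\O_\nu}$ is a Frobenius splitting.

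\emph{Frobenius split $\Rightarrow$ defectless, case $\fram_\nu$ not finitely generated.} Take a splitting $\sigma \colon \O_\nu \to \O_\nu^p$. Since $K = \O_\nu \otimes_{\O_\nu^p} K^p$, $\sigma$ extends to a $K^p$-linear map $\sigma \colon K \to K^p$ by $\sigma(y/c^p) = \sigma(y)/c^p$. \autoref{lem:rank-1-lemma} gives $\nu(\sigma(y)) \ge \nu(y)$ for $y \in \O_\nu$. Scaling by $c^p$ propagates this inequality to all of $K$, because $\nu(\sigma(y)/c^p) = \nu(\sigma(y)) - p\nu(c) \ge \nu(y/c^p)$. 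Applying \autoref{lem:field-theory} with $F = K$ over the base field $K^p$ then yields $[K:K^p] = [\Gamma_\nu : p\Gamma_\nu][\kappa_\nu:\kappa_\nu^p]$.

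\emph{Case $\fram_\nu$ finitely generated, hence principal.} If $\O_\nu$ is a DVR, use \autoref{thm:Frobenius-split-discrete-valuations}. Otherwise set $\mathfrak M = \bigcap_n \fram_\nu^n$, which is a non-maximal prime, and $V/\mathfrak M$ is a DVR by \autoref{lem:intersection-ideal-powers-valuation-ring}. By \autoref{cor:Frobenius-splitting-compatible}, $V/\mathfrak M$ is Frobenius split, and $V_{\mathfrak M}$ is Frobenius split because splittings localize. So $\nu$ is the composite of the coarsening $\nu_1$ with ring $V_{\mathfrak M}$ and the discrete residue valuation $\bar\nu$ on $\kappa(\mathfrak M)$. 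The field $\kappa(\mathfrak M)$ is $F$-finite, since $[\kappa(\mathfrak M):\kappa(\mathfrak M)^p] \le [K:K^p]$.

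For the composite I would use two facts. First, the exact sequence $0 \to \Gamma_{\bar\nu} \to \Gamma_\nu \to \Gamma_{\nu_1} \to 0$ of torsion-free groups gives $[\Gamma_\nu:p\Gamma_\nu] = [\Gamma_{\bar\nu}:p\Gamma_{\bar\nu}][\Gamma_{\nu_1}:p\Gamma_{\nu_1}]$. Second, the residue field of $\bar\nu$ is $\kappa_\nu$. Together with the general inequalities $[\Gamma:p\Gamma][\kappa:\kappa^p] \le [K:K^p]$ at each level, this shows the defect of $\nu/\nu^p$ is the product of the defects of $\nu_1$ and $\bar\nu$. The factor for $\bar\nu$ is $1$ by \autoref{thm:Frobenius-split-discrete-valuations}. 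For $\nu_1$, if $\mathfrak M V_{\mathfrak M}$ is not finitely generated, the previous case applies. If it is principal, I would iterate the argument on $V_{\mathfrak M}$, which has strictly smaller rank, so induction on rank finishes whenever $\O_\nu$ has finite Krull dimension.

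The main obstacle I expect is this iteration when the rank is infinite. My first attempt would be to note that $[\Gamma_\nu:p\Gamma_\nu] \le [K:K^p] < \infty$, so at most finitely many of the successive discrete quotients contribute. Descending along the chain of primes $\mathfrak M_1 \supset \mathfrak M_2 \supset \cdots$ produced by the iteration, I would then pass to the intersection prime. There the maximal ideal of the localization should fail to be finitely generated, so \autoref{lem:rank-1-lemma} applies.
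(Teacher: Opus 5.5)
You have the same ingredients as the paper: the converse via \autoref{lem:valuation-minimizer-basis}, the non-finitely-generated case via \autoref{lem:rank-1-lemma} and \autoref{lem:field-theory}, and the peeling step at $\mathfrak M=\bigcap_n\fram_\nu^n$. In that step you split the DVR $\O_\nu/\mathfrak M$ by \autoref{cor:Frobenius-splitting-compatible}, localize the splitting, and use multiplicativity of $[\Gamma:p\Gamma]$ and of the defect. The gap is the induction measure.

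Induction on rank only covers finite Krull dimension, as you note. Infinite rank with principal $\fram_\nu$ does occur on $F$-finite fields. For example, take $K=L(t)$ with $L$ perfect carrying a valuation of infinite rank, and extend it so that $\nu(t)$ is positive but smaller than every positive value on $L$; then $[K:K^p]=p$. Your patch for this case does not work as written. It assumes the iteration can produce an infinite chain $\mathfrak M_1\supsetneq\mathfrak M_2\supsetneq\cdots$. It then asserts without justification that the localization at $\bigcap_i\mathfrak M_i$ has non-finitely-generated maximal ideal. It would also have to control the defect through infinitely many composites.

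The missing observation is that no discrete quotient is ``non-contributing''. Since $\Gamma_{\bar\nu}\cong\mathbf Z$, your own exact-sequence computation gives $[\Gamma_\nu:p\Gamma_\nu]=p\,[\Gamma_{\nu_1}:p\Gamma_{\nu_1}]$. So each peeling step divides the finite number $[\Gamma_\nu:p\Gamma_\nu]\le[K:K^p]$ by exactly $p$. There are therefore at most $\log_p[K:K^p]$ steps, whatever the rank. The process must stop at a trivial valuation, a DVR, or a valuation whose maximal ideal is not finitely generated.

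The clean formulation is the paper's: induct on $[\Gamma_\nu:p\Gamma_\nu]$ instead of on rank.
\begin{itemize}
\item In the base case $[\Gamma_\nu:p\Gamma_\nu]=1$, either $\nu$ is trivial or $\Gamma_\nu=p\Gamma_\nu\neq0$. In the latter case $\Gamma_\nu$ has no least positive element, so $\fram_\nu$ is not finitely generated, and \autoref{lem:rank-1-lemma} with \autoref{lem:field-theory} applies.
\item The inductive step is exactly your peeling argument applied to $(\O_\nu)_{\mathfrak M}$.
\end{itemize}
With that replacement your proof is complete and is essentially the paper's.
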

 
\begin{proof}
    We begin with the forward implication, that is, we begin by showing that if $\O_\nu$ is Frobenius split then $\nu/\nu^p$ is defectless. First suppose $\fram_\nu$ is not finitely generated. Then by \autoref{lem:rank-1-lemma}, if $\sigma \colon \O_\nu \to \O_\nu^p$ is a Frobenius splitting, we have $\nu(\sigma(x)) \geq \nu(x)$ for all $x \in \O_\nu \setminus \{0\}$. Extend $\sigma$ to a Frobenius splitting $K \to K^p$, which we also denote by $\sigma$. Note that $\O_\nu \otimes_{\O_\nu^p} K^p = K$. Thus, every element of $K\setminus\{0\}$ can be written as a fraction of the form $x/y^p$ with $x \in \O_\nu$ and $y \in \O_\nu \setminus \{0\}$. By $K^p$-linearity of $\sigma$, we have
    \[\nu(\sigma(x/y^p)) = \nu(\sigma(x)/y^p) = \nu(\sigma(x)) - p\nu(y) \geq \nu(x) - p\nu(y) = \nu(x/y^p).\]
    Then by \autoref{lem:field-theory} applied to the finite field extension $K/K^p$ and the valuation $\nu$ of $K$ with restriction $\nu^p$ to $K^p$, we get $[\Gamma_\nu : p\Gamma_\nu][\kappa_\nu : \kappa_\nu^p] = [K : K^p]$.

    In the general case we will proceed by induction on $[\Gamma_\nu \colon p\Gamma_\nu]$, which is finite because by \autoref{thm:ram-rel-inequality} we have $[\Gamma_\nu \colon p\Gamma_\nu][\kappa_\nu : \kappa_\nu^p] \leq [K \colon K^p]$.

    Suppose $[\Gamma_\nu \colon p\Gamma_\nu] = 1$. We may assume $\Gamma_\nu \neq 0$ as otherwise $\nu$ is the trivial valuation with valuation ring $K$, and \autoref{eq:F-split-implies-no-ramification} is automatically satisfied. If $\Gamma_\nu \neq 0$ and $\Gamma_\nu = p\Gamma_\nu$, then $\fram_\nu$ cannot be finitely generated and \autoref{eq:F-split-implies-no-ramification} holds from the case shown above.

    Let $[\Gamma_\nu \colon p\Gamma_\nu] > 1$. We may again assume that $\fram_\nu$ is finitely generated, that is, it is principal. Let 
    \[\frp \coloneqq \bigcap_{n \in \mathbf N} \fram_\nu^n.\]
    Then by \autoref{lem:intersection-ideal-powers-valuation-ring}~\autoref{lem:intersection-ideal-powers-valuation-ring.iv}, $\O_\nu/\frp$ is a DVR. Let $w$ be the valuation of $K$ whose valuation ring is $(\O_\nu)_\frp$ and $\overline{w}$ be the valuation of $\kappa(\frp)$ whose valuation ring is $\O_\nu/\frp$. Note that $\nu$ is the compositum of $w$ and ${\overline{w}}$, and we have a short exact sequence of value groups (see \cite[Proposition~1.11]{VaquieValuations})
    \begin{equation}
        \label{eq:value-groups-ses}
        0 \longrightarrow \Gamma_{\overline{w}} \longrightarrow \Gamma_\nu \longrightarrow \Gamma_w \longrightarrow 0.
    \end{equation}
    We have seen that $\Gamma_{\overline w} \cong \mathbf{Z}$. Also, since $\Gamma_w$ is an ordered abelian group, it is torsion-free and hence a flat $\mathbf{Z}$-module. Therefore the above short exact sequence of value groups is universally exact (\cite[\href{https://stacks.math.columbia.edu/tag/058I}{Tag~058I}, \href{https://stacks.math.columbia.edu/tag/058M}{Tag~058M}]{stacks-project}), or equivalently, $\Gamma_{\overline w} \to \Gamma_\nu$ is a pure map of $\mathbf{Z}$-modules. Thus, 
    \[
    0 \to \Gamma_{\overline{w}}/p\Gamma_{\overline{w}} \to \Gamma_\nu/p\Gamma_\nu \to \Gamma_w/p\Gamma_w \to 0
    \]
    is exact as well. Hence,
    \begin{equation}
        \label{eq:p-index}
        [\Gamma_\nu \colon p\Gamma_\nu] = [\Gamma_{\overline{w}} \colon p\Gamma_{\overline{w}}][\Gamma_w \colon p\Gamma_w] = p[\Gamma_w \colon p\Gamma_w].
    \end{equation}
    In particular, we see that $[\Gamma_w \colon p\Gamma_w] < [\Gamma_\nu \colon p\Gamma_\nu]$. 

    Since $\frp$ is a non-maximal prime ideal of the Frobenius split valuation ring $\O_\nu$, the valuation ring $\O_\nu/\frp$ is Frobenius split by \autoref{cor:Frobenius-splitting-compatible}. Note that $\kappa(\frp)$ is $F$-finite because all valuation rings of $K$ have $F$-finite residue fields by \autoref{thm:ram-rel-inequality} and $\kappa(\frp)$ is the residue field of the valuation ring $(\O_\nu)_\frp$ of $K$. Then by \autoref{thm:Frobenius-split-discrete-valuations} applied to the DVR $\O_\nu/\frp$ with $F$-finite fraction field $\kappa(\frp)$, we get
    \begin{equation}
        \label{eq:rank-1-step}
        [\Gamma_{\overline{w}} \colon p\Gamma_{\overline{w}}][\kappa_\nu \colon \kappa_\nu^p] = [\Gamma_{\overline{w}} \colon p\Gamma_{\overline{w}}][\kappa_{\overline w} \colon \kappa_{\overline w}^p] = [\kappa(\frp) \colon \kappa(\frp)^p].
    \end{equation}

    Since Frobenius splittings are preserved by localization, $(\O_\nu)_\frp$ is also Frobenius split. Hence, because $[\Gamma_w \colon p\Gamma_w] < [\Gamma_\nu \colon p\Gamma_\nu]$, we have by induction that
    \begin{equation}
        \label{eq:induction-step}
        [\Gamma_w \colon p\Gamma_w][\kappa(\frp) \colon \kappa(\frp)^p] = [\Gamma_w \colon p\Gamma_w][\kappa_w \colon \kappa_w^p] = [K \colon K^p].
    \end{equation}
    Combining \autoref{eq:p-index}, \autoref{eq:rank-1-step}, and \autoref{eq:induction-step}, we then obtain $[\Gamma_\nu \colon p\Gamma_\nu][\kappa_\nu \colon \kappa_\nu^p] = [K \colon K^p]$,
    as desired.

    We next show the converse. Let $e \coloneqq [\Gamma_\nu \colon p\Gamma_\nu]$ and $f \coloneqq [\kappa_\nu \colon \kappa_\nu^p]$. Suppose that $ef = [K \colon K^p]$. As in \autoref{lem:valuation-minimizer-basis}, fix $a_1, \dots, a_e\in K\setminus\{0\}$ so that $\nu(a_1), \dots, \nu(a_e)$ are representatives for the distinct cosets  of  $\Gamma_\nu/p\Gamma_\nu$. Also, fix $b_1, \dots, b_f \in \O_\nu^\times$ so that their images in $\kappa_\nu$ are a basis for $\kappa_\nu$ over $\kappa_\nu^p$. We can and will assume that $a_1 = b_1 = 1$. As shown in \autoref{lem:valuation-minimizer-basis}, $\{a_ib_j\}_{i,j}$ is then a basis of $K$ over $K^p$. Let $\sigma: K\rightarrow K^p$ be the projection onto the $a_1b_1=1$ summand. That is, 
\[
  \sigma \left(\sum_{i,j}c_{ij}^pa_i b_j\right)=c_{11}^p.
\]
This is a splitting of the inclusion $K^p\hookrightarrow K$. Restrict $\sigma$  to $\O_\nu$.  Since $\sigma(1)=1$, it suffices to show that $
\sigma(\O_\nu)\subset \O_\nu^p = \O_\nu \cap K^p$ in order to conclude that $\O_\nu$ is Frobenius split. 

Let $x\in \O_\nu$ and write it as a $K^p$-combination of the $a_ib_j$, say $x= \sum_{i,j}c_{ij}^pa_i b_j$. Then by \autoref{lem:valuation-minimizer-basis},
\[
\nu(\sigma(x))
=\nu(c_{11}^p)=\nu(c_{11}^pa_1b_1)
\geq
\min\{\nu(c_{ij}^pa_i b_j)\}_{i,j}
=\nu(x) \geq 0,
\]
with the last equality following from the lemma.
So $\sigma(\O_\nu)\subset \O_\nu \cap K^p = \O_\nu^p$.
\end{proof}

One can now immediately finish the proof of \autoref{thm:Frobenius-splitting-Abhyankar-equivalence}.

\begin{proof}[Proof of \autoref{thm:Frobenius-splitting-Abhyankar-equivalence}]
    As discussed after the statement of \autoref{thm:Frobenius-splitting-Abhyankar-equivalence}, it is enough to show \autoref{thm:Frobenius-splitting-Abhyankar-equivalence.i}$\Longleftrightarrow$\autoref{thm:Frobenius-splitting-Abhyankar-equivalence.iii}, which follows by \autoref{thm:F-split-F-finite-no-defect}.
\end{proof}

Using \autoref{thm:F-split-F-finite-no-defect} one can also deduce a non-function field version of \autoref{thm:Frobenius-splitting-Abhyankar-equivalence} assuming the existence of an excellent local center.

\begin{corollary}
    \label{cor:F-split-implies-excellent-center-Abhyankar}
    Let $K$ be an $F$-finite field of characteristic $p > 0$. Let $\nu$ be a valuation of $K$ with valuation ring $(\O_\nu,\fram_\nu,\kappa_\nu)$. Suppose $\nu$ admits an excellent local center with fraction field $K$. Then the following are equivalent:
    \begin{enumerate}
        \item The valuation ring $\O_\nu$ is Frobenius split.
        \item Any excellent local center of $\nu$ with fraction field $K$ is an Abhyankar center of $\nu$.
    \end{enumerate}
\end{corollary}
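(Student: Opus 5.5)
The plan is to combine \autoref{thm:F-split-F-finite-no-defect} with \autoref{thm:Abhyankar-centers-char-p}. The point is that both Frobenius splitting of $\O_\nu$ and the Abhyankar property of a center $R$ have already been characterized by the single numerical condition
\[
[\Gamma_\nu \colon p\Gamma_\nu][\kappa_\nu \colon \kappa_\nu^p] = [K \colon K^p].
\]
The only extra input needed is that, because $K$ is $F$-finite, an excellent local center with fraction field $K$ is automatically $F$-finite. This holds by \autoref{rem:F-finite-vs-excellent}, so \autoref{thm:Abhyankar-centers-char-p} applies to it.

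For (1) $\Rightarrow$ (2), assume $\O_\nu$ is Frobenius split. By \autoref{thm:F-split-F-finite-no-defect}, the displayed equality holds. Let $R$ be any excellent local center of $\nu$ with fraction field $K$. By \autoref{rem:F-finite-vs-excellent}, $R$ is an $F$-finite Noetherian local domain. Condition (ii) of \autoref{thm:Abhyankar-centers-char-p} holds, so condition (i) holds: $R$ is an Abhyankar center of $\nu$.

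For (2) $\Rightarrow$ (1), we use the hypothesis that $\nu$ admits at least one excellent local center $R$ with fraction field $K$. This is needed so that (2) is not vacuous. By (2), $R$ is an Abhyankar center. Since $R$ is $F$-finite by \autoref{rem:F-finite-vs-excellent}, \autoref{thm:Abhyankar-centers-char-p} (i) $\Rightarrow$ (ii) gives the displayed equality. Then \autoref{thm:F-split-F-finite-no-defect} shows that $\O_\nu$ is Frobenius split.

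There is no real obstacle here. The only step needing care is the excellent-versus-$F$-finite identification, which \autoref{rem:F-finite-vs-excellent} supplies because the fraction field $K$ is $F$-finite. We could even bypass the ``moreover'' clause of \autoref{thm:Abhyankar-centers-char-p}, since we apply that theorem directly to each excellent center.
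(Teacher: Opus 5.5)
Your proposal is correct and follows the paper's proof, which simply combines \autoref{thm:F-split-F-finite-no-defect} with \autoref{thm:Abhyankar-centers-char-p}. You additionally spell out two steps the paper leaves implicit: \autoref{rem:F-finite-vs-excellent} makes any excellent center with $F$-finite fraction field $K$ an $F$-finite center, and the existence hypothesis on an excellent center is what gives the direction (2) $\Rightarrow$ (1).
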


\begin{proof}
    This follows immediately by \autoref{thm:F-split-F-finite-no-defect} and \autoref{thm:Abhyankar-centers-char-p}.
\end{proof}

\begin{remarks}
    {\*}
    \begin{enumerate}
        \item One can have Frobenius split valuation rings that do not admit any excellent local center. For example, if $V$ is a non-trivial perfect valuation ring (i.e., the Frobenius map is an isomorphism), then $V$ is clearly Frobenius split. However, a non-trivial perfect valuation ring cannot have an excellent local center. In fact, $V$ cannot be centered on \emph{any} Noetherian domain with fraction field $K$ because $K$ is perfect. Indeed, we claim that if $R$ is a Noetherian domain with fraction field $K$, then $R = K$. Suppose not. Then $\dim(R) \geq 1$, so let $\frp \in \Spec(R)$ be a height $1$ prime ideal. By the Krull-Akizuki theorem, the integral closure $R'$ of $R_\frp$ in $K$ is a Dedekind domain \cite[\href{https://stacks.math.columbia.edu/tag/00PG}{Tag~00PG}]{stacks-project}. Then $R' = R' \cap K = R' \cap K^p = (R')^p$, where the last equality follows by normality of $(R')^p$. Thus, $R'$ is a perfect Noetherian domain, and hence, it must be a field. But $\dim(R') = \dim(R_\frp) = 1$, which is impossible.
        \item There are interesting examples of valuation rings that are Frobenius split in a non-function field setting. For example, as a consequence of the Hahn-Banach extension property of continuous linear functionals in non-Archimedean functional analysis, one can show that if $(K,|\cdot|)$ is a spherically complete non-Archimedean valued field of characteristic $p > 0$ with valuation ring $\O_K$, then $\O_K$ is Frobenius split. See \cite[\S 5, Page 2509, (15)]{DattaFrobeniusSplittingValuations}. It would be interesting to find general criteria for Frobenius splitting of valuation rings whose ambient fields are not $F$-finite.
    \end{enumerate}
\end{remarks}

\subsection{A ramification-theoretic criterion for \emph{F}-finiteness} Let $K$ be an $F$-finite field and $\nu$ be a valuation of $K$. In \cite[Theorem~4.3.1]{DattaSmithValuations} the authors had \emph{incorrectly} asserted the equivalence of $\nu/\nu^p$ having no ramification defect and the $F$-finiteness of the valuation ring $\O_\nu$. This error was subsequently identified in \cite{DattaSmithValuationsErratum} and numerous characterizations of $F$-finiteness were then given in \cite{DattaSmithValuationsErratum, DattaFrobeniusSplittingValuations, DattaThesis}; for instance, see \autoref{thm:intro-F-finitness-valuation-rings-function-fields}. The current paper rectifies our mistake by showing that $\nu/\nu^p$ having no ramification defect is equivalent to the Frobenius splitting of $\O_\nu$ rather than its $F$-finiteness. However, thanks to \autoref{thm:finiteness-integral-closure} one can also give a purely ramification-theoretic criterion of $F$-finiteness. This appeared implicitly in the results of \cite{DattaSmithValuationsErratum,DattaFrobeniusSplittingValuations,DattaThesis} but we now state the result explicitly.

\begin{theorem}
    \label{thm:F-finiteness-ramification-theoretic}
    Let $K$ be an $F$-finite field of characteristic $p > 0$. Let $\nu$ be a valuation of $K$ with valuation ring $(\O_\nu,\fram_\nu,\kappa_\nu)$ and value group $\Gamma_\nu$. Let $\nu^p$ denote the restriction of $\nu$ to the subfield $K^p$ of $K$. Then the following are equivalent:
    \begin{enumerate}[(i)]
        \item $\O_\nu$ is $F$-finite.
        \item $\nu/\nu^p$ is defectless and $\epsilon(\nu|\nu^p) = [\Gamma_\nu\colon p\Gamma_\nu]$, where $\epsilon(\nu|\nu^p)$ is the initial index of $\nu/\nu^p$.
    \end{enumerate}
\end{theorem}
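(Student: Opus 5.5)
The plan is to reduce the statement to \autoref{thm:finiteness-integral-closure}, applied to the finite field extension $K/K^p$ and the valuation $\nu^p$ of $K^p$. The valuation ring of $\nu^p$ is $\O_\nu^p$, with value group $p\Gamma_\nu$ and residue field $\kappa_\nu^p$. By \autoref{eg:ramification-defect-char-p}, $\nu$ is the unique extension of $\nu^p$ to $K$, so the complete system of extensions in \autoref{thm:finiteness-integral-closure} is the single valuation $\nu$ (that is, $n=1$ and $w_1 = \nu$). The ambient field $K$ is $F$-finite, so $[K:K^p] < \infty$, and the theorem applies.

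The first step is to identify the integral closure $B$ of $\O_\nu^p$ in $K$ with $\O_\nu$. Every $x \in \O_\nu$ satisfies $T^p - x^p$ over $\O_\nu^p$, so $\O_\nu \subseteq B$. Conversely, any valuation ring is integrally closed, and the integral closure of a valuation ring in a finite extension is the intersection of the valuation rings of all its extensions. Here the only extension is $\nu$, so $B = \O_\nu$. One can also argue directly: if $b \in B$, then $b^p \in K^p$ is integral over $\O_\nu^p$, and $\O_\nu^p$ is integrally closed in $K^p$, so $b^p \in \O_\nu^p$ and hence $\nu(b) \geq 0$.

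Next, $F$-finiteness of $\O_\nu$ means that $F_*\O_\nu$ is a finitely generated $\O_\nu$-module. Via the identification of Frobenius with the inclusion $\O_\nu^p \hookrightarrow \O_\nu$, this is exactly the statement that $\O_\nu = B$ is module finite over $\O_\nu^p$. So (i) is equivalent to condition (i) of \autoref{thm:finiteness-integral-closure}.

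Condition (ii) of \autoref{thm:finiteness-integral-closure} now reads
\[
[\Gamma_\nu \colon p\Gamma_\nu][\kappa_\nu \colon \kappa_\nu^p] = [K\colon K^p] \quad\text{and}\quad \epsilon(\nu|\nu^p) = [\Gamma_\nu \colon p\Gamma_\nu].
\]
By \autoref{eg:ramification-defect-char-p}, the first equation is precisely the statement that $\nu/\nu^p$ is defectless, so this is condition (ii) of our statement.

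There is no serious obstacle. The only points needing care are the identification $B = \O_\nu$ and matching the normalizations of value groups, namely that $\Gamma_{\nu^p} = p\Gamma_\nu$ as subgroups of $\Gamma_\nu$. The second point also ensures that the initial index is computed inside $\Gamma_\nu$ relative to $p\Gamma_\nu$, as in \autoref{def:initial-index}.
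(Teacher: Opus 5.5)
Your proposal is correct and is essentially the paper's proof. Both apply \autoref{thm:finiteness-integral-closure} to $K/K^p$, with $\nu$ the unique extension of $\nu^p$ and $\O_\nu$ the integral closure of $\O_\nu^p$ in $K$, so that $F$-finiteness of $\O_\nu$ is exactly condition (i) there; the only cosmetic difference is that the paper deduces uniqueness of the extension from $\O_\nu$ being a valuation ring integral over $\O_\nu^p$, whereas you take uniqueness as given and verify $B=\O_\nu$ directly.
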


\begin{proof}
    Note that the valuation ring of $\nu^p$ is $\O_\nu^p$ and $\O_\nu$ is clearly the integral closure of $\O_\nu^p$ in $K$. Since $\O_\nu$ is a valuation ring, this implies that $\nu$ is the \emph{unique} extension of $\nu^p$ to $K$ (this also follows by the pure inseparability of $K/K^p$). The theorem is now a straightforward application of \autoref{thm:finiteness-integral-closure}.
\end{proof}

\begin{remark}
    The equality $\epsilon(\nu|\nu^p) = [\Gamma_\nu \colon p\Gamma_\nu]$ clearly holds if $\nu$ is discrete, that is, if $\Gamma_\nu \cong \mathbf{Z}$. Thus, for discrete valuations $\nu$ of $F$-finite fields, $F$-finiteness of the DVR $\O_\nu$ is equivalent to $\nu/\nu^p$ having no ramification defect, which, by \autoref{thm:F-split-F-finite-no-defect}, is equivalent to $\O_\nu$ being Frobenius split.
\end{remark}

As a consequence of \autoref{thm:F-finiteness-ramification-theoretic} and \cite{DattaSmithValuationsErratum} (see also \cite[\S~5, (2), (3)]{DattaFrobeniusSplittingValuations}), if $\O_\nu$ is $F$-finite and $\nu$ is non-trivial, then 
\[
\epsilon(\nu|\nu^p) = [\Gamma_\nu \colon p\Gamma_\nu] = 
\begin{cases}
    1 \text{ \hspace{4mm} if } \fram_\nu \text{ is not finitely generated,}\\
    p \text{ \hspace{4mm} if } \fram_\nu \text{ is finitely generated}.
\end{cases}
\]
This observation can be used to deduce the following generalization of \autoref{thm:intro-F-finitness-valuation-rings-function-fields}.

\begin{corollary}\cite[\S5, (7)]{DattaFrobeniusSplittingValuations}
    \label{cor:valuations-with-F-finite-Noetherian-centers}
    Let $K$ be an $F$-finite field of characteristic $p > 0$. Let $\nu$ be a non-trivial valuation of $K$ centered on an excellent (equivalently, $F$-finite Noetherian) local domain $(R,\fram_R,\kappa_R)$ with $\Frac(R) = K$. Then the following are equivalent:
    \begin{enumerate}[(i)]
        \item $\O_\nu$ is $F$-finite.
        \item $\O_\nu$ is a DVR and $R$ is an Abhyankar center of $\O_\nu$.
    \end{enumerate}
\end{corollary}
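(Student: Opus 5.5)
We prove the two implications separately, using \autoref{thm:F-finiteness-ramification-theoretic} together with the displayed dichotomy for $\epsilon(\nu|\nu^p)$ to translate $F$-finiteness into valuation-theoretic data. The Abhyankar condition is then handled by \autoref{thm:Abhyankar-centers-char-p}.

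For (i)$\Rightarrow$(ii), assume $\O_\nu$ is $F$-finite. By \autoref{thm:F-finiteness-ramification-theoretic}, $\nu/\nu^p$ is defectless. Hence \autoref{thm:Abhyankar-centers-char-p}, applied to the excellent, hence $F$-finite (\autoref{rem:F-finite-vs-excellent}), center $R$, shows that $R$ is an Abhyankar center. It remains to see that $\O_\nu$ is a DVR.

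First, $\fram_\nu$ must be finitely generated. Otherwise the dichotomy gives $[\Gamma_\nu\colon p\Gamma_\nu]=1$, so $\Gamma_\nu=p\Gamma_\nu$ is $p$-divisible. Since $R$ is an Abhyankar center, \autoref{thm:Abhyankar-inequality} says $\Gamma_\nu$ is finitely generated, hence free of finite rank. A free group of finite rank with $\Gamma_\nu = p\Gamma_\nu$ is $0$, contradicting non-triviality of $\nu$.

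So $[\Gamma_\nu\colon p\Gamma_\nu]=p$. Since $\Gamma_\nu\cong\mathbf Z^r$ with $r=\dim_{\mathbf Q}(\Gamma_\nu\otimes\mathbf Q)$, we have $p^r=p$, so $r=1$. An ordered group isomorphic to $\mathbf Z$ is order-isomorphic to $\mathbf Z$, so $\O_\nu$ is a DVR.

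For (ii)$\Rightarrow$(i), assume $\O_\nu$ is a DVR and $R$ is an Abhyankar center. By \autoref{thm:Abhyankar-centers-char-p}, $\nu/\nu^p$ is defectless. \autoref{thm:Frobenius-split-discrete-valuations} then gives (iv)$\Rightarrow$(ii) there, so the DVR $\O_\nu$ is $F$-finite. Alternatively, the remark after \autoref{thm:F-finiteness-ramification-theoretic} notes that $\epsilon=[\Gamma_\nu\colon p\Gamma_\nu]$ automatically for discrete valuations, and we can conclude from that theorem.

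The main obstacle is the step ruling out $\fram_\nu$ non-finitely generated. It depends on the cited dichotomy $\epsilon(\nu|\nu^p)=[\Gamma_\nu\colon p\Gamma_\nu]\in\{1,p\}$ and on the finite generation of $\Gamma_\nu$ from Abhyankar equality. If one wanted to avoid citing the dichotomy, we would derive it directly. Elements of $\Gamma_{\nu,\ge0}$ lying below all of $p\Gamma_{\nu,>0}$ are, in the finitely generated case, exactly $\{0,\nu(t),\dots,(p-1)\nu(t)\}$ for a uniformizer-like $t$. In the non-finitely-generated case this set is just $\{0\}$, since $\Gamma_\nu$ then has arbitrarily small positive elements, which can be halved up to $p$-multiples. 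This shows $\epsilon\in\{1,p\}$, and equality with the index forces the stated values.
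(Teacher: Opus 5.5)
Your proof is correct and follows essentially the route the paper intends. $F$-finiteness gives defectlessness and $\epsilon(\nu|\nu^p)=[\Gamma_\nu\colon p\Gamma_\nu]\in\{1,p\}$; \autoref{thm:Abhyankar-centers-char-p} turns defectlessness into $R$ being an Abhyankar center, so $\Gamma_\nu\cong\mathbf Z^r$ with $p^r\in\{1,p\}$, which forces $r=1$ (nontriviality rules out $r=0$). Conversely, the DVR $\O_\nu$ is defectless by \autoref{thm:Abhyankar-centers-char-p} and hence $F$-finite by \autoref{thm:Frobenius-split-discrete-valuations}; your sketched derivation of the dichotomy $\epsilon\in\{1,p\}$ is also sound.
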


\section{A Frobenius split DVR of a non-\textnormal{\textit{F}}-finite function field that is not divisorial}
\label{sec:non-F-finite-F-split-DVR-function-field}

Our goal will be to prove that Frobenius split valuation rings of function fields need not arise from Abhyankar valuations when the function field is \emph{not} $F$-finite.

\begin{theorem}
    \label{thm:F-split-non-Abhyankar-excellent-DVR-function-field}
    Let $k=\mathbf{F}_p(u_n \colon n \in \mathbf{Z}_{>0})$, where the $u_n$ are algebraically independent over $\mathbf{F}_p$. Then there exist algebraically independent elements $t, y$ over $k$ and a discrete (i.e., with value group $\mathbf{Z}$) valuation $\nu$ of $k(t,y)/k$ that satisfies the following properties:
    \begin{enumerate}[(i)]
        \item $\O_\nu$ is a Frobenius split DVR.\label{thm:F-split-non-Abhyankar-excellent-DVR-function-field.i}
        \item $\nu$ is not an Abhyankar (equivalently, divisorial) valuation of $k(t,y)/k$.\label{thm:F-split-non-Abhyankar-excellent-DVR-function-field.ii}
        \item $\O_\nu$ is excellent.\label{thm:F-split-non-Abhyankar-excellent-DVR-function-field.iii}
    \end{enumerate}
\end{theorem}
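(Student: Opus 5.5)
The plan is to realize $k(t,y)$ inside the Laurent series field $k((t))$, using the $t$-adic valuation and an explicit ``coefficientwise'' Frobenius splitting of $k[[t]]$ that can be checked by hand to preserve $K \coloneqq k(t,y)$.

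\textbf{Construction.} Set
\[
s \coloneqq \sum_{n \ge 1} u_n t^n \in k[[t]].
\]
Because the coefficients involve infinitely many independent $u_n$, $s$ is transcendental over $k(t)$. I will argue this by reducing any algebraic relation modulo the finitely many $u_n$ involved in its coefficients, which gives a contradiction. Hence $t \mapsto t$, $y \mapsto s$ embeds $K$ into $k((t))$. Let $\nu$ be the restriction of $\operatorname{ord}_t$ to $K$. Since $\nu(t) = 1$, the value group is $\mathbf Z$. The residue field $\kappa_\nu$ lies between $k$ and the residue field $k$ of $k[[t]]$, so $\kappa_\nu = k$. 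Then $\dim_{\mathbf Q}(\Gamma_\nu \otimes \mathbf Q) + \td{\kappa_\nu/k} = 1 < 2$, so $\nu$ is not Abhyankar, which proves (ii).

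\textbf{Splitting (i).} The field $k$ has $p$-basis $\{u_n\}$. Let $\pi \colon k \to k^p$ be the $k^p$-linear projection onto the coefficient of the basis monomial $1$; concretely, $\pi(u^\beta) = u^\beta$ if $p \mid \beta$ and $\pi(u^\beta) = 0$ otherwise. Define
\[
\sigma\Bigl(\sum_i a_i t^i\Bigr) \coloneqq \sum_{p \mid i} \pi(a_i)\, t^i .
\]
This map is $k^p((t^p))$-linear: each coefficient of a product is a finite sum and $\pi$ is $k^p$-linear. It satisfies $\sigma(1) = 1$, $\sigma(k[[t]]) \subseteq k^p[[t^p]]$, and $\nu(\sigma(x)) \ge \nu(x)$.

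The key claim is $\sigma(K) \subseteq K^p$. Since $K$ has $p$-basis $\{t, y, u_n\}$, it suffices to compute $\sigma(t^i y^j u^\alpha)$ with $0 \le i, j < p$ and $0 \le \alpha_n < p$. Expanding $y^j = \sum c_\gamma u^\gamma t^{\sum n\gamma_n}$ with $|\gamma| = j$, a term survives $\pi$ only if $\alpha + \gamma \equiv 0 \pmod p$. That forces $\gamma_n = p - \alpha_n$ on the support of $\alpha$ and $\gamma_n = 0$ elsewhere (because $|\gamma| = j < p$). So at most one $\gamma$ contributes, and
\[
\sigma(t^i y^j u^\alpha) = c_\gamma\, u^{\alpha+\gamma}\, t^{\,i+\sum n\gamma_n} \quad\text{or}\quad 0,
\]
with the nonzero case occurring only when $p \mid i + \sum n\gamma_n$. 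That element is visibly in $K^p$. By $K^p$-linearity, $\sigma(K) \subseteq K^p$. Then
\[
\sigma(\O_\nu) \subseteq K^p \cap k[[t]] = \O_\nu^p,
\]
where the last equality holds because $\O_\nu^p$ is the valuation ring of $\nu^p$ on $K^p$. So $\sigma$ restricts to a Frobenius splitting of $\O_\nu$.

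\textbf{Excellence (iii).} For a DVR it suffices that $\widehat{K} = k((t))$ is separable over $K$, equivalently that $K^{1/p}$ and $k((t))$ are linearly disjoint over $K$. I would prove this by showing that the $p$-basis monomials $t^i y^j u^\alpha$ of $K$ stay linearly independent over $k((t))^p = k^p((t^p))$. Take a finite relation and apply the splittings $\sigma(\,\cdot\, t^{i'} y^{j'} u^{\alpha'})$ computed above, together with a triangularity argument that orders terms by $\alpha$ and then by $t$-degree. I expect this independence argument to be the main obstacle, because the $y$-powers mix the $u$-monomials. If it resists, a fallback is to replace $s$ by a sparser series such as $\sum u_n t^{p^{n}+1}$, which makes the leading terms easier to separate.
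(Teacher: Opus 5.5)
\textbf{Verdict.} Parts (i) and (ii) are correct, but part (iii), excellence, is not proved: you reduce it to separability of $k(\!(t)\!)/K$ and then only announce an independence argument. The gap is easy to close.

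\textbf{Parts (i) and (ii).} These differ from the paper only in the choice of series. The paper takes $y=\sum_n u_nt^{pn}\in k\llbracket t^p\rrbracket$. It disposes of the monomials with $i,j\geq 1$ at once because every $t$-exponent is then prime to $p$, and it handles $i=0$ by writing $y=y_N+y'$. Your observation that at most one multi-index $\gamma$ survives the projection $\pi$ works for $s=\sum_n u_nt^n$. It gives an explicit value $\sigma(t^iy^ju^\alpha)\in k^p[t^p]$, and I checked that computation.

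Your transcendence sketch should be stated as a specialization. Suppose $P(t,s)=0$ with $P\neq 0$ having coefficients in $\mathbf F_p(u_1,\dots,u_N)[t]$. Sending $u_m\mapsto\lambda_m\in\mathbf F_p$ for $m>N$ then yields infinitely many distinct roots of $P(t,Y)$ in $\mathbf F_p(u_1,\dots,u_N)(\!(t)\!)$, which is impossible. The paper does the same thing with the automorphisms $u_m\mapsto u_m+1$.

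\textbf{The gap in (iii).} Your reduction to linear independence of the monomials $t^iy^ju^\alpha$ over $k(\!(t)\!)^p$ is correct. However, the triangularity argument is never carried out, and you offer a change of series as a fallback. The paper sidesteps all of this by citing that every Frobenius split DVR is excellent \cite[Corollary~7.4.3]{DattaMurayamaFsolidity}.

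Your instinct to test against functionals $\sigma(\,\cdot\,m)$ is the right one; no triangularity or sparser series is needed. The missing idea is that the pairing $(a,b)\mapsto\sigma(ab)$ is nondegenerate on the field $K$. Here is the argument.
\begin{enumerate}
\item Let $L=k(\!(t)\!)$. Let $w_1,\dots,w_d\in K$ be $K^p$-linearly independent with span $W$.
\item The $K^p$-linear map $K\to\operatorname{Hom}_{K^p}(W,K^p)$, $m\mapsto\sigma(\,\cdot\,m)|_W$, is surjective. Otherwise some $0\neq w\in W$ would satisfy $\sigma(wm)=0$ for all $m\in K$, contradicting $\sigma(w\cdot w^{-1})=1$.
\item So there are $m_1,\dots,m_d\in K$ with $\sigma(w_am_b)=\delta_{ab}$.
\item Suppose $\sum_a c_aw_a=0$ with $c_a\in L^p=k^p(\!(t^p)\!)$. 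Apply the $L^p$-linear map $\sigma(\,\cdot\,m_b)\colon L\to L^p$ to get $c_b=0$.
\end{enumerate}
Hence $K$ and $L^p$ are linearly disjoint over $K^p$. Equivalently, $K^{1/p}$ and $L$ are linearly disjoint over $K$, so $L/K$ is separable and $\O_\nu$ is excellent. This uses only that your $\sigma$ is defined $L^p$-linearly on all of $L$ and maps $K$ into $K^p$. It is essentially the DVR case of the result the paper cites, and it makes your route self-contained.
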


We will prove this theorem via a series of intermediate lemmas and constructions.

Note that for $\mathbf{Z}$-valued valuations of function fields, the Abhyankar property is equivalent to the valuation being divisorial. Let $k$ be as in the statement of \autoref{thm:F-split-non-Abhyankar-excellent-DVR-function-field}, and for the indeterminate $t$, set
\[
  y \coloneqq \sum_{n\geq 1}u_n t^{pn}\in k\llbracket t\rrbracket, \hspace{2mm}
  \text{and} \hspace{2mm} K \coloneqq k(t,y)\subseteq k(\!(t)\!).
\]
Let $\nu$ be the restriction to $K$ of the $t$-adic discrete valuation on $k(\!(t)\!)$ whose valuation ring is $k\llbracket t\rrbracket$, and let
\[
  \O_\nu \coloneqq K\cap k\llbracket t\rrbracket
\]
be the valuation ring of $\nu$. 

\begin{lemma}
    \label{lemma:non-Abhyankar}
$\td{K/k} = 2$ and $\nu$ is \emph{not} an Abhyankar valuation of $K/k$.
\end{lemma}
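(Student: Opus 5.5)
Two things must be shown: that $y$ is transcendental over $k(t)$, which gives $\td{K/k}=2$, and that the Abhyankar equality fails for $\nu$.

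\textbf{Transcendence.} Suppose $y$ were algebraic over $k(t)$. Clearing denominators gives a nonzero $P(T,Y)\in k[T,Y]$ with $P(t,y)=0$. The coefficients of $P$ involve only finitely many of the $u_n$, say $u_1,\dots,u_N$, so $P$ has coefficients in $k_0\coloneqq \mathbf F_p(u_1,\dots,u_N)$.

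Let $L$ be the algebraic closure of $k_0(t)$, and set $L_0\coloneqq k_0(t)$. Then $y=y_0+y_1$ with
\[
y_0=\sum_{n\le N}u_nt^{pn}\in L_0, \qquad y_1=\sum_{n>N}u_nt^{pn}.
\]
So $y_1$ would be algebraic over $k_0(t)$. Write $M=\mathbf F_p(u_n:n>N)$. The field $k_0(t)$ is algebraically independent from $M$ over $\mathbf F_p$, and $k_0(t)$ is regular over $\mathbf F_p$. Hence $\td{k_0(t)(M)/k_0(t)}=\infty$, and each $u_n$ with $n>N$ is transcendental over $k_0(t)$.

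Next consider the field $E\coloneqq k_0(t)(y_1)\subseteq k(\!(t)\!)$. One could argue directly that the coefficients $u_n$ of $y_1$ lie in a finitely generated extension of $k_0$, which is impossible. The cleanest route is a specialization argument, and this is the step I expect to be the real obstacle. Suppose $Q(t,y_1)=0$ with $Q$ nonzero over $k_0$ of degree $d$ in $Y$. The coefficients of the power series $Q(t,y_1)$ are polynomials over $k_0$ in the $u_n$ with $n>N$. Since these $u_n$ are independent indeterminates, specializing $u_n\mapsto 0$ for all $n$ except one chosen $u_m$, with $m$ large, gives $Q(t,u_mt^{pm})=0$ in $k_0(u_m)\llbracket t\rrbracket$. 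Here $u_m$ is transcendental over $k_0(t)$, and all specializations $u_m\mapsto c$ are available. From this I would want to conclude that $Q(t,ct^{pm})=0$ identically for infinitely many $m$.

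A simpler version of the same idea uses only the coefficient of $t$-degree bound. If $Q(t,Y)=\sum q_j(t)Y^j$, then for $m$ larger than the degrees of the $q_j$, the term $q_j(t)(u_mt^{pm})^j$ is the only contribution to the coefficient of $u_m^j$. Taking all $u_n$ with $n\neq m$ to $0$ therefore forces every $q_j=0$, a contradiction. I would phrase the whole transcendence step as this degree comparison, treating the $u_n$ as independent variables and comparing monomials in them.

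\textbf{Failure of the Abhyankar equality.} Clearly $\Gamma_\nu=\mathbf Z$, since $\nu(t)=1$ and $\nu$ is the restriction of a $\mathbf Z$-valued valuation. It remains to show $\kappa_\nu=k$. The residue field $\kappa_\nu$ embeds into the residue field $k$ of $k\llbracket t\rrbracket$ and contains $k$, so $\kappa_\nu=k$ and $\td{\kappa_\nu/k}=0$. Therefore
\[
\dim_{\mathbf Q}(\Gamma_\nu\otimes_{\mathbf Z}\mathbf Q)+\td{\kappa_\nu/k}=1+0<2=\td{K/k},
\]
so $\nu$ is not Abhyankar.
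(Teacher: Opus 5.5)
Your argument is correct, but the transcendence step runs differently from the paper's. Both arguments start from the fact that a relation involves only finitely many $u_n$.

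\textbf{Paper's route.} The paper keeps the relation $P(t,y)=0$. For each $m>N$ it applies the automorphism of $k$ given by $u_m\mapsto u_m+1$, extended coefficientwise to $k(\!(t)\!)$. This automorphism fixes the coefficients of $P$, so it produces infinitely many distinct roots $y+t^{pm}$ of $P(t,Y)$, which is impossible.

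\textbf{Your route.} You pass to a nonzero $Q(t,Y)=\sum_j q_j(t)Y^j\in k_0[t][Y]$ with $Q(t,y_1)=0$, for instance $Q(t,Y)=P(t,y_0+Y)$. You observe that $Q(t,y_1)$ lies in $k_0[u_n : n>N]\llbracket t\rrbracket$. You then apply the ring homomorphism sending $u_n\mapsto 0$ for $n>N$, $n\neq m$. This gives
\[
\sum_j q_j(t)\,t^{pmj}u_m^j=0 \quad\text{in } k_0[t,u_m],
\]
which forces $Q=0$ after a single choice of $m$.

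\textbf{What each buys.} The paper's automorphism is defined on all of $k$, so it needs no bookkeeping about where coefficients live. Your specialization is only a homomorphism on the polynomial subring. So you should say explicitly that $Q$ has coefficients in $k_0[t]$. You should also say that only the $u_n$ with $n>N$ are specialized: $u_1,\dots,u_N$ may occur in denominators of elements of $k_0$. In exchange, you get the contradiction directly from one visibly transcendental element rather than by counting roots.

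\textbf{Cleanup.} In a write-up, delete the unused material: the algebraic closure $L$, the regularity remark, and the abandoned specialization $u_m\mapsto c$. Also drop the requirement that $m$ exceed the degrees of the $q_j$. For any $m>N$, the pure monomial $u_m^j$ arises in $Q(t,y_1)$ only from $q_j(t)\,u_m^jt^{pmj}$, with coefficient $1$.

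Your computation of $\Gamma_\nu=\mathbf Z$ and $\kappa_\nu=k$ matches the paper's.
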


\begin{proof}
To show $\td{K/k} = 2$, it is enough to show that $y$ is transcendental over $k(t)$. Let $Y$ be an indeterminate and $\pi \colon k[t,Y] \to k(\!(t)\!)$ be the unique $k$-algebra map that sends $t \mapsto t$ and $Y \mapsto y$.
Suppose that $P(t,y) = \pi(P(t,Y))=0$ for some nonzero $P(t,Y)\in k[t,Y]$.
Then there exists an $N\geq 1$ such that all coefficients of $P(t,Y)$ lie in $\mathbf{F}_p(u_1,\ldots,u_N)$.
For each $m>N$, the automorphism of $k$ sending $u_m$ to $u_m+1$
and fixing all other $u_n$ extends coefficientwise to an automorphism $\sigma$
of $k(\!(t)\!)$ fixing $t$. Then 
\[0 = \sigma(\pi(P(t,Y))) = P(t,y+t^{pm}),\]
where the last equality follows because $\sigma$ fixes the coefficients of $P(t,Y)$. 
Thus, for all $m > N$, $y + t^{pm}$ is a root of the nonzero polynomial $P(t,Y) \in k[t,Y] = k[t][Y]$, which is impossible since the $y + t^{pm}$ are all distinct for distinct $m$. Consequently, $\trdeg{K/k} =2$.

Since $t\in K$, the value group of $\nu$ is $\mathbf{Z}$, so $\O_\nu$ is a discrete valuation ring with uniformizer $t$. Since $(k\llbracket t\rrbracket,(t),k)$ dominates the local $k$-algebra $(\O_\nu,\fram_\nu,\kappa_\nu)$, it follows that $\kappa_\nu = k$.
Hence
\[
  \dim_{\mathbf Q}(\mathbf{Q} \otimes_{\mathbf Z} \Gamma_\nu) +\trdeg{\kappa_\nu/k} = 1+0 < 2=\trdeg{K/k}.
\]
Thus $\nu$ is not an Abhyankar valuation of $K/k$.
\end{proof}

\medskip
\noindent\textbf{A Frobenius splitting of $k(\!(t)\!)$.}
The monomials
\[
  u^\alpha=\prod_{n\geq 1}u_n^{\alpha_n},
  \qquad 0\leq\alpha_n<p,
\]
where $\alpha$ has finite support, form a $k^p$-basis of $k$.
In particular, $[k:k^p]=\infty$.
Define the $k^p$-linear projection
\[
  \phi \colon k\longrightarrow k^p
\]
by $\phi(1)=1$ and $\phi(u^\alpha)=0$ whenever $\alpha$ is not a tuple of all zeros. Extend $\phi$ to a $k^p(\!(t^p)\!)$-linear map 
\begin{align*}
    \Phi \colon k(\!(t)\!)&\longrightarrow k^p(\!(t^p)\!)\\
    \sum_{m\geq m_0}a_m t^m &\longmapsto \sum_{\substack{m\geq m_0\\p\mid m}}\phi(a_m)t^m.
\end{align*}
In other words, $\Phi$ is the composition $k(\!(t)\!) \to k^p(\!(t)\!) \to k^p(\!(t^p)\!)$, where the first map just applies $\phi$ to the coefficients of the Laurent series and the second map is projection onto the basis element $1$ for the canonical basis $\{1,t,\dots,t^{p-1}\}$ of $k^p(\!(t)\!)$ over $k^p(\!(t^p)\!)$.

Let $\nu_t$ be the $t$-adic valuation on $k(\!(t)\!)$. Then it is also clear that for all $z\in k(\!(t)\!)^\times$, we have $\Phi(z) = 0$ or
\begin{equation}\label{eq:valuation-nondecreasing}
  \nu_t(\Phi(z))\geq \nu_t(z).
\end{equation}
Moreover, $\Phi(1) = 1$, so $\Phi$ is a Frobenius splitting of $k(\!(t)\!)$.

\begin{lemma}
    \label{lem:Induced-splitting}
    The map $\Phi$ restricts to a Frobenius splitting of $K$, i.e., $\Phi(K) \subseteq K^p$.
\end{lemma}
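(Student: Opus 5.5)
The plan is to reduce to a finite computation on a spanning set of $K$ over $K^p$, using the fact that $\Phi$ is linear over a field containing $K^p$.

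\emph{Step 1: linearity.} Since $K^p=k^p(t^p,y^p)\subseteq k^p(\!(t^p)\!)$ and $\Phi$ is $k^p(\!(t^p)\!)$-linear, the restriction $\Phi|_K$ is $K^p$-linear.

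\emph{Step 2: a spanning set.} The field $K=k(t,y)$ is generated over $K^p$ by $k$, $t$ and $y$. All three generators are $p$-th roots of elements of $K^p$, so $K=K^p[k,t,y]$. Moreover, $k$ is spanned over $k^p$ by the monomials $u^\alpha$ with $0\le\alpha_n<p$. Hence $K$ is spanned over $K^p$ by the elements
\[
u^\alpha t^i y^j, \qquad 0\le i,j<p,\ \alpha \text{ of finite support with } 0\le \alpha_n<p .
\]
By Step 1, it therefore suffices to show $\Phi(u^\alpha t^i y^j)\in K^p$ for each such element.

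\emph{Step 3: the computation.} Since $y=\sum_n u_nt^{pn}$ involves only exponents of $t$ divisible by $p$, every $t$-exponent occurring in $u^\alpha t^iy^j$ is congruent to $i$ modulo $p$.

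If $i\neq 0$, no exponent is divisible by $p$, so $\Phi(u^\alpha t^i y^j)=0$.

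If $i=0$, expand
\[
u^\alpha y^j=\sum_{n_1,\dots,n_j} u^\alpha u_{n_1}\cdots u_{n_j}\, t^{p(n_1+\cdots+n_j)}.
\]
Every $t$-exponent here is divisible by $p$, so $\Phi$ simply applies $\phi$ to the coefficients. Now $\phi(u^\beta)$ is $u^\beta$ when every exponent $\beta_n$ is divisible by $p$, and is $0$ otherwise; this follows by writing $u^\beta=(u^{\gamma})^p u^{\beta'}$ with $0\le\beta'_n<p$.

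\emph{Step 4: finiteness (the main obstacle).} The key point is that only finitely many terms of this a priori infinite series survive. For a given term, let $c_n=\#\{l : n_l=n\}$, so that $\sum_n c_n=j<p$. The exponent of $u_n$ in $u^\alpha u_{n_1}\cdots u_{n_j}$ is $\alpha_n+c_n$, with $0\le \alpha_n+c_n<2p$.
\begin{itemize}
\item If $\alpha_n=0$ and $c_n>0$, then $0<c_n<p$, so the exponent is not divisible by $p$ and the term is killed by $\phi$.
\item Hence surviving terms have every $n_l$ in the finite support of $\alpha$.
\item For each such $n$, the exponent $\alpha_n+c_n$ must be $0$ or $p$.
\end{itemize}
Collecting terms, $\Phi(u^\alpha y^j)$ is a finite sum of expressions $N\,u^{p\gamma}t^{pM}$ with $N\in\mathbf F_p$ a multinomial count. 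So $\Phi(u^\alpha y^j)\in k^p[t^p]\subseteq K^p$.

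It is worth checking one example, e.g.\ $\Phi(u_1^{p-1}y)=u_1^pt^p$, to confirm the bookkeeping, and to note that the multinomial coefficients cause no trouble since they lie in $\mathbf F_p=\mathbf F_p^p$.

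Combining Steps 1–4 gives $\Phi(K)\subseteq K^p$. Since $\Phi(1)=1$, the restriction $\Phi|_K$ is a Frobenius splitting of $K$.
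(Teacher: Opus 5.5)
Your proof is correct and follows essentially the same route as the paper: reduce by $K^p$-linearity to the elements $u^\alpha t^i y^j$, dispose of $i\neq 0$ via the $t$-exponents, and for $i=0$ observe that any $u_n$ outside the support of $\alpha$ occurs with exponent between $1$ and $j<p$ and is therefore killed by $\phi$. The only difference is bookkeeping. The paper organizes this step through the binomial split $y=y_N+y'$, reducing to $\Phi(u^\alpha y_N^j)$ with $u^\alpha y_N^j\in k[t^p]$, whereas you track the multinomial terms directly. You also rightly note that a spanning set, rather than a basis, suffices.
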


\begin{proof}
    A basis of $K$ over $K^p$ is given by the monomials of the form $u^\alpha t^iy^j$ for a multi-index $\alpha$ as above and $0\leq i,j<p$. It is enough to show that $\Phi(u^\alpha t^iy^j)\in K^p$ for all such monomials. Suppose $j \geq 1$. Since $y = \sum_{n \geq 1} u_nt^{pn}$, the exponent of $t$ in every nonzero term of $y^j$ is divisible by $p$. Thus, if $i \geq 1$, then every nonzero term of $u^\alpha t^i y^j$ has an exponent of $t$ that is not divisible by $p$. Hence, $\Phi(u^\alpha t^i y^j) = 0 \in K^p$ when $i, j \geq 1$. If $j = 0$, then
    \[
    \Phi(u^\alpha t^i y^j) =
    \begin{cases}
          0 \hspace{11mm}\text{ if } i \geq 1,\\
          \phi(u^\alpha) \hspace{3mm}\text{ if } i = 0.
    \end{cases}
    \]
    Consequently, $\Phi(u^\alpha t^i y^j) \in K^p$ when $j = 0$. It remains to check the case where $j \geq 1$ and $i = 0$. Then $u^\alpha t^i y^j = u^\alpha y^j$. Suppose 
    \[u^\alpha = u_1^{\alpha_1}\cdots u_N^{\alpha_N}.\]
    Write $y = y_N + y'$, where $y_N = \sum_{n=1}^N u_n t^{pn}$ and $y' = \sum_{n>N} u_n t^{pn}$. Then 
    \[u^\alpha y^j = \sum_{m=0}^j \binom{j}{m} u^\alpha y_N^m (y')^{j-m}.\]
    For $j - m > 0$, each coefficient of the power series $u^\alpha y^m_N (y')^{j-m}$ is a finite sum of $\mathbf{F}_p$ multiples of monomials, each containing a factor $u_n^e$ with $n > N$ and $1 \leq e \leq j-m < p$. Thus $\phi$ kills each summand in each coefficient, so $\Phi$ kills the power series. Consequently,
    \[
    \Phi(u^\alpha y^j) = \Phi(u^\alpha y_N^j).
    \]
    Since $u^\alpha y_N^j \in k[t^p]$, it is then clear by the definition of $\Phi$ that $\Phi(u^\alpha y_N^j) \in k^p[t^p] \subset K^p$.
\end{proof}

We can now prove \autoref{thm:F-split-non-Abhyankar-excellent-DVR-function-field}.

\begin{proof}[Proof of \autoref{thm:F-split-non-Abhyankar-excellent-DVR-function-field}]
    We choose $t,y$ over $k$ as above and $\nu$ to be the restriction to $k(t,y)$ of the $t$-adic valuation on $k(\!(t)\!)$. By \autoref{lemma:non-Abhyankar} we know that $\nu$ is not Abhyankar, showing that condition \autoref{thm:F-split-non-Abhyankar-excellent-DVR-function-field.ii} is satisfied. In order to show that $\O_\nu$ is Frobenius split, i.e., condition \autoref{thm:F-split-non-Abhyankar-excellent-DVR-function-field.i} is satisfied, by \autoref{lem:Induced-splitting} it is enough to show that $\Phi(\O_\nu) \subseteq \O_\nu^p$. Since $\O_\nu = k\llbracket t \rrbracket \cap K$ and $\O_\nu^p = k^p\llbracket t^p\rrbracket \cap K^p$, we have $\Phi(\O_\nu) \subseteq \Phi(k\llbracket t \rrbracket) \cap \Phi(K) \subseteq k^p\llbracket t^p\rrbracket \cap K^p = \O_\nu^p$. Here $\Phi(k\llbracket t \rrbracket) \subseteq k^p\llbracket t^p\rrbracket$ by \autoref{eq:valuation-nondecreasing}, while $\Phi(K) \subseteq K^p$ by \autoref{lem:Induced-splitting}. Therefore, $\O_\nu$ is Frobenius split. Finally, \autoref{thm:F-split-non-Abhyankar-excellent-DVR-function-field.iii} is satisfied because any Frobenius split DVR is excellent by \cite[Corollary~7.4.3]{DattaMurayamaFsolidity}.
\end{proof}

\begin{remark}
    Let $k$ be an $F$-finite field of characteristic $p > 0$, and consider the Laurent series field $k(\!(t)\!)$ with the $t$-adic valuation. Choose $y \in tk\llbracket t \rrbracket$ such that $\{t,y\}$ is algebraically independent over $k$. Let $\nu$ be the restriction of the $t$-adic valuation on $k(\!(t)\!)$ to the subfield $K = k(t,y)$. The authors showed in \cite{DattaSmithExcellence} that $\O_\nu$ is a non-excellent DVR with no nonzero $\O_\nu^p$-linear maps $\O_\nu \to \O_\nu^p$. Thus, the above example crucially uses that the ground field is not $F$-finite.
\end{remark}

\section{Acknowledgements}
The question of when valuation rings are Frobenius split was posed to the second author by Karl Schwede and Zsolt Patakfalvi. We thank them, especially Karl Schwede, for helpful discussions over the years and their interest. The first author proposed the problem of proving that Abhyankar valuations are the only valuations of function fields of $F$-finite surfaces that have Frobenius split valuation rings in the 2022 Michigan Research Experience for Graduates (MREG) for a group of four graduate students -- Sandra Nair, Sandra Sandoval-G{\'o}mez, Dawei Shen and Ying Wang. The first author thanks the organizers of MREG for creating a stimulating virtual environment for the various research groups. After the conclusion of the week-long program, the first author, Nair, Sandoval-G{\'o}mez and Wang continued working on the surface case of the problem without success. The first author would consequently like to express his gratitude to the three of them for thinking about this problem. Both authors would also like to thank them for giving us permission to publish this paper, which relies on methods different from the ones that were discussed during and after MREG with Nair, Sandoval-G{\'o}mez and Wang. The first author was partially supported by NSF-DMS grant number 2502333 and Simons Foundation grant number MP-TSM-00002400. The second author was partially supported by NSF-DMS grant numbers 2101075 and 2200501.

\bibliographystyle{skalpha}
\bibliography{main,preprints}

@preamble{"\def\cfudot#1{\ifmmode\setbox7\hbox{$\accent"5E#1$}\else \setbox7\hbox{\accent"5E#1}\penalty 10000\relax\fi\raise 1\ht7 \hbox{\raise.1ex\hbox to 1\wd7{\hss.\hss}}\penalty 10000 \hskip-1\wd7\penalty 10000\box7} "}

@incollection{VaquieValuations,
    author    = {Vaqui{\'e}, Michel},
    title     = {Valuations and local uniformization},
    booktitle = {Singularity Theory and Its Applications},
    series    = {Advanced Studies in Pure Mathematics},
    volume    = {43},
    pages     = {477--527},
    publisher = {Mathematical Society of Japan},
    address   = {Tokyo},
    year      = {2006},
    doi       = {10.2969/aspm/04310477},
    mrnumber  = {MR2325152},
}

@article {JonssonMustataValuations,
    AUTHOR = {Jonsson, Mattias and Musta{\c{t}}{\u{a}}, Mircea},
     TITLE = {Valuations and asymptotic invariants for sequences of ideals},
   JOURNAL = {Ann. Inst. Fourier (Grenoble)},
  FJOURNAL = {Annales de l'Institut Fourier},
    VOLUME = {62},
      YEAR = {2012},
    NUMBER = {6},
     PAGES = {2145--2209},
  MRNUMBER = {MR3060755},
       DOI = {10.5802/aif.2746},
       URL = {https://doi.org/10.5802/aif.2746},
}

@article {AbhyankarLocalDomain,
    AUTHOR = {Abhyankar, Shreeram},
     TITLE = {On the valuations centered in a local domain},
   JOURNAL = {Amer. J. Math.},
  FJOURNAL = {American Journal of Mathematics},
    VOLUME = {78},
      YEAR = {1956},
    NUMBER = {2},
     PAGES = {321--348},
  MRNUMBER = {MR0082477},
       DOI = {10.2307/2372519},
       URL = {https://doi.org/10.2307/2372519},
}

@article {CutkoskyAbhyankar,
    AUTHOR = {Cutkosky, Steven Dale},
     TITLE = {Local uniformization of {A}bhyankar valuations},
   JOURNAL = {Michigan Math. J.},
  FJOURNAL = {Michigan Mathematical Journal},
    VOLUME = {71},
      YEAR = {2022},
    NUMBER = {4},
     PAGES = {859--891},
      ISSN = {0026-2285,1945-2365},
   MRCLASS = {13A18 (12J10)},
  MRNUMBER = {MR4505369},
       DOI = {10.1307/mmj/20205888},
       URL = {https://doi.org/10.1307/mmj/20205888},
}

@phdthesis {DattaThesis,
    AUTHOR = {Datta, Rankeya},
     TITLE = {A tale of valuation rings in prime characteristic},
    SCHOOL = {University of Michigan},
      YEAR = {2018},
       URL = {https://hdl.handle.net/2027.42/146066},
}

@article {DattaFrobeniusSplittingValuations,
    AUTHOR = {Datta, Rankeya},
     TITLE = {Frobenius splitting of valuation rings and {$F$}-singularities of centers},
   JOURNAL = {Algebra Number Theory},
    VOLUME = {15},
      YEAR = {2021},
    NUMBER = {10},
     PAGES = {2485--2512},
  MRNUMBER = {MR4377857},
       DOI = {10.2140/ant.2021.15.2485},
       URL = {https://doi.org/10.2140/ant.2021.15.2485},
}

@incollection {DattaSmithExcellence,
    AUTHOR = {Datta, Rankeya and Smith, Karen E.},
     TITLE = {Excellence in prime characteristic},
 BOOKTITLE = {Local and global methods in algebraic geometry},
    SERIES = {Contemp. Math.},
    VOLUME = {712},
     PAGES = {105--116},
 PUBLISHER = {Amer. Math. Soc., [Providence], RI},
      YEAR = {[2018] \copyright 2018},
   MRCLASS = {13A35},
  MRNUMBER = {3832401},
MRREVIEWER = {Yongwei Yao},
       DOI = {10.1090/conm/712/14344},
       URL = {https://doi-org.proxy.cc.uic.edu/10.1090/conm/712/14344},
}

@article{DattaTuckerSplinterLoci,
    AUTHOR = {Datta, Rankeya and Tucker, Kevin},
     TITLE = {Openness of splinter loci in prime characteristic},
   JOURNAL = {J. Algebra},
  FJOURNAL = {Journal of Algebra},
    VOLUME = {629},
      YEAR = {2023},
     PAGES = {307--357},
  MRNUMBER = {MR4583731},
       DOI = {10.1016/j.jalgebra.2023.03.025},
       URL = {https://doi.org/10.1016/j.jalgebra.2023.03.025},
}

@article {KnafKuhlmannAbhyankar,
    AUTHOR = {Knaf, Hagen and Kuhlmann, Franz-Viktor},
     TITLE = {Abhyankar places admit local uniformization in any
              characteristic},
   JOURNAL = {Ann. Sci. \'{E}cole Norm. Sup. (4)},
  FJOURNAL = {Annales Scientifiques de l'\'{E}cole Normale Sup\'{e}rieure. Quatri\`eme
              S\'{e}rie},
    VOLUME = {38},
      YEAR = {2005},
    NUMBER = {6},
     PAGES = {833--846},
      ISSN = {0012-9593},
   MRCLASS = {13A18 (12J10)},
  MRNUMBER = {MR2216832},
MRREVIEWER = {Vasil\cprime  \={I}. Andr\={\i}\u{\i}chuk},
       DOI = {10.1016/j.ansens.2005.09.001},
       URL = {https://doi-org.proxy2.cl.msu.edu/10.1016/j.ansens.2005.09.001},
}

@article{DattaMurayamaFsolidity,
    author = {Datta, Rankeya and Murayama, Takumi},
    note = {(with an appendix by Karen E. Smith) arXiv:2007.10383},
    title = {Excellence, {$F$}-singularities, and solidity},
    year = {2020}
}

@article {DattaSmithValuations,
    AUTHOR = {Datta, Rankeya and Smith, Karen E.},
     TITLE = {Frobenius and valuation rings},
   JOURNAL = {Algebra Number Theory},
  FJOURNAL = {Algebra \& Number Theory},
    VOLUME = {10},
      YEAR = {2016},
    NUMBER = {5},
     PAGES = {1057--1090},
      ISSN = {1937-0652},
   MRCLASS = {13A35 (13F30 14B05)},
  MRNUMBER = {MR3531362},
MRREVIEWER = {Ana Bravo},
       DOI = {10.2140/ant.2016.10.1057},
       URL = {https://doi-org.proxy2.cl.msu.edu/10.2140/ant.2016.10.1057},
}

@book{BourbakiCA,
	Address = {Berlin},
	Author = {Bourbaki, Nicolas},
	Isbn = {3-540-64239-0},
	Mrclass = {13-XX (00A05)},
	Mrnumber = {MR1727221},
	Note = {Translated from the French, Reprint of the 1989 English translation},
	Pages = {xxiv+625},
	Publisher = {Springer-Verlag},
	Series = {Elements of Mathematics (Berlin)},
	Title = {Commutative algebra. {C}hapters 1--7},
	Year = {1998}
}

@article{KunzOnNoetherianRingsOfCharP,
	Author = {Kunz, Ernst},
	Fjournal = {American Journal of Mathematics},
	Issn = {0002-9327},
	Journal = {Amer. J. Math.},
	Mrclass = {13E05 (14E15)},
	Mrnumber = {MR0432625 (55 \#5612)},
	Mrreviewer = {Hamet Seydi},
	Number = {4},
	Pages = {999--1013},
	Title = {On {N}oetherian rings of characteristic {$p$}},
	Volume = {98},
	Year = {1976}
}

@article {DattaSmithValuationsErratum,
    AUTHOR = {Datta, Rankeya and Smith, Karen E.},
     TITLE = {Correction to the article ``{F}robenius and valuation rings''
              [ {MR}3531362]},
   JOURNAL = {Algebra Number Theory},
  FJOURNAL = {Algebra \& Number Theory},
    VOLUME = {11},
      YEAR = {2017},
    NUMBER = {4},
     PAGES = {1003--1007},
      ISSN = {1937-0652},
   MRCLASS = {13A35 (13F30 14B05)},
  MRNUMBER = {3665644},
       DOI = {10.2140/ant.2017.11.1003},
       URL = {https://doi-org.proxy2.cl.msu.edu/10.2140/ant.2017.11.1003},
}

@article {XuQuasiMonomial,
    AUTHOR = {Xu, Chenyang},
     TITLE = {A minimizing valuation is quasi-monomial},
   JOURNAL = {Ann. of Math. (2)},
  FJOURNAL = {Annals of Mathematics. Second Series},
    VOLUME = {191},
      YEAR = {2020},
    NUMBER = {3},
     PAGES = {1003--1030},
      ISSN = {0003-486X},
   MRCLASS = {14E30 (14J17 14J45)},
  MRNUMBER = {4088355},
MRREVIEWER = {Yuchen Liu},
       DOI = {10.4007/annals.2020.191.3.6},
       URL = {https://doi.org/10.4007/annals.2020.191.3.6},
}

@incollection {KuhlmannDefect,
    AUTHOR = {Kuhlmann, Franz-Viktor},
     TITLE = {The defect},
 BOOKTITLE = {Commutative algebra---{N}oetherian and non-{N}oetherian
              perspectives},
     PAGES = {277--318},
 PUBLISHER = {Springer, New York},
      YEAR = {2011},
   MRCLASS = {13A18},
  MRNUMBER = {2762515},
MRREVIEWER = {Carlos Galindo},
       DOI = {10.1007/978-1-4419-6990-3\_11},
       URL = {https://doi-org.proxy2.cl.msu.edu/10.1007/978-1-4419-6990-3_11},
}

@misc{stacks-project,
	Author = {The {Stacks Project Authors}},
	Howpublished = {\url{http://stacks.math.columbia.edu}},
	Shorthand = {Sta},
	Title = {{\itshape Stacks Project}}
}

\end{document}